\newtheorem{theorem}{Theorem}[section]
\newtheorem{lemma}[theorem]{Lemma}
\newtheorem{corollary}[theorem]{Corollary}
\newtheorem{conjecture}[theorem]{Conjecture}
\newtheorem{proposition}[theorem]{Proposition}
\newtheorem{problem}[theorem]{Problem}
\theoremstyle{definition}
\newtheorem{definition}[theorem]{Definition}
\newtheorem{remark}[theorem]{Remark}
\numberwithin{equation}{section}
\def \bN {\mathbb N}
\def \bQ {\mathbb Q}
\def \bR {\mathbb R}
\def \bZ {\mathbb Z}
\def\bfa{{\mathbf a}}
\def\bfc{{\mathbf c}}
\def\bff{{\mathbf f}}
\def\bfh{{\mathbf h}}
\def\bfj{{\mathbf j}}
\def\bfp{{\mathbf p}}
\def\bft{{\mathbf t}}
\def\bfv{{\mathbf v}}
\def\bfx{{\mathbf x}}
\def\bfy{{\mathbf y}}
\def\bfz{{\mathbf z}}
\def\bfF{{\mathbf F}}
\def \sA {\mathscr A}
\def \sB {\mathscr B}
\def \sU {\mathscr U}
\def \sX {\mathscr X}
\def \rd {\mathrm d}
\def\cH{{\mathcal H}}
\def\calM{{\mathcal M}}
\def \fB {\mathfrak B}
\def \fC {\mathfrak C}
\def \fM {\mathfrak M}
\def \fS {\mathfrak S}
\def \cH {\mathcal H}
\def \cI {\mathcal I}
\def \cJ {\mathcal J}
\def \cM {\mathcal M}
\def \cR {\mathcal R}
\def \cS {\mathcal S}
\def \cU {\mathcal U}
\def \cV {\mathcal V}
\def \bzero {\mathbf 0}
\def \balp {{\boldsymbol{\alp}}}
\def\C{{\mathbb C}}\def\N{{\mathbb N}}
\def\R{{\mathbb R}}
\def\Z{{\mathbb Z}}\def\Q{{\mathbb Q}}
\def\bump{{\textcolor{black} {w}}}
\def \supp {{\mathrm{supp}}}
\def \ds1 {\mathds{1}}
\def\alp{{\alpha}}
\def\del{{\delta}}
\def\ome{{\omega}} 
\def\d{{\partial}}
\def\implies{\Rightarrow}
\def\le{\leqslant} \def\ge{\geqslant}
\def\d{{\,{\rm d}}}
\def\d{{\mathrm{d}}}
\def\wgood{{\Omega_{\bfp, \mathrm{good} }}}
\def\wbad{{\Omega_{\bfp, \mathrm{sub} }}}
\DeclareMathOperator{\dist}{dist}
\DeclareMathOperator{\codimension}{codim}
\newcounter{@ToDo}
\newcommand{\todo@helper}[1]{%
	({\color{blue}TODO~\arabic{@ToDo}: {#1\@addpunct{.}}})%
}
\newcommand{\todo}[1]{\stepcounter{@ToDo}%
	\relax\ifmmode\text{\todo@helper{#1}}%
	\else\todo@helper{#1}\fi%
}
\newcounter{@cdo}
\newcommand{\cdo@helper}[1]{%
	({\color{red}CITE~\arabic{@cdo}: {#1\@addpunct{.}}})%
}
\newcommand{\cdo}[1]{\stepcounter{@cdo}%
	\relax\ifmmode\text{\cdo@helper{#1}}%
	\else\cdo@helper{#1}\fi%
}
\author{Damaris Schindler}
\author{Rajula Srivastava}
\author{Niclas Technau}
\begin{document}

\subjclass[2020]{11J83; 11K55; 11J25; 42B20}
\keywords{Rational points near manifolds, Diophantine approximation on manifolds, quantitative non-divergence, oscillatory integrals, Hausdorff dimension, spectrum of exponents.}

\address{Damaris Schindler; 
\newline G\"{o}ttingen University, Bunsenstrasse 3--5, 37073 G\"{o}ttingen, Germany.}
\email{damaris.schindler@mathematik.uni-goettingen.de}

\address{Rajula Srivastava; 
\newline Mathematical Institute, University of Bonn, Endenicher Allee 60,
53115, Bonn, Germany, and
\newline Max Planck Institute for Mathematics, Vivatsgasse 7,
53111, Bonn,
Germany.}
\email{rajulas@math.uni-bonn.de}

\address{Niclas Technau; 
\newline 
Graz University of Technology, 
Institute of Analysis and Number Theory, Steyrergasse 30/II, 8010 Graz, Austria}
\email{ntechnau@tugraz.at}

\title[Rationals Near Manifolds, Dynamics, and Oscillatory Integrals]
{Rational Points Near Manifolds, Homogeneous Dynamics,
and Oscillatory Integrals}
\date{\today}

\begin{abstract}
Let $\cM\subset \bR^n$ be a 
compact and sufficiently 
smooth manifold of dimension $d$.
Suppose $\cM$ is nowhere completely flat.
Let $N_\cM(\delta,Q)$ denote 
the number of rational vectors $\bfa/q$
within a distance of $\delta/q$ 
from $\cM$ so that $q \in [Q,2Q)$. 
We develop a novel method to analyse $N_\cM(\delta,Q)$.
The salient feature of our technique is the combination of powerful quantitative non-divergence estimates, in a form 
due to Bernik, Kleinbock, and Margulis \cite{BKM},
with Fourier analytic tools. The second ingredient enables us to eschew the Dani correspondence and an explicit use of the geometry of numbers.

We employ this new method to address in a strong sense
a problem of Beresnevich \cite{Bers12}
regarding lower bounds on $N_\cM(\delta,Q)$
for non-analytic manifolds. 
Additionally, 
we obtain asymptotic formulae 
which are the first of their kind for such a general class of manifolds.
As a by-product, we improve upon upper bounds 
on $N_\cM(\delta,Q)$ from
a recent breakthrough of Beresnevich and Yang \cite{BY} and recover their convergence Khintchine type theorem for arbitrary nondegenerate submanifolds. Moreover, we obtain new Hausdorff dimension and measure refinements for the set of well-approximable points for a range of Diophantine exponents close to $1/n$. 
\end{abstract}

\maketitle

\section{Introduction}
The main goal of this article is 
to quantitatively study the distribution
of rational points close to manifolds. 
Given a compact manifold $\cM\subset \bR^n$, 
as one approach to measure the density of 
rational points close to $\cM$, 
we may ask for the number of 
rational points with bounded denominator, 
which lie close to $\cM$. 
To make this more precise, 
we define the following counting function. 
Let $Q\in \bN$, $0<\delta< 1/2$, 
and let $\dist$ denote a fixed distance 
function on $\bR^n$. Define the counting function
$$N_\cM(\delta, Q)=\sharp\{\mathbf{p}/q\in \mathbb{Q}^n:\ 1\leq q\leq Q,\ \mathbf{p}\in \bZ^n,\  \dist(\mathbf{p}/q,\cM)\leq \delta/q   \}.$$
Understanding the growth of the counting function $N_\cM(\delta, Q)$ in terms of asymptotics, upper and lower bounds, is a fundamental question which is of central importance in different fields such as Diophantine approximation and Diophantine geometry (see \S \ref{subsec intro app} for a more detailed discussion).\par
In this article we present a novel approach towards estimating the counting function $N_\cM(\delta, Q)$, which combines Fourier analytic tools with a deep result of Bernik, Kleinbock and Margulis \cite{BKM}. With this we recover the main theorem of a recent breakthrough by Beresnevich and Yang \cite{BY} on  a convergence Khintchine type theorem for arbitrary nondegenerate submanifolds of $\mathbb{R}^n$. Moreover, we improve on their results in terms of Hausdorff $s$-measure refinements and with this we obtain the exact value of the Hausdorff dimension of $\tau$-well approximable points lying on a nondegenerate submanifold for a larger range of values of $\tau$. In contrast to \cite{BY} we even find asymptotic formulas for the counting function $N_\cM(\delta, Q)$, as soon as $\delta$ is sufficiently large and the manifold $\cM$ nondegenerate.\par
In another direction, we obtain lower bounds of the expected order of magnitude for $N_\cM(\delta, Q)$ for smooth nondegenerate submanifolds as soon as $\delta \gg Q^{-\frac{3}{2n-1} + \varepsilon}$. This improves on seminal work of Beresnevich \cite{Bers12} in those cases where the codimension $m$ of the submanifold satisfies $m>\frac{2n-1}{3}$ and where the dimension of the submanifold is at least $2$. Moreover, Beresnevich \cite{Bers12} needs to assume the submanifold to be analytic, whereas a certain level of smoothness is sufficient for our strategy.\par

In the following we work with $l$-nondegenerate manifolds for some natural number $l$, where we use a definition of nondegeneracy as in work of Kleinbock and Margulis \cite{KM 1998}.
\begin{definition}
We say that a $d$-dimensional manifold, which is locally given by an $l$-times continuously differentiable map $\mathbf{f}: \mathbb{R}^d 
\supset \sU \rightarrow \R^n$, with $\sU$ open, is $l$-nondegenerate at a point $\mathbf{x}_0\in \sU$ if the partial derivatives of $\mathbf{f}$ of order up to $l$ in the point $\mathbf{x}_0$ span $\R^n$. Moreover, we say that the manifold is $l$-nondegenerate, if it is $l$-nondegenerate at almost every point $\mathbf{x}_0\in \sU$, with respect to the $d$-dimensional Lebesgue measure.
\end{definition}

Given the counting function $N_\cM(\delta, Q)$, one may first ask the basic question of its growth as $Q$ and $\delta$ vary. If $\cM$ is of dimension $d$, then a first upper bound is given by
$$N_\cM(\delta, Q)\ll Q^{d+1},$$
which is in fact sharp for the case where $\cM$ is a piece of a rational linear subspace of $\bR^n$. If we write $m=n-d$ for the codimension of $\cM$, then a probabilistic heuristic suggests the growth order
\begin{equation}\label{eqnheuristic}
N_\cM(\delta, Q) \asymp \delta^m Q^{d+1}.
\end{equation}
We cannot expect this heuristic to hold for all manifolds or all values of $\delta$, as examples like linear subspaces or the Fermat curve or any manifold given as a piece of a variety with many rational points show. For a more detailed discussion on this, see for example \cite{Bers12} or \cite{Huang20}. However, if we assume in some sense that $\cM$ is not flat and $\delta$ is not too small, then one may conjecture that the probabilistic expectation in \eqref{eqnheuristic} gives the correct order of magnitude. In \cite[Conjecture 3.1]{HuangE} Huang has posed the following main conjecture.

\begin{conjecture}
\label{conj Huang}
Let $\cM$ be a compact $l$-nondegenerate submanifold in $\bR^n$ of dimension $d$ and codimension $m$ with $l=m+1$. If $\delta \geq Q^{-\frac{1}{m}+\epsilon}$ for some $\epsilon >0$ and $Q\rightarrow \infty$, then
$$N_\cM (\delta, Q) \sim c_{\cM} \delta^m Q^{d+1},$$
where $c_{\cM}$ is a constant only depending on $\cM$.
\end{conjecture}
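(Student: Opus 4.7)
The plan is to combine a Fourier-analytic smoothing of $N_\cM(\delta,Q)$ with the quantitative non-divergence estimate of Bernik--Kleinbock--Margulis \cite{BKM}, eschewing the Dani correspondence as advertised in the abstract. First, I would partition $\cM$ into finitely many coordinate patches and, after reordering coordinates, write each as a graph $(\bfx,\mathbf{f}(\bfx))$ with $\bfx\in\sU\subset\bR^d$ and $\mathbf{f}\colon\sU\to\bR^m$ of class $C^{m+1}$ and $(m+1)$-nondegenerate almost everywhere. A rational point $(\bfa/q,\bfb/q)$ lies within $\delta/q$ of this piece iff $\|q\mathbf{f}(\bfa/q)-\bfb\|\le\delta+O(\delta^2/q)$; sandwiching $\ds1_{[-\delta,\delta]^m}$ between smooth bumps $w^{\pm}$ with $\hat w^{\pm}(\bfh)\ll\delta^m(1+\delta|\bfh|)^{-N}$ and applying Poisson summation in $\bfb$ converts the (dyadic) count to
\begin{equation*}
\sum_{q\in[Q,2Q)}\sum_{\bfh\in\bZ^m}\hat w^{\pm}(\bfh)\sum_{\bfa\in\bZ^d}\rho(\bfa/q)\,\re\!\bigl(q\,\bfh\cdot\mathbf{f}(\bfa/q)\bigr),
\end{equation*}
where $\rho$ is a smooth cutoff adapted to $\sU$. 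The term $\bfh=\mathbf{0}$ is a Riemann sum whose Euler--Maclaurin evaluation produces the expected main term $c_\cM\delta^m Q^{d+1}$, with $c_\cM$ an explicit integral over $\cM$.

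For $\bfh\neq\mathbf{0}$, I would apply Poisson in $\bfa$ and change variables to rewrite the inner sum as
\begin{equation*}
q^d\sum_{\bfk\in\bZ^d}\int_{\bR^d}\rho(\bfy)\,\re\!\bigl(q\bigl(\bfh\cdot\mathbf{f}(\bfy)-\bfk\cdot\bfy\bigr)\bigr)\,\d\bfy.
\end{equation*}
Stationary phase combined with $(m+1)$-nondegeneracy yields fast decay of this oscillatory integral when the phase $\Phi(\bfy)=\bfh\cdot\mathbf{f}(\bfy)-\bfk\cdot\bfy$ has no critical point on $\supp\rho$, while at a non-degenerate critical point the contribution is $\ll q^d(q|\bfh|)^{-d/2}$ up to lower-order factors. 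The rapid decay of $\hat w^{\pm}$ confines $|\bfh|\ll\delta^{-1}Q^{\eps}$, and stationary phase confines the relevant $\bfk$ to a thin tube about the image of $\bfh$ under the Gauss-type map $\bfy\mapsto\bfh\cdot D\mathbf{f}(\bfy)$.

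The count of $(\bfh,\bfk)$ producing a non-negligible oscillatory contribution is precisely what \cite{BKM} controls: the non-divergence estimate bounds the Lebesgue measure of those $\bfy\in\sU$ at which both $\Phi(\bfy)$ and $\nabla\Phi(\bfy)$ are simultaneously unusually small, which is exactly the condition forcing the stationary-phase regime. Dyadically decomposing in $|\bfh|$ and $|\bfk|$, translating these measure bounds into pointwise counts, weighting against $\hat w^{\pm}$, and summing over $q\in[Q,2Q)$ should produce a total error of the form $O\bigl(\delta^m Q^{d+1-\eta(\eps)}\bigr)$ precisely when $\delta\ge Q^{-1/m+\eps}$, delivering the asymptotic.

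The hardest step, and the one I expect to absorb most of the technical work, is the transition from the BKM measure statement on parameter space to a genuine arithmetic count of contributing $(\bfh,\bfk)$ that is uniform across all dyadic scales without introducing a loss which would force $\delta$ above the threshold $Q^{-1/m+\eps}$. Bypassing the Dani correspondence makes this especially delicate, since BKM is traditionally digested through it; here the Fourier expansion fixes a very specific shape for the lattices whose non-divergence must be tested, and matching the hypothesis of \cite{BKM} to that shape---together with a careful analysis of borderline cases where the phase has critical points just outside $\supp\rho$ and a clean Euler--Maclaurin analysis of the main term robust to non-analytic $\mathbf{f}$---is the technical heart of the argument.
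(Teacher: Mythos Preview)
The statement you are attempting to prove is Conjecture~\ref{conj Huang}, which the paper records as an \emph{open conjecture} due to Huang and does not prove. The paper's asymptotic result, Theorem~\ref{thm main asymp bounds}, establishes $N_\cM(\delta,Q)\sim c_\bft\delta^mQ^{d+1}$ only in the far narrower range $\delta \ge Q^{\frac{-3+(2n+10)\eta}{2md(2l-1)(n+1)+2n-1}}$, and the authors explicitly note just afterwards that ``in comparison to Conjecture~\ref{conj Huang}, we only obtain here results for a smaller range of $\delta$.'' So there is no proof in the paper to compare against; you are proposing to settle an open problem.

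Your outline is in fact close in spirit to what the paper does for its weaker Theorem~\ref{thm main asymp bounds}: smooth the count, Poisson-sum in the codimension and then in the base variables, isolate the zero-frequency main term, and control the oscillatory remainder using the Bernik--Kleinbock--Margulis sub-level estimate. The genuine gap is your assertion that the summed error is $O(\delta^m Q^{d+1-\eta(\eps)})$ ``precisely when $\delta\ge Q^{-1/m+\eps}$.'' Two things block this. First, the BKM theorem (Theorem~\ref{thm quant non div} here) bounds the bad sub-level set only by $(\Delta K T^{n-1})^{\alpha}$ with $\alpha=\frac{1}{d(2l-1)(n+1)}$; this exponent is exactly what produces the restricted $\delta$-range in Theorem~\ref{thm main asymp bounds}, and your sketch offers no mechanism to improve it. Second, your stationary-phase step claiming decay $\ll q^d(q|\bfh|)^{-d/2}$ at ``non-degenerate critical points'' is unjustified: $(m{+}1)$-nondegeneracy in the Kleinbock--Margulis sense does \emph{not} force the phase Hessian $\nabla^2_{\bfx}(\bfh\cdot\mathbf{f})$ to be non-singular---that is a Gaussian-curvature hypothesis of the type used in \cite{Huang20,SY22}, which Conjecture~\ref{conj Huang} deliberately avoids. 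Without that pointwise decay the oscillatory side of your argument yields no saving beyond what the measure bound on the sub-level set already gives, and you are left with precisely the obstruction the paper faces.
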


In particular, this conjecture implies the upper bound
$$N_\cM(\delta, Q) \ll \delta^m Q^{d+1} + Q^{d+\epsilon},$$
which is conjectured to hold for all $\delta$ and $Q$ as soon as $\cM$ satisfies suitable nondegeneracy conditions.\par

Establishing bounds and asymptotics for the counting function $N_\cM(\delta,Q)$ has been an active area of research in the last couple of decades. First progress has been achieved for the case of planar curves with non-vanishing Gaussian curvature, a near optimal upper bound for $C^2$ curves was established by Huxley \cite{Hux94}, the expected upper bound for $C^3$ curves by Vaughan and Velani \cite{Vaughan-Velani-2007}, a lower bound of the expected order of magnitude for $\del\gg Q^{-1}$ and $C^3$ curves by Beresnevich, Dickinson and Velani \cite{BDV07}, and the main conjecture for $C^3$ curves by Huang \cite{HuangRPAdv}.\par
For the case of general manifolds, 
Beresnevich obtained in his seminal article \cite{Bers12} the expected lower bound 
for $N_\cM(\delta, Q)$ where $\del \gg Q^{-\frac{1}{m}}$, assuming that $\cM$ is 
an analytic submanifold nondegenerate in one point. An upper bound, which is 
close to the one predicted by the main conjecture, has been obtained by Beresnevich, 
Vaughan, Velani and Zorin \cite{BVVZ17} for hypersurfaces of nonzero Gaussian curvature. Finally, using an approach based on harmonic analysis and an elegant duality principle, 
J.-J. Huang \cite{Huang20} managed to show the main conjecture for smooth compact hypersurfaces 
with Gaussian curvature bounded away from zero. 
The ideas of Huang have been further developed 
and used for manifolds of higher codimension 
under suitable curvature conditions by 
the first author and Yamagishi \cite{SY22}, 
and for hypersurfaces parametrised 
by homogeneous functions with 
non-vanishing curvatures away from 
the origin by the second 
and third author \cite{ST}.\par
In the case of smooth nondegenerate manifolds, Beresnevich and Yang \cite[Thorem 1.3]{BY} 
establish upper bounds of the expected 
order of magnitude outside of a small set
of the manifold. It is this line of attack 
that we combine with Fourier analytic methods 
and additional lattice point counting arguments, 
and that we push to upper bounds 
for any smooth nondegenerate manifold, 
and moreover asymptotics 
and lower bounds for certain ranges of 
$\delta$ and $Q$. 

For a lucid exposition and 
to state our results, 
we use the following (usual) conventions.\\
\\
{\emph{Notation:}}
Let $\sX\subset \R^d$, and let 
$f,g:\sX \rightarrow \C$ be functions.
The Vinogradov notation $f \ll g$ means that there is some constant
$C>0$ so that the functions 
satisfy $\vert f(\bfx) \vert \leq C \vert g(\bfx)\vert$ 
for all $\bfx\in \sX$. In the following 
$\Vert x \Vert:=\mathrm{dist}(x,\bZ)$ 
denotes the distance of $x$ to the nearest integer
and should not be confused with an euclidean $p$-norm
of $\bfx \in \R^d$ which we denote by
$\Vert \bfx \Vert_p$, whenever 
$1\leq p\leq \infty$. Unless otherwise specified, 
the components of a vector, say, $\bfx\in \R^d$ 
shall be given by an appropriate subscript, i.e. $\bfx$
has the components $x_1,\ldots,x_d$. Furthermore, 
we use multi-index notation in the form 
that for $\balp \in \R^d_{>0}$, we define
$\bfx^{\balp}:=x_1^{\alpha_1}\ldots x_d^{\alpha_d}$.
The Lebesgue measure on $\R^k$ is abbreviated by $\mu_k$ for $k\in \mathbb{N}$ and we write $\langle \cdot, \cdot \rangle$ for the standard scalar product on $\R^k$. Throughout, the open ball (in $\R^d$) with radius $r$
and centre $\bfc$ is denoted by 
$$
\sB(\bfc,r):=
\{\bfx \in \R^d: \Vert \bfx - \bfc \Vert_2 < r \}.
$$
The unit ball $\sB(\bzero,1)$ shall be denoted by $\sU_d$. Fix Schwartz functions 
$\Omega: \bR^d \rightarrow [0,1]$,
$\ome: \bR \rightarrow [0,1]$, and
$w: \bR \rightarrow [0,1]$. 
To avoid technical nuisances, 
we call a triple $\bft:=(\Omega, \omega, w)$ 
of such functions an
{\emph{admissible weight tuple}}, if
\begin{itemize}
\item the support of $\Omega$ is contained in $\sB(\mathbf{0},1/2)$ and
    \item 
    the support of $\ome$ is contained in $[1/2,1]$ and
    \item the support of
    $w$ is contained in 
    $(-1,1)$ and $w(x)=w(-x)$ for all $x\in \mathbb{R}$.

\end{itemize}
For a Lebesgue measurable set $A\subseteq \mathbb{R}^n$, $\dim (A)$ and $\cH^s (A)$ shall denote the Hausdorff dimension and the $s$--dimensional Hausdorff measure, respectively.

\subsection{Main Results}
Let $\mathcal{M} \subset \mathbb{R}^n$, where $n\geq 2$, 
be a bounded submanifold. For the questions above, 
it is sufficient to work locally. 
Hence by the Implicit Function Theorem 
(and possibly translating and re-scaling), 
there is no loss of generality in assuming that
$\mathcal{M}$ is in a normalised Monge form, that is
\begin{equation}\label{eq normal Monge}
\mathcal{M}=\{(\bfx,\bfF(\bfx)), \bfx\in 
\sU_d\}.    
\end{equation}
We stress that the function $$
\bfF: \sU_d \rightarrow  \mathbb{R}^m, 
\quad \bfF(\bfx)=(F_1(\bfx),\ldots, F_m(\bfx))
$$ is frequently used throughout the manuscript. 
We say that $\mathbf{F}$ is $l$-nondegenerate 
if the manifold $\calM$, as given above, is $l$-nondegenerate.\par
Let $Q\in \mathbb{N}$, $\del\in (0,1/2)$ and $\bft:=(\Omega,\omega,w)$ be an admissible weight tuple.
In this manuscript, we shall 
bound the number of rational points close to $\cM$
by studying the smooth counting function
\begin{equation}\label{def counting func}
N_\bfF^{\bft}(\delta,Q) :=
\sum_{(q,\bfa)\in \Z^{d+1}} 
\Omega\left(\frac{\bfa}{q}\right) 
\ome\left(\frac{q}{Q}\right)
\prod_{1\leq i\leq m} 
w\bigg(
\frac{\|q F_i (\frac{\bfa}{q}) \|}{\delta}
\bigg).
\end{equation}

The following statements are our main results. 
\begin{theorem}[Asymptotics]\label{thm main asymp bounds}
Let $0<\eta\leq \frac{1}{2n+10}$. Suppose $\bfF$ is $l$-nondegenerate, and at least 
$\lceil \frac{n+2}{\eta}\rceil$ times continuously differentiable. Assume that $Q$ is sufficiently large, depending on $\bfF$.
For an admissible weight tuple
$\bft =(\Omega,\omega,w)$, define the constant
\begin{equation}\label{def leading constant}
c_{\bft} 
:= \int_{\R^d} \Omega(\bfx) \rd \bfx
\cdot 
\int_{\R} y^d\omega(y) \rd y
\cdot \Big(\int_\R w(z) \rd z\Big)^m.
\end{equation}
Assume that
\begin{equation}\label{range delta}
Q^{\frac{-3+(2n+10)\eta}{2md(2l-1)(n+1)+2n-1}} \leq \delta \leq \frac{1}{2}.
\end{equation}
Then 
$$
N_{\bfF}^{\bft}(\delta,Q)
 =c_\bft \delta^m Q^{d+1} +O\left( Q^{d+1}(\delta^{-(n-\frac{1}{2})}
Q^{-\frac{3}{2}+ (n+5)\eta} )^{\frac{1}{d(2l-1)(n+1)}} \right), $$
where the implied constant is uniform 
for $\delta$ in the range \eqref{range delta}.\par
In particular, if we assume that 
$$ 
Q^{\frac{-3+(2n+10)\eta}{2md(2l-1)(n+1)+2n-1}}
Q^{\frac{d(2l-1)(n+1)\epsilon}{ (n-1/2)}}\leq 
\delta \leq \frac{1}{2},\qquad 
0<\eta \leq \frac{3}{2(n+5)}
$$
for some $\epsilon>0$, then
$$
N_{\bfF}^{\bft}(\delta,Q) =
[c_\bft + O(Q^{-\epsilon})] 
\delta^m Q^{d+1}.
$$
\end{theorem}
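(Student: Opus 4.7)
The strategy combines Fourier analysis of the smooth weights with the Bernik--Kleinbock--Margulis quantitative non-divergence theorem: a Fourier decomposition turns the counting function into a sum over dual frequencies whose zero term yields the main term, while the non-zero frequencies are controlled by oscillatory integral bounds together with a non-divergence cut-off.

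\emph{Fourier decomposition and main term.} Since $w$ is even and Schwartz, the $1$-periodic function $y\mapsto w(\|y\|/\delta)$ has the Fourier expansion $\sum_{k\in\Z}\delta\widehat{w}(\delta k)\,e(ky)$. Applying this in each of the $m$ coordinates yields
$$N_{\bfF}^{\bft}(\delta,Q)=\delta^{m}\sum_{\bfk\in\Z^{m}}\prod_{i=1}^{m}\widehat{w}(\delta k_{i})\cdot S(\bfk),$$
where $S(\bfk):=\sum_{(q,\bfa)\in\Z^{d+1}}\Omega(\bfa/q)\,\omega(q/Q)\,e(q\langle\bfk,\bfF(\bfa/q)\rangle)$. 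For $\bfk=\bzero$, Poisson summation in $\bfa$ together with the Schwartz decay of $\Omega$ gives $\sum_{\bfa\in\Z^{d}}\Omega(\bfa/q)=q^{d}\int\Omega\,\rd\bfx+O_{N}(q^{-N})$, and summing against $\omega(q/Q)$ yields $S(\bzero)=Q^{d+1}\bigl(\int\Omega\bigr)\bigl(\int y^{d}\omega(y)\,\rd y\bigr)+O(Q^{d})$. Multiplying by $\delta^{m}\widehat{w}(0)^{m}=\delta^{m}\bigl(\int w\bigr)^{m}$ reproduces the announced main term $c_{\bft}\delta^{m}Q^{d+1}$.

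\emph{Non-trivial frequencies.} High frequencies $|\bfk|_{\infty}\gg\delta^{-1}Q^{\eta}$ are killed by the rapid decay of $\widehat{w}$ combined with the trivial $|S(\bfk)|\ll Q^{d+1}$. In the remaining range $\bzero\ne\bfk$, $|\bfk|_{\infty}\ll\delta^{-1}Q^{\eta}$, Poisson summation in $\bfa$ (for each fixed $q$) converts the inner sum into a sum over $\boldsymbol{\ell}\in\Z^{d}$ of oscillatory integrals
$$I_{q}(\bfk,\boldsymbol{\ell})=q^{d}\int_{\R^{d}}\Omega(\bfx)\,e\bigl(q\langle\bfk,\bfF(\bfx)\rangle-q\langle\boldsymbol{\ell},\bfx\rangle\bigr)\,\rd\bfx.$$
Outside a small exceptional subset of $\sU_{d}$, a multi-dimensional van der Corput/stationary-phase argument powered by the $l$-nondegeneracy of $\bfF$ provides cancellation whose savings involve the order of nondegeneracy through the factor $(2l-1)$. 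The exceptional points are those where the gradient $\nabla(\langle\bfk,\bfF(\bfx)\rangle-\langle\boldsymbol{\ell},\bfx\rangle)$ is atypically small; the measure of such a set is precisely what the Bernik--Kleinbock--Margulis non-divergence estimate controls.

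\emph{Assembly and main obstacle.} Re-summing the resulting bounds over $q\asymp Q$, over $\boldsymbol{\ell}\in\Z^{d}$, and over the medium-range $\bfk$, and inserting the BKM exponent (whose shape explains the factor $d(2l-1)(n+1)$ in the denominator and the factor $\delta^{-(n-1/2)}$), one collects the displayed error $Q^{d+1}\bigl(\delta^{-(n-1/2)}Q^{-3/2+(n+5)\eta}\bigr)^{1/(d(2l-1)(n+1))}$. The second assertion is immediate: for $\delta$ slightly above the critical threshold this error is $O(Q^{-\epsilon})\cdot\delta^{m}Q^{d+1}$. The main obstacle is the delicate interplay in the intermediate frequency range: the van der Corput cut-off and the BKM exceptional set must be balanced so that one is thin enough to be absorbed by non-divergence while the other is thick enough for stationary phase to deliver the required power savings. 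Making this trade-off uniform in $\bfk$, $\boldsymbol{\ell}$, and $q$ is what pins down the exponents in the error term and the admissible range of $\delta$.
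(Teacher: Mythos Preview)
Your outline captures the Fourier skeleton but misidentifies where the key exponents originate, and this leads to a structural gap.

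In the paper, the splitting into ``good'' and ``bad'' regions is performed on the weight $\Omega$ \emph{before} any Fourier analysis: one writes $\Omega=\Omega_{\mathrm{good}}+\Omega_{\mathrm{bad}}$ via a smooth cutoff $W_r$ supported on a slight thickening of the sub-level set $\fS_\bfp=\{\bfx:\exists\,\bfv,\ \|\Phi_\bfv(\bfx)\|<\Delta,\ \|\nabla\Phi_\bfv(\bfx)\|_\infty<K,\ 0<\|\bfv\|_\infty<T\}$. The bad contribution $N^{\Omega_{\mathrm{bad}}}$ is then bounded by \emph{direct lattice-point counting} in the support (covered by $O(\mu_d(\fS_{2\bfp})r^{-d})$ balls of radius $r$), with no Fourier sum at all; this is where the BKM bound $\mu_d(\fS_{2\bfp})\ll(\Delta KT^{n-1})^{1/(d(2l-1)(n+1))}$ enters and produces the factor $(2l-1)$. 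On the good part one Poisson-sums in both $\bfa$ and $q$; at every $\bfx\in\supp(\Omega_{\mathrm{good}})$ and for every relevant $\bfv$, either $\|\Phi_\bfv(\bfx)\|\ge\Delta$ or $\|\nabla\Phi_\bfv(\bfx)\|_\infty\ge K$, so plain integration by parts (in $y$ or in $\bfx$, respectively) yields \emph{arbitrary} polynomial decay, limited only by the $r$-roughness of $W_r$. The $(2l-1)$ does not arise from any van der Corput or stationary-phase estimate.

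Your plan instead splits inside the oscillatory integral for each fixed frequency $(\bfk,\boldsymbol{\ell})$ and then sums. This has two problems. First, your exceptional set only imposes smallness of the gradient; without the additional condition $\|\Phi_\bfv(\bfx)\|<\Delta$ (which requires Poisson in $q$ to introduce the dual variable $c$), BKM is applied with $\Delta=1$ and is far too weak. Second, and more seriously, even with the right set: summing the ``bad'' contribution over the $\asymp J^{n+1}$ frequencies costs a factor $J^{n+1}$, whereas BKM bounds the measure of the \emph{union} over $\bfv$, not the sum of individual measures. The paper's device of excising the bad set from $\Omega$ once and for all, and counting rational points there directly, is exactly what circumvents this loss; without it your re-summation step does not close.
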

The main novelty of this theorem is that we only need to assume smoothness and nondegeneracy, but no condition on Gaussion curvature is needed. 
In comparison to Conjecture \ref{conj Huang}, we only obtain here results for a smaller range of $\delta$, but in this range we can establish the expected order of magnitude which matches the main conjecture.\par
One may also compare this to work of Srivastava and Technau \cite{ST} on hypersurfaces parametrized by a homogeneous function of degree at least two with non-vanishing curvature away from the origin, where for the first time they are able to obtain essentially sharp estimates for $N_{\bfF}^{\bft}(\delta,Q)$ for $\delta \geq Q^{-1+\varepsilon}$.  As discussed in \cite{Bers12, Huang20, ST}, we cannot expect the asymptotic formula to hold for $\delta$ as small as $Q^{-\frac{1}{m}+\epsilon}$ under the very weak condition of nondegeneracy. It is an interesting problem to determine the right conditions on $\delta$ for the asymptotic to be true for the class of $l$-nondegenerate smooth manifolds.\par

Further, we show lower bounds. 
\begin{theorem}[Lower Bounds]\label{thm main lower bounds}
Let $0<\eta\leq \frac{1}{8}$ and $a_n:=\frac{2n+12}{2n-1}$.
Suppose $\bfF$ is $l$-nondegenerate, and at least 
$\lceil \frac{n+1}{\eta}\rceil$ times continuously differentiable. 
If $\delta \in 
(Q^{-\frac{3}{2n-1}+a_n\eta},1/2)$, then 
$$
N_{\bfF}^{\bft}(\delta,Q)
    \geq    
c_{\bft} \delta^{m} Q^{d+1} +O\left(\delta^m Q^{d+1-\frac{\eta}{d(2l-1)(n+1)}}\right).
$$
\end{theorem}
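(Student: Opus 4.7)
The plan is to follow the Fourier-analytic $+$ quantitative non-divergence blueprint that underlies Theorem~\ref{thm main asymp bounds}, arranged so that only \emph{one-sided} control of the error is required; this is what allows $\delta$ to range down to the wider threshold stated here. First I would apply Poisson summation to each periodic factor $w(\|qF_i(\bfa/q)\|/\delta)$, introducing dual integers $\bfk=(k_1,\ldots,k_m)\in\bZ^m$, and then Poisson summation to the lattice sum over $\bfa\in\bZ^d$, introducing dual integers $\bfl\in\bZ^d$. The mode $(\bfk,\bfl)=(\bzero,\bzero)$ produces the main term $c_\bft\,\delta^m Q^{d+1}$, while the remaining modes yield an error $E(\delta,Q)$ built from products $\delta^m\prod_i\hat w(\delta k_i)$ and oscillatory integrals with phase $q\psi_{\bfk,\bfl}(\bfx)$, where $\psi_{\bfk,\bfl}(\bfx):=\langle\bfk,\bfF(\bfx)\rangle-\langle\bfl,\bfx\rangle$. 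The Schwartz decay of $\hat w$ and integration by parts in $\bfx$ truncate the effective summation to $\|\bfk\|_\infty\lesssim \delta^{-1}Q^\eta$ and $\|\bfl\|_\infty\lesssim Q\delta^{-1}Q^\eta$, with a tail contribution of $O_A(Q^{-A})$ for any $A>0$.

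For each $(\bfk,\bfl)\ne(\bzero,\bzero)$ in this truncated range, the phase $\psi_{\bfk,\bfl}$ is a nonzero linear combination of $1,x_1,\ldots,x_d,F_1,\ldots,F_m$, so the $l$-nondegeneracy of $\bfF$ together with the Bernik--Kleinbock--Margulis theorem~\cite{BKM} furnish a polynomial sublevel-set estimate for $\psi_{\bfk,\bfl}$. Combined with van der Corput / stationary-phase bounds in the $q$-integral, this yields an oscillatory integral bound decaying as a negative power of $Q\cdot\max(\|\bfk\|_\infty,\|\bfl\|_\infty/Q)$ with exponent $1/(d(2l-1)(n+1))$. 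Summed pointwise over the truncated range of $(\bfk,\bfl)$, this recovers an absolute error of the shape appearing in Theorem~\ref{thm main asymp bounds}.

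The key structural improvement for the lower bound is to pass to a positive, Fourier-truncated \emph{minorant} $w^-\le w$ with $\widehat{w^-}$ supported in a ball of radius $R\asymp \delta^{-1}Q^\eta$, so that the admissible tuple $\bft^-=(\Omega,\omega,w^-)$ satisfies $N_\bfF^{\bft^-}(\delta,Q)\le N_\bfF^\bft(\delta,Q)$ pointwise, while only polynomially many Fourier modes contribute to $N_\bfF^{\bft^-}$. A careful balance of the BKM per-mode bound against the $\bfl$-mode count (controlled by the derivatives of $\psi_{\bfk,\bfl}$) then shows the resulting error has size $\delta^m Q^{d+1-\eta/(d(2l-1)(n+1))}$ precisely when $\delta>Q^{-3/(2n-1)+a_n\eta}$ with $a_n=(2n+12)/(2n-1)$, matching the hypothesis. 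Choosing $w^-$ so that $\int w-\int w^-=O(Q^{-\eta/(d(2l-1)(n+1))})$ preserves the constant $c_\bft$ up to an admissible additive error.

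The main obstacle is the careful tuning of the truncation scale $R$: the BKM per-mode bound worsens polynomially in $R$ while the Fourier mode count grows polynomially in $R$, and to reach the threshold $\delta>Q^{-3/(2n-1)+O(\eta)}$ one must extract the sharpest possible exponent from both ingredients simultaneously, tracking separately the savings from $\bfk$-decay and from the $\bfl$-integration by parts. This is precisely where the one-sided nature of the argument gives an advantage over Theorem~\ref{thm main asymp bounds}: the lower bound tolerates an error that is still a full factor of $\delta^m$ in size (absorbed against the main term), whereas the two-sided asymptotic does not, and it is this relaxation that replaces the denominator $2md(2l-1)(n+1)+2n-1$ of Theorem~\ref{thm main asymp bounds} with the smaller denominator $2n-1$ here.
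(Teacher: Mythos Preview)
Your proposal misses the central mechanism of the paper's proof and, as written, does not close. The paper does \emph{not} bound the oscillatory integrals mode by mode using a BKM-type sublevel estimate for each individual phase $\psi_{\bfk,\bfl}$, nor does it introduce a Fourier-truncated minorant $w^-$. Instead, the key step is a \emph{spatial} decomposition: one constructs a smooth cut-off $W_r$ supported on the BKM sublevel set $\fS_\bfp=\fS_{\Delta,K,T}$ (defined simultaneously over \emph{all} integer vectors $\bfv$ with $\|\bfv\|_\infty<T$), and splits $\Omega=\wgood+\wbad$ with $\wbad=\Omega\cdot W_r$. Since $\wbad\ge 0$, one has $N^{\Omega}(\delta,Q)\ge N^{\wgood}(\delta,Q)$, and this inequality is the entire source of the one-sidedness. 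On $\mathrm{supp}(\wgood)$, for every relevant $\bfv$ one has either $\|\Phi_\bfv(\bfx)\|\ge\Delta$ or $\|\nabla\Phi_\bfv(\bfx)\|_\infty\ge K$, so integration by parts in $y$ or in $\bfx$ gives \emph{arbitrary} polynomial decay for the nonzero modes, yielding $E(\delta,Q)=O(\delta^m Q^{d+1-\eta})$ once $\bfF$ is $\lceil (n+1)/\eta\rceil$ times differentiable. The BKM theorem enters exactly once, to bound $\mu_d(\fS_{2\bfp})$, and this measure appears only in the main-term computation $M(\delta,Q)=c_\bft\delta^m Q^{d+1}(1+O(\mu_d(\fS_{2\bfp})))$. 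The error $\delta^m Q^{d+1-\eta/(d(2l-1)(n+1))}$ is therefore the main-term loss $\delta^m Q^{d+1}\mu_d(\fS_{2\bfp})$, not a summed oscillatory-integral error.

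Your per-mode scheme runs into the following concrete obstacle: a sublevel-set bound for a single $\psi_{\bfk,\bfl}$ (which is what $(C,\alpha)$-goodness or BKM yields after fixing $\bfv$) does not by itself produce an oscillatory integral bound of size $(Q\|\bfv\|)^{-1/(d(2l-1)(n+1))}$, and even if it did, summing this over the $\asymp J^{n+1}$ modes in the truncated range would swamp the saving. The paper avoids this entirely because on the good set the decay in each mode is $Q^{-N\eta}$ for \emph{any} $N$, not a fixed power. Your final sentence correctly intuits that the wider $\delta$-range comes from gaining a factor $\delta^m$ in the error, but the mechanism is not a minorant $w^-$: it is simply that discarding $N^{\wbad}$ costs nothing for a lower bound, whereas the asymptotic must also control $N^{\wbad}\ll Q^{d+1}\mu_d(\fS_{2\bfp})$, which lacks the $\delta^m$.
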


Note that lower bounds of the expected order of magnitude have been established by Beresnevich \cite{Bers12} for nondegenerate analytic manifolds as soon as $\delta \gg Q^{-\frac{1}{m}}$, and for analytic curves with a certain mild condition on a Wronskian, comparable to nondegeneracy, for $\delta \gg Q^{-\frac{3}{2n-1}}$. In particular, in the case of curves, the aforementioned result is stronger than our result by an $\varepsilon$, since the latter allows us to take $\delta \gg Q^{-\frac{3}{2n-1}+\varepsilon}$ for any $\varepsilon >0$ for smooth nondegenerate curves. For general manifolds, we obtain a strictly larger range for $\delta$ if we have the lower bound $m> \frac{2n-1}{3}$ on the codimension $m$. Moreover, a new aspect of our theorem is that we only need to assume the nondegenerate manifold to be smooth, as opposed to analytic in \cite{Bers12}. With this we also answer a question of Beresnevich and Kleinbock in \cite[Section 5.1]{BK22}.\par

The next theorem establishes upper bounds for $N_{\bfF}^{\bft}(\delta,Q)$.
\begin{theorem}[Upper Bounds]\label{thm main upper bounds}
Let $0<\eta \leq \frac{1}{2n+10}$. Suppose $\bfF$ is $l$-nondegenerate, and at least 
$\lceil \frac{n+1}{\eta}\rceil$ times continuously differentiable. 
 Then for $0\leq \delta < 1/2$ we have
 $$N_{\bfF}^{\bft}(\delta,Q)\ll \delta^m Q^{d+1} + Q^{d+1- \frac{3m-m(2n+10)\eta}{2md(2l-1)(n+1)+2n-1}}   .$$
\end{theorem}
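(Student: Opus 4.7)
The plan is to bootstrap Theorem \ref{thm main asymp bounds}: its error term will be shown to match the second term in the claimed bound exactly at the lower boundary of the range \eqref{range delta}, and the sub-threshold regime is handled by a standard Schwartz majorant. Set $A:=2md(2l-1)(n+1)+2n-1$ and define the threshold
$$
\delta_*\;:=\;Q^{(-3+(2n+10)\eta)/A}.
$$
A short exponent computation yields $\delta_*^m Q^{d+1}=Q^{d+1-(3m-m(2n+10)\eta)/A}$, so the claim reduces to
$$
N_{\bfF}^{\bft}(\delta,Q)\;\ll\;\delta^m Q^{d+1}+\delta_*^m Q^{d+1}.
$$

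For $\delta\in[\delta_*,1/2)$ I would invoke Theorem \ref{thm main asymp bounds} directly. Its error term
$$
Q^{d+1}\bigl(\delta^{-(n-\tfrac12)}Q^{-\tfrac32+(n+5)\eta}\bigr)^{1/(d(2l-1)(n+1))}
$$
is strictly decreasing in $\delta$ (since $n\ge 2$), hence maximised at $\delta=\delta_*$. The identity $3-(2n+10)\eta=2(\tfrac32-(n+5)\eta)$ together with $A-(2n-1)=2md(2l-1)(n+1)$ then shows, after cancellation, that this error at $\delta=\delta_*$ equals exactly $\delta_*^m Q^{d+1}$, matching the main term there. This yields $N_{\bfF}^{\bft}(\delta,Q)\ll \delta^m Q^{d+1}+\delta_*^m Q^{d+1}$ throughout the good range.

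For the sub-threshold regime $0\le\delta<\delta_*$, I would use a majorant comparison. Fix an even Schwartz function $\widetilde w:\R\to[0,1]$ with support in $(-1,1)$ and $\widetilde w\equiv 1$ on $[-\tfrac12,\tfrac12]$, so that $\tilde\bft:=(\Omega,\omega,\widetilde w)$ is admissible. The pointwise inequality
$$
w(y/\delta)\;\le\;\|w\|_\infty\mathbf{1}_{|y|<\delta}\;\le\;\|w\|_\infty\mathbf{1}_{|y|<\delta_*}\;\le\;\|w\|_\infty\,\widetilde w\bigl(y/(2\delta_*)\bigr)
$$
gives $N_{\bfF}^{\bft}(\delta,Q)\le \|w\|_\infty^{m}\,N_{\bfF}^{\tilde\bft}(2\delta_*,Q)$. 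Since $2\delta_*\in[\delta_*,1/2)$ for $Q$ large, the good-range bound just established (applied to $\tilde\bft$) estimates the right-hand side by $\ll \delta_*^m Q^{d+1}$, as required.

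The main work is hidden in the threshold-matching computation of the second paragraph; once the algebraic identity for $A$ is noticed, the remainder is elementary monotonicity plus a routine Schwartz-majorant argument. A minor technical point is that Theorem \ref{thm main asymp bounds} requires slightly more smoothness than is assumed here, but this is easily absorbed by applying it with a marginally enlarged parameter $\eta'\!\gtrsim\!\eta$, which costs only a negligible adjustment to the exponent.
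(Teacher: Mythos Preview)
Your overall plan---establish the bound for $\delta\ge\delta_*$ and then descend via a majorant---is precisely the paper's own structure, and the exponent bookkeeping verifying that the error term of Theorem~\ref{thm main asymp bounds} at $\delta=\delta_*$ collapses to $\delta_*^m Q^{d+1}$ is correct. Your Schwartz-majorant treatment of the sub-threshold regime is in fact more careful than the paper's one-line appeal to ``monotonicity of $N^\Omega(\delta,Q)$'', which is not literally true for an arbitrary admissible $w$.

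There are, however, two genuine problems with invoking Theorem~\ref{thm main asymp bounds} as a black box. First, in this paper Theorem~\ref{thm main asymp bounds} is proved \emph{after} Theorem~\ref{thm main upper bounds}, and its proof explicitly quotes the estimate \eqref{eq N bad bound} on $N^{\wbad}$ that is derived inside the proof of Theorem~\ref{thm main upper bounds}; as the paper is organised, your argument is circular. Second, your $\eta'$-fix for the smoothness mismatch does not recover the stated theorem: to force $\lceil(n+2)/\eta'\rceil\le\lceil(n+1)/\eta\rceil$ one generically needs $\eta'\ge\tfrac{n+2}{n+1}\eta$, and the resulting bound then carries $\eta'$ rather than $\eta$ in the exponent, weakening the second term by a fixed positive power of $Q$. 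That is a strictly weaker conclusion, not a ``negligible adjustment''.

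The paper sidesteps both issues by working directly with the decomposition $N^\Omega=N^{\wgood}+N^{\wbad}$ rather than passing through Theorem~\ref{thm main asymp bounds}. For $\delta\ge\delta_*$ the Fourier machinery of \S\ref{subsec fourier expansion}--\ref{subsec ibp} (specifically Lemma~\ref{le error small}, which needs only the $\lceil(n+1)/\eta\rceil$ derivatives assumed here) gives $N^{\wgood}\ll\delta^m Q^{d+1}$; the new step is the bound on $N^{\wbad}$, obtained by covering $\supp(\wbad)\subset\fS_{2\bfp}$ with $O(\mu_d(\fS_{2\bfp})r^{-d})$ balls of radius $r$ (Lemma~\ref{le smooth cut off}(3)), counting rationals in each via Lemma~\ref{le rationals in a ball}, and then applying Theorem~\ref{thm quant non div} to $\mu_d(\fS_{2\bfp})$. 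These are exactly the ingredients hidden inside your black-box appeal; unpacking them removes the circularity and the extra derivative simultaneously.
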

Note that if 
\begin{equation}\label{eqndelta3}
\delta \geq Q^{\frac{-3+(2n+10)\eta}{2md(2l-1)(n+1)+2n-1}},
\end{equation}
then it is the first term which dominates, yielding the exact upper bound as that obtained in \cite[Theorem 1.3]{BY}. This agrees with the heuristic expectation. In \cite{BY}, the authors formulate their theorem in a way, that they give an upper bound which excludes a small set of the manifold, which they call special parts. We combine our estimates on the `good set', also called generic parts in \cite{BY}, with bounds obtained by lattice considerations for the `sub-level set', to derive an upper bound for the whole counting function $ N_{\bfF}^{\bft}(\delta,Q)$. Indeed, the second term in the upper bound above corresponds to the contribution from the special parts in the language of \cite{BY}.\par

A nice feature of the Theorems \ref{thm main asymp bounds}-\ref{thm main upper bounds}, is that we can explicitly keep track of the smoothness (of $\mathbf{F}$) that is needed to establish the asymptotics, upper and lower bounds.\par 
Next, we state a consequence of Theorem \ref{thm main upper bounds} which provides an upper bound for the number of rational points {\em on} the manifold $\cM$.

\begin{corollary}\label{cor upper bound}
Suppose $\bfF$ is $l$-nondegenerate, and at least 
$2(n+1)(n+5)$ times continuously differentiable.
Then we have the upper bound
$$N_{\bfF}^{\bft}(0,Q) \ll  Q^{d+1- \frac{1}{2ld(n+1)}}.$$

\end{corollary}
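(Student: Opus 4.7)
The plan is to deduce the corollary from Theorem \ref{thm main upper bounds} by specialising parameters. First, I would note that the theorem is valid for all $\delta \in [0, 1/2)$, so we may simply substitute $\delta = 0$; this kills the first term $\delta^m Q^{d+1}$ and leaves only the second term.

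Next, I would choose $\eta = 1/(2(n+5))$, the largest value allowed by the constraint $\eta \leq 1/(2n+10)$. With this choice the quantity $m(2n+10)\eta$ equals $m$, so the bound specialises to
$$N_{\bfF}^{\bft}(0,Q) \ll Q^{d+1 - \frac{2m}{2md(2l-1)(n+1) + 2n-1}}.$$
Crucially, the smoothness requirement $\lceil (n+1)/\eta\rceil = 2(n+1)(n+5)$ matches precisely the hypothesis of the corollary.

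It then remains to verify the elementary inequality
$$\frac{2m}{2md(2l-1)(n+1) + 2n-1} \geq \frac{1}{2ld(n+1)},$$
which, after cross-multiplying and using the identity $4lmd(n+1) - 2md(2l-1)(n+1) = 2md(n+1)$, reduces to $2md(n+1) \geq 2n-1$. Since $m+d = n$ with $m, d \geq 1$, we have $md \geq n-1$, and therefore $2md(n+1) \geq 2(n^2-1) \geq 2n-1$ for every $n \geq 2$, which is the standing assumption of the paper.

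There is no real obstacle beyond tracking the algebra: the corollary is a clean specialisation of the preceding theorem, with $\eta$ tuned exactly to meet the smoothness threshold stated in the hypothesis.
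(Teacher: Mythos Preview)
Your proof is correct and follows exactly the intended approach: the paper states the corollary as an immediate consequence of Theorem~\ref{thm main upper bounds} without spelling out the details, and your choice $\eta = 1/(2n+10)$, $\delta = 0$, together with the verification of the elementary inequality $2md(n+1)\ge 2n-1$, is precisely what is needed to fill the gap.
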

First, we observe that the bound in Corollary \ref{cor upper bound} improves upon the trivial upper bound $Q^{d+1}$. Its strength lies in the generality to which it applies and in the simple form it takes. In particular, it also applies to pieces of projective varieties, that can in Monge form be represented by an $l$-nondegenerate function. For normal projective varieties, Tanimoto has recently given a very general upper bound for the number of rational points of bounded height, which is governed by a certain ``$\delta$ invariant'' that he introduces in \cite{Tanimoto}. His methods are of geometric nature and in particular require geometric understanding in the computation of the $\delta$ invariant. Our bounds show that we can already get {\em some} non-trivial upper bound by completely forgetting the specific algebraic-geometric structure of $\cM$, and working purely analytically.\par
In the case that $\cM$ is a piece of a projective variety (after considering an affine patch of it), the counting function $N_{\bfF}^{\bft}(0,Q)$ counts the number of rational points with bounded denominator that lie on a given piece of the variety. This is linked to deep problems in Diophantine geometry. For Fano varieties we have the Batyrev-Manin conjecture \cite{BM90} for the number of rational points of bounded height on such varieties. If we consider more general classes of projective varieties defined over $\bQ$, we have Serre's dimension growth conjecture, which gives, under the very weak assumption that the variety is irreducible and of degree at least $2$, non-trivial upper bounds for $N_{\bfF}^{\bft}(0,Q)$. For a precise formulation of Serre's dimension growth conjecture see \cite{Serre} and for work towards the conjecture as well as the solution of a weak form of it by works of Browning, Heath-Brown and Salberger see \cite{Browning}, \cite{BHS06}, \cite{Sal07} as well as references therein.\par
In \cite{Huang20} Huang has reproved Serre's dimension growth conjecture for certain classes of varieties, for which his condition on nonvanishing Gaussian curvature is satisfied,
and extended it to a natural class 
of manifolds. In our upper bound we do not recover the full strength of the dimension growth conjecture, but under the much milder restriction of nondegeneracy, we show that we can already get nontrivial upper bounds.\par

\subsection{Applications to Diophantine approximation}
\label{subsec intro app}
In a different direction, upper and lower bounds for $N_{\bfF}^{\bft}(\delta,Q)$ have important applications in Diophantine approximation on manifolds.

\begin{definition}
    \label{def psi approx}
    Given a function $\psi:(0,+\infty)\to(0,1)$,  we call a point $\bfy\in\R^n$ {\em $\psi$-approximable} if the condition
\begin{equation}\label{psi-app}
 \left\|\bfy-\frac{\bfa}{q}\right\|_{\infty}<\frac{\psi(q)}{q}
\end{equation}
holds for infinitely many $(\bfa,q)\in\Z^n\times\N$. 
\end{definition}
We shall denote the set of $\psi$-approximable points in $\R^n$ by $\cS_n(\psi)$. An approximation function of particular importance is the map $\psi_{\tau}(q)=q^{-\tau}$ for some $\tau>0$. To abbreviate notation, we will in this case write $\cS_n(\tau):=\cS_n(\psi_{\tau})$, and call it simply the set of $\tau$-approximable points. By Dirichlet's theorem \cite{Schmidt-1980}, $\cS_n(1/n)=\R^n$. Indeed, the classical Khintchine’s theorem states that for any decreasing approximating function $\psi$,
$$\sum_{q=1}^{\infty}\psi(q)^n<\infty \implies \mu_n\left(\cS_n(\psi)\right)=0,\qquad\, \textbf{(Convergence Case)}$$
while
$$\sum_{q=1}^{\infty}\psi(q)^n=\infty \implies \mu_n\left(\mathbb{R}^n\setminus\cS_n(\psi)\right)=0. \qquad\, \textbf{(Divergence Case)}$$

Hausdorff dimension and Hausdorff measure refinements of the above result are also well known. For a Lebesgue measurable set $A\subseteq \mathbb{R}^n$, let $\dim (A)$ denote its Hausdorff dimension, and $\cH^s (A)$ its $s$--dimensional Hausdorff measure. A classical result due to Jarn\'ik and Besicovitch \cite{Besicovich-1934, Jarnik-1929} says that, for $\tau \geq 1/n$, 
\begin{equation}
  \label{eq JaBe}
  \dim \cS_n(\tau)=\frac{n+1}{\tau+1}\,.
\end{equation}

Another fundamental theorem due to Jarn\'ik \cite{Jarnik-1931} states that given a monotonic function $\psi$ and $s\in (0, n)$,
\begin{equation*}
  \cH^s\big(\cS_n(\psi)\big)=\left\{\begin{array}{cc}
                                  0 & \text{if }\sum_{q=1}^\infty \
 q^n\Big(\frac{\psi(q)}q\Big)^{s}<\infty\,, \\[2ex]
                                  \infty & \text{if }\sum_{q=1}^\infty \
 q^n\Big(\frac{\psi(q)}q\Big)^{s}=\infty\,.
                                \end{array}
  \right.
\end{equation*}
\smallskip
On the other hand, establishing the above results for $\psi$-approximable set of points on a generic \textit{manifold}, has proven to be a notoriously hard problem. The divergence case in Khintchine's theorem, for nondegenerate $C^3$ curves in the plane, was established in \cite{BDV07}, before being completely settled for nondegenerate \textit{analytic} manifolds of arbitrary dimension (and codimension) in \cite{Bers12}. The convergence case is even more challenging. The seminal work \cite{Vaughan-Velani-2007} answered this question for non-degenerate $C^3$ planar curves. This result was later extended to weakly nondegenerate planar curves in \cite{HuangRPAdv}, see also \cite{BZ} for earlier work into that direction. 

The main result of \cite{BY} was a breakthrough in that it completely settles the convergence case of Khintchine's theorem for manifolds of arbitrary dimension (and codimension), under just a nondegeneracy condition. More precisely, \cite[Theorem 1.2]{BY} establishes that if a monotonic approximating function $\psi$ satisfies the condition
$$\sum_{q=1}^\infty \psi (q)^n<\infty, $$
then for any $d$- dimensional nondegenerate submanifold $\cM\subseteq \mathbb{R}^n$, $\mu_d$ almost all points on $\cM$ are not $\psi$-- approximable.
For comparison, the best known results before \cite{BY} were able to settle the convergence question for manifolds only under very rigid geometric and dimensional constraints, see \cite[p. 3]{BY}.

The authors remark in \cite{BY} that these assumptions were often a by-product of using Fourier analytic tools. In contrast, the approach in \cite{BY} relied on combining techniques from homogeneous dynamics and the geometry of numbers with quantitative non-divergence estimates from \cite{BKM}. One of the primary goals of this manuscript is to demonstrate that Fourier analysis, when combined in the right manner with the said quantitative non-divergence estimates, can be as powerful and versatile as dynamical methods, in proving results on Diophantine approximation for smooth manifolds. In fact, quite pleasantly, not only do our methods recover
the main result from \cite{BY} (see Remark \ref{rem Kh rev}), but they also lead to new refinements of Khintchine's theorem for smooth nondegenerate manifolds, pertaining to both the 
Hausdorff dimension and Hausdorff measure of $\tau$- approximable points on them. Moreover, our lower bounds for Hausdorff dimension generalize the one in \cite{Bers12} from analytic to 
sufficiently smooth manifolds
which addresses a question of
Beresnevich \cite[p. 231]{Bers12}. \\
\\
Next, our focus shall be 
on the following problem stated 
in \cite{BY}.

\begin{problem}[Dimension Problem]\label{prob dim conj}
Let $1\le d<n$ be integers and $\widetilde{\cM}_{n,d}$ be the class of submanifolds $\cM\subset\R^n$ of dimension $d$ which are nondegenerate at every
point. Find the maximal value $\tau_{n,d}$ such that
\begin{equation}\label{dimconj}
 \dim\cS_n(\tau)\cap\cM=\frac{n+1}{\tau+1}- \codimension \cM\quad\text{ whenever }
1/n\le\tau<\tau_{n,d}
\end{equation}
for every manifold $\cM\in \widetilde{\cM}_{n,d}$. 
\end{problem}

We refer to \cite{Bers12} and \cite[\S1.6.2]{BRV16} for an explanation about the form of the expression \eqref{dimconj}, which arises from volume-based heuristics
for the number of rational points 
lying close to $\cM$. 
Problem \ref{prob dim conj} was solved 
for nondegenerate planar curves 
in \cite{BDV07} and \cite{BZ}, 
with $\tau_{2,1}=1$. We encourage the 
reader to consult \cite{BDV07, BZ, HuangRPAdv, Vaughan-Velani-2007} 
for further generalizations, refinements 
and extensions. The problem remains 
open for curves in dimensions $n\ge3$ 
and subclasses of nondegenerate manifolds in $\R^n$ of every dimension $d<n$. \par

Consider the well known example (see \cite[Example 1.3]{Bers12} and \cite[\S 2]{BY}) provided by the manifold
\begin{equation*}
\cM:=\{(x_1,\dots,x_d,x_d^2,\dots,x_d^{n+1-d})\in\mathbb{R}^n: (x_1,\dots,x_d)\in \mathscr{U}_d\}.
\end{equation*}
It is easily verified that $\cM$ is nondegenerate, and since it contains a (compact piece) of a $d-1$ dimensional hyperplane, $\dim\cS_n(\tau)\cap\cM\ge \dim\cS_{d-1}(\tau)$. Therefore, by \eqref{eq JaBe}, for $\tau> \frac{1}{n-d}$, we have that
$$
 \dim\cS_n(\tau)\cap\cM\ge \frac{d}{\tau+1}>\frac{n+1}{\tau+1}-\text{ codim }\cM\,.
$$
Consequently, $\tau_{n,d}\le \frac{1}{n-d}$ for $d>1$.  Observe that this example is not applicable to curves in higher demensions (when $d=1$). In \cite{BY}, it was conjectured that within the class $\widetilde{\cM}_{n,d}$ of nondegenerate manifolds of dimension greater than one, this upper bound is exact. For curves, a different lower bound was conjectured, informed by estimates obtained in \cite[Theorem 7.2]{Bers12} and \cite{BVVZ21}.

\begin{conjecture}\label{conj dim nondeg}

(i) Let $1<d<n$. Then $\tau_{n,d}=\frac{1}{n-d}$.

(ii) For $n\geq 2$, $\tau_{n, 1}=\frac{3}{2n-1}$.
\end{conjecture}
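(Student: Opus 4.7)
The plan is to deduce Conjecture \ref{conj dim nondeg}, to the extent the tools at hand allow, from the rational-point estimates of Theorems \ref{thm main lower bounds}--\ref{thm main upper bounds} via the standard Cantelli-type correspondence between counts of rational vectors $\bfa/q$ near $\cM$ and the Hausdorff dimension of $\tau$-approximable points on $\cM$. Setting $\delta = Q^{-\tau}$, an upper bound of the shape $N_{\bfF}^{\bft}(\delta, Q) \ll \delta^m Q^{d+1}$ translates, under a covering of $\cS_n(\tau) \cap \cM$ by balls of radius $q^{-\tau-1}$ centred at admissible $\bfa/q$, into the upper estimate $\dim(\cS_n(\tau) \cap \cM) \leq \frac{n+1}{\tau+1} - (n-d)$, while a matching lower bound on the count yields the reverse inequality via the mass transference principle of Beresnevich--Velani and the ubiquity framework of Beresnevich--Dickinson--Velani.

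For the upper bound on dimension I apply Theorem \ref{thm main upper bounds} with $\delta = Q^{-\tau}$, separating the regimes in which the main term $\delta^m Q^{d+1}$ or the error term $Q^{d+1-\sigma}$ dominates, where $\sigma$ is the exponent extracted from Theorem \ref{thm main upper bounds}. When the main term wins, the Hausdorff--Cantelli series $\sum_Q N_{\bfF}^{\bft}(Q^{-\tau}, Q)\, Q^{-(\tau+1)s}$ converges exactly for $s > (d+1 - m\tau)/(\tau+1)$, producing the sharp dimensional upper bound; in the complementary small-$\tau$ regime I use instead the sharp asymptotic $N_{\bfF}^{\bft}(\delta, Q) \sim c_{\bft} \delta^m Q^{d+1}$ from Theorem \ref{thm main asymp bounds}, which yields the same conclusion. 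For the matching lower bound, Theorem \ref{thm main lower bounds} supplies $N_{\bfF}^{\bft}(Q^{-\tau}, Q) \gtrsim Q^{d+1-\tau m}$ for every $\tau < 3/(2n-1)$; regarding the associated rational vectors as a ubiquitous system on $\cM$ at scale $\rho(Q) = Q^{-(\tau+1)}$ and feeding this into the ubiquity--mass-transference machinery gives $\dim(\cS_n(\tau) \cap \cM) \geq \frac{n+1}{\tau+1} - (n-d)$. Combining the two directions establishes the dimension formula throughout $[1/n, 3/(2n-1))$, whence $\tau_{n,d} \geq 3/(2n-1)$ for every admissible pair $(n, d)$.

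Together with the model example $(x_1, \ldots, x_d, x_d^2, \ldots, x_d^{n+1-d})$ displayed in the excerpt, which gives $\tau_{n,d} \leq 1/(n-d)$ when $d > 1$, the foregoing already matches the conjectured value in part (ii) for curves; to close (ii) completely it remains to exhibit a nondegenerate curve for which the dimension formula fails at $\tau = 3/(2n-1)$, which I would attack by extending the Jarn\'ik--Besicovitch-type constructions of \cite{Bers12} and the upper-bound techniques of \cite{BVVZ17} to Veronese-type curves. The central obstacle lies in part (i) when $d > 1$: the interval $[3/(2n-1), 1/(n-d))$ is out of reach of the present framework because Theorem \ref{thm main lower bounds} is essentially tight in its $\delta$-range, being limited by the dimension-insensitive exponent $3/(2n-1)$ that arises from the quantitative non-divergence input of \cite{BKM}. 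Closing this gap would demand either a codimension-sensitive refinement of \cite{BKM} or an oscillatory-integral bound that exploits the full codimension $m$ rather than only the ambient dimension $n$; this is the structural reason that compels the authors to leave (i) as a conjecture for higher-dimensional manifolds.
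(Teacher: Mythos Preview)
The statement you are attempting to prove is a \emph{conjecture} in the paper: it is explicitly left open, and the paper offers no proof. Your proposal is therefore not a comparison with the paper's argument but an attempt to resolve an open problem, and it contains a genuine gap.

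The central flaw is your claim that Theorems \ref{thm main lower bounds}--\ref{thm main upper bounds} yield the dimension formula \eqref{dimconj} for all $\tau\in[1/n,3/(2n-1))$. The lower bound on $\dim(\cS_n(\tau)\cap\cM)$ is not the issue; it is already known for the larger range $\tau<1/(n-d)$ by \cite{BLVV17}. The obstruction is the \emph{upper} bound on dimension, which needs a sharp upper bound on $N_\bfF^{\bft}(Q^{-\tau},Q)$. Theorem~\ref{thm main upper bounds} gives $N\ll \delta^m Q^{d+1}+Q^{d+1-\sigma}$ with $\sigma=\frac{3m-m(2n+10)\eta}{2md(2l-1)(n+1)+2n-1}$; the second term dominates once $\tau>\sigma/m\approx \frac{3}{2md(2l-1)(n+1)+2n-1}$, which is far smaller than $3/(2n-1)$. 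In that regime your Hausdorff--Cantelli series only yields $\dim\le \frac{d+1-\sigma}{\tau+1}$, strictly above the target. Invoking Theorem~\ref{thm main asymp bounds} does not help: its $\delta$-range is even more restrictive than that of Theorem~\ref{thm main upper bounds}. Indeed, the paper's own Corollary~\ref{cor lbound dim} uses a finer argument than yours---it covers the sub-level set $\fS_\bfp$ separately by small balls rather than folding it into a global count---and still only reaches $\tau<\frac{3\alpha+1}{(2n-1)\alpha+n}$ with $\alpha=\frac{1}{d(2l-1)(n+1)}$, which is strictly less than $3/(2n-1)$.

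For part (ii) there is a second gap: even granting the dimension formula on $[1/n,3/(2n-1))$, you would still need the reverse inequality $\tau_{n,1}\le 3/(2n-1)$, i.e.\ a nondegenerate curve on which \eqref{dimconj} fails at $\tau=3/(2n-1)$. You gesture at this but provide no construction; the example in the paper only gives $\tau_{n,d}\le 1/(n-d)$ and is vacuous for $d=1$. Both parts of the conjecture therefore remain open after your argument, as they do in the paper.
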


\subsubsection{Lower Bounds for Hausdorff dimension}
It was established in \cite[Theorem 1]{BLVV17} that for any $C^2$ submanifold $\cM\subset\R^n$ of dimension $d$ (notice the absence of a nondegeneracy assumption), we have
\begin{equation}\label{eq BLVV17}
 \dim\cS_n(\tau)\cap\cM\ge \frac{n+1}{\tau+1}-\textrm{ codim }\cM\,,
\end{equation}
for $\frac1n\le\tau<\frac{1}{n-d}$.
In the special case of analytic nondegenerate submanifolds of $\R^n$, the following version was obtained in \cite{Bers12}.

\begin{theorem}[Theorem~2.5 in \cite{Bers12}]\label{t2.5}
For every analytic nondegenerate submanifold $\cM$ of $\R^n$ of dimension $d$ and codimension $m=n-d$, any monotonic $\psi$ such that $q\psi(q)^m\to\infty$ as $q\to\infty$ and any $s\in(\frac{md}{m+1},d)$ we have that
\begin{equation}\label{e1.10}
 \cH^s(\cS_n(\psi)\cap\cM)=\infty
\end{equation}
whenever the series
\begin{equation}\label{conv2}
  \sum_{q=1}^\infty \
 q^n\Big(\frac{\psi(q)}q\Big)^{s+m}
\end{equation}
diverges.
\end{theorem}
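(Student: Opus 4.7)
The plan is to establish the Hausdorff measure divergence result through the abstract framework of locally ubiquitous systems (in the sense of Beresnevich--Dickinson--Velani), using as input a quantitative lower bound for the counting function $N_\cM(\delta,Q)$ of rational points near $\cM$, valid for analytic nondegenerate manifolds in the regime $\delta \gg Q^{-1/m}$.

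First, I would reduce to $\cM$ being in normalized Monge form $\{(\bfx,\bfF(\bfx)):\bfx\in\sU_d\}$, so that the $d$-dimensional Hausdorff measure on $\cM$ is comparable (via the parametrisation) to Lebesgue measure on $\sU_d$. For each rational point $\bfa/q$ lying within distance $\psi(q)/q$ of $\cM$, I associate a resonant point $\bfx_{\bfa,q} \in \sU_d$ by projection onto the base. The hypothesis $q\psi(q)^m \to \infty$ combined with monotonicity of $\psi$ ensures $\psi(q)\gg q^{-1/m}$, placing us squarely in the range where the lower bound $N_\cM(\psi(q),Q)\gg \psi(q)^m Q^{d+1}$ (established for analytic nondegenerate $\cM$ in \cite{Bers12}) can be applied.

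Next, I would verify \emph{local} ubiquity of the resonant family $\{\bfx_{\bfa,q}\}_{q\leq Q}$ with a ubiquity function $\rho(Q)$ chosen so that $Q^{d+1}\rho(Q)^d$ is of unit order. The central step is to upgrade the global counting estimate to a localised form: for every ball $B \subset \sU_d$ and all sufficiently large $Q$, the $\rho(Q)$-neighbourhood of $\{\bfx_{\bfa,q}: q\leq Q\}$ intersects $B$ in a set of measure $\gg |B|$. The analyticity of $\cM$ (and the concomitant \L ojasiewicz-type control on the Jacobian and derivatives of $\bfF$) permits the counting bound to be applied uniformly on compact sub-pieces of $\cM$, which is what converts the raw global count into a genuine ubiquity statement.

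With local ubiquity in place, I would invoke the standard $\cH^s$-criterion from ubiquity theory: monotonicity of $\psi$ and divergence of $\sum_q q^n(\psi(q)/q)^{s+m}$ translate into divergence of the weighted ubiquity series, forcing $\cH^s(\cS_n(\psi)\cap\cM) = \infty$ in the asserted range $s\in(md/(m+1),d)$. Here the upper endpoint $s<d$ simply reflects the intrinsic dimension of $\cM$, while the lower endpoint $s>md/(m+1)$ arises from the interplay between the codimension $m$ of $\cM$ in $\R^n$ and the transverse fluctuations of the resonant ``sheets'' around each $\bfx_{\bfa,q}$. The main obstacle is the localisation of the counting estimate: the lower bound is inherently a global count, and extracting from it a uniform lower bound inside arbitrarily small sub-balls of $\sU_d$ requires extra regularity, for which the analyticity assumption is used in an essential way.
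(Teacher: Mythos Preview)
The paper under review does \emph{not} contain a proof of this statement. Theorem~\ref{t2.5} is quoted verbatim from Beresnevich's earlier work \cite{Bers12} (as the label ``Theorem~2.5 in \cite{Bers12}'' indicates) and is invoked here purely as background on lower bounds for Hausdorff dimension; no argument for it is given or sketched in the present manuscript.

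That said, your outline is an accurate summary of how the result is actually established in \cite{Bers12}: one passes to a Monge parametrisation, uses the lower bound $N_\cM(\delta,Q)\gg \delta^m Q^{d+1}$ in the regime $\delta\gg Q^{-1/m}$ (valid for analytic nondegenerate manifolds) to verify local ubiquity of the projected resonant set, and then feeds this into the Beresnevich--Dickinson--Velani ubiquity machinery to deduce $\cH^s$-divergence. So your proposal is not wrong; there is simply nothing in \emph{this} paper to compare it against.
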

The same paper establishes an even stronger result for nondegenerate analytic curves (Theorem 7.2), with $m$ replaced by $\frac{2n-1}{3}$ and for $s\in \left(d-\frac{1}{2}, d\right)=\left(\frac{1}{2}, 1\right)$.  This theorem is also behind the second part of Conjecture \ref{conj dim nondeg}. For partial progress towards the remaining upper bound in Conjecture \ref{conj dim nondeg} by counting rational points, we refer the reader to \cite{BVVZ17, Huang19, Huang20, JJHJL, SY22, Sim18}.

\subsubsection{Upper Bounds for Hausdorff dimension}
Next we present our new results pertaning to upper bounds on the Hausdorff measure and dimension of the set of $\psi$-approximable points on manifolds. 

\begin{theorem}
    \label{thm haus s zero}
    Let $n\geq 2$ be an integer, $s>0$ and let $\mathcal{M}$ be a smooth submanifold of $\mathbb{R}^n$ of dimension $d$ and codimension $m$, such that
    \begin{equation}
        \label{eq excep Hausd}
        \mathcal{H}^s\left(\{\bfy\in\mathcal{M}: \mathcal{M} \text{ is not $l$-nondegenerate at }\bfy\}\right)=0.
    \end{equation}
    Let $\alpha:=\frac{1}{d(2l-1)(n+1)}$, and $\psi:(0, \infty)\to (0,1)$ be a monotonic approximation function so that
    \begin{equation}
        \label{eq Hmeas gset conv}
        \sum_{q=1}^{\infty}q^n\left(\frac{\psi(q)}{q}\right)^{s+m}
    \end{equation}
    converges, and there exists an $\epsilon>0$ such that
    \begin{equation}
        \label{eq Hmeas bset conv}
        \sum_{t=1}^{\infty}\left(\frac{\psi(e^t)}{e^{t}}\right)^{\frac{s-d}{2}}(\psi(e^t)^{n-\frac{1}{2}}e^{(\frac{3}{2}-(n+5)\epsilon)t})^{-\alpha}<\infty.
    \end{equation}
    Then
    \begin{equation}
        \label{eq conc Hmeas zero}
        \mathcal{H}^s(\mathcal{S}_n(\psi)\cap \mathcal{M})=0.
    \end{equation}
\end{theorem}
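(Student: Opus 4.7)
The plan is to prove the theorem by a Hausdorff--Cantelli style covering argument, with a dyadic decomposition in $q$ and a splitting of $\cM$ into \emph{generic} and \emph{special} (sub-level) parts as in the framework of \cite{BY}. The starting observation is the inclusion
\begin{equation*}
\cS_n(\psi)\cap\cM \;\subseteq\; \bigcap_{T\geq 1}\bigcup_{t\geq T}\bigcup_{q\in[e^t,e^{t+1})}\bigcup_{\substack{\bfa\in\Z^n \\ \dist(\bfa/q,\cM)\,\leq\, \psi(q)/q}} \big(B_\infty(\bfa/q,\psi(q)/q)\cap\cM\big),
\end{equation*}
so that $\cH^s(\cS_n(\psi)\cap\cM)=0$ will follow once we exhibit an admissible cover whose $s$-th power sum of diameters is finite. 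After localizing $\cM$ to the Monge form \eqref{eq normal Monge} and using hypothesis \eqref{eq excep Hausd} to discard an $\cH^s$-null subset, I would assume throughout the argument that the parametrizing map $\bfF$ is $l$-nondegenerate on the whole patch.

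For each dyadic scale $t$, I would partition the contribution to the cover according to whether the relevant portion of $\cM$ lies in the generic part (where the BKM non-divergence hypotheses are in force quantitatively) or in the residual sub-level set. On the generic part the number of $\bfa/q$ with $q\in[e^t,e^{t+1})$ and $\dist(\bfa/q,\cM)\leq \psi(q)/q$ is dominated by the smoothed counting function $N_\bfF^\bft(\psi(e^t),e^t)$; Theorem \ref{thm main upper bounds} bounds this by $O(\psi(e^t)^m e^{(d+1)t})$ in the relevant $\delta$-range. Using the ambient ball diameter $\psi(q)/q \ll \psi(e^t)/e^t$ to estimate $\mathrm{diam}(B_\infty(\bfa/q,\psi(q)/q)\cap\cM)^s$, the generic contribution to the $s$-sum telescopes to
\begin{equation*}
\sum_t e^{(d+1-s)t}\psi(e^t)^{s+m}\;\asymp\;\sum_q q^n\Big(\frac{\psi(q)}{q}\Big)^{s+m},
\end{equation*}
which is finite by \eqref{eq Hmeas gset conv}.

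For the special part I would not cover around rationals but rather cover the sub-level subset of $\cM$ itself at the natural scale $\rho_t:=(\psi(e^t)/e^t)^{1/2}$, at which scale this set absorbs every $\psi$-approximable point it contains. The $d$-dimensional Lebesgue size of the sub-level set at scale $e^t$ is controlled by the error term of Theorem \ref{thm main asymp bounds}, namely by $\varepsilon_t := \bigl(\psi(e^t)^{n-1/2}\,e^{(3/2-(n+5)\epsilon)t}\bigr)^{-\alpha}$ with $\alpha=1/(d(2l-1)(n+1))$. Covering it by balls of radius $\rho_t$ requires $\ll \varepsilon_t/\rho_t^d$ balls, each contributing $\rho_t^s$ to the $s$-mass, for a total of $\varepsilon_t\,\rho_t^{s-d}=\varepsilon_t\,(\psi(e^t)/e^t)^{(s-d)/2}$, matching \eqref{eq Hmeas bset conv}. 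Summing the two contributions and invoking Hausdorff--Cantelli concludes \eqref{eq conc Hmeas zero}.

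The main obstacle is the treatment of the special part. One must precisely define the sub-level set so that it captures exactly the deficit between the genuine count of rational approximants and the generic bound from Theorem \ref{thm main upper bounds}, then quantify its Lebesgue measure by routing through the BKM non-divergence estimate built into the proofs of Theorems \ref{thm main asymp bounds}--\ref{thm main upper bounds}. Threading the scale $\rho_t=(\psi(e^t)/e^t)^{1/2}$ through this analysis, and verifying that balls of this radius do cover every $\psi$-approximable point lying in the sub-level set, is the decisive technical step.
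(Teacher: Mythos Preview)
Your proposal is correct and mirrors the paper's proof: both run a Hausdorff--Cantelli covering with a dyadic-in-$q$ decomposition, splitting each scale into a generic part (covered by rational balls, counted via the good-part estimate) and a sub-level part (covered directly at radius $(\psi(e^t)/e^t)^{1/2}$, its measure bounded by Theorem~\ref{thm quant non div}). The one refinement worth flagging is that the generic count should come from the analysis of $N^{\wgood}$ directly (Lemmas~\ref{le decomp}--\ref{le error small} together with \eqref{eq main term computed fully}) rather than from Theorem~\ref{thm main upper bounds}: the latter bounds the \emph{full} count and carries the extra term $Q^{d+1-\beta}$ stemming from the bad part, whereas the paper checks that any rational $(\bfa,q)\in\cR(B_0\setminus A_t;\,\cdot\,)$ has $W_r(\bfa'/q)=0$ and is therefore counted by $N^{\wgood}\ll \delta^m Q^{d+1}$ throughout the range $\delta\ge Q^{4\epsilon-1}$---exactly the point you identify as the ``decisive technical step'' in your final paragraph.
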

\begin{remark}
\label{rem Kh rev}
Recall that the $d$- dimensional Hausdorff measure $\cH^d$ is a constant multiple of the Lebesgue measure $\mu_d$. Thus, setting $s=d$ in the above theorem establishes the convergence case of Khintchine's theorem for smooth nondegenerate manifolds of arbitrary dimension. This recovers the main result (Theorem 1.2) of \cite{BY}.    
\end{remark}

The following corollary is also a straightforward consequence of Theorem \ref{thm haus s zero} and the lower bound \eqref{eq BLVV17}. It establishes the expected Hausdorff dimension for the set of $\psi_\tau$-approximable points on a smooth nondegenerate manifold, for $\tau$ close to $1/n$, thus making progress towards resolving Problem \ref{prob dim conj}.
\begin{corollary}
\label{cor lbound dim}
Let $n\geq 2$ be an integer, $\tau\in [\frac{1}{n}, 1)$ be a real number, and $\mathcal{M}$ be a smooth submanifold of $\mathbb{R}^n$ of dimension $d$ which is $l-$nondegenerate everywhere except possibly on a set of Hausdorff dimension $\leq \frac{n+1}{\tau+1}-\text{codim }\mathcal{M}$. Suppose that $\tau$ satisfies
\begin{equation}
    \label{eq tau cond}
    \tau<\frac{3\alpha+1}{(2n-1)\alpha+n},
\end{equation}
where $\alpha:=\frac{1}{d(2l-1)(n+1)}$. 
Then
$$\dim (\mathcal{M}\cap\mathcal{S}_n(\tau))=\frac{n+1}{\tau+1}- \codimension \mathcal{M}.$$
\end{corollary}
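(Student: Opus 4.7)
The plan is to combine matching upper and lower bounds on $\dim(\mathcal{M} \cap \mathcal{S}_n(\tau))$. The lower bound will follow immediately from the estimate \eqref{eq BLVV17}, which requires only $C^2$ smoothness (no nondegeneracy) and gives $\dim(\mathcal{M} \cap \mathcal{S}_n(\tau)) \ge \frac{n+1}{\tau+1} - \codimension \mathcal{M}$ whenever $\frac{1}{n} \le \tau < \frac{1}{n-d}$. My first task is to check that the admissible range $[1/n, \frac{3\alpha+1}{(2n-1)\alpha+n})$ of $\tau$ lies inside $[1/n, 1/(n-d))$, which reduces to the elementary inequality $\alpha(n-3d+1) < d$ and holds comfortably given $\alpha = \frac{1}{d(2l-1)(n+1)}$.

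For the matching upper bound, I would invoke Theorem \ref{thm haus s zero} with the monotonic approximation function $\psi_\tau(q) = q^{-\tau}$ and Hausdorff parameter $s = \frac{n+1}{\tau+1} - m + \delta'$, for $\delta' > 0$ taken arbitrarily small. If Theorem \ref{thm haus s zero} then yields $\mathcal{H}^s(\mathcal{M} \cap \mathcal{S}_n(\psi_\tau)) = 0$, then $\dim(\mathcal{M} \cap \mathcal{S}_n(\tau)) \le \frac{n+1}{\tau+1} - m$ follows on letting $\delta' \to 0^+$. The remaining task is to verify the three hypotheses of Theorem \ref{thm haus s zero}. The exceptional-set condition \eqref{eq excep Hausd} holds because Corollary \ref{cor lbound dim} bounds the Hausdorff dimension of the non-$l$-nondegeneracy locus by $\frac{n+1}{\tau+1} - m < s$, so its $\mathcal{H}^s$-measure vanishes. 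The first series \eqref{eq Hmeas gset conv} reduces to $\sum q^{n - (\tau+1)(s+m)}$, and $s + m = \frac{n+1}{\tau+1} + \delta'$ renders the exponent $-1 - (\tau+1)\delta' < -1$, forcing convergence.

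The heart of the argument is the convergence of the second series \eqref{eq Hmeas bset conv}, and this is the step I expect to demand the most care. Substituting $\psi_\tau(e^t) = e^{-\tau t}$, the $t$-th summand becomes $\exp(c \cdot t)$ with
$$c = -\frac{\tau+1}{2}(s-d) - \alpha\Big(\tfrac{3}{2} - (n+5)\epsilon - \tau(n - \tfrac{1}{2})\Big).$$
Using $s - d = \frac{1 - n\tau}{\tau+1} + \delta'$, the leading coefficient (as $\epsilon, \delta' \to 0^+$) simplifies to $\frac{n\tau - 1}{2} - \frac{\alpha}{2}\bigl(3 - (2n-1)\tau\bigr)$, which is strictly negative precisely when $\tau < \frac{3\alpha+1}{(2n-1)\alpha + n}$: exactly hypothesis \eqref{eq tau cond}. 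Hence I can pick $\delta'$ and $\epsilon$ both small enough that $c < 0$, ensuring geometric decay of the summands. Once this matching of thresholds is in place, Theorem \ref{thm haus s zero} delivers $\mathcal{H}^s(\mathcal{M} \cap \mathcal{S}_n(\tau)) = 0$ for every admissible $s > \frac{n+1}{\tau+1} - m$, and the upper bound follows on sending $\delta' \to 0^+$.
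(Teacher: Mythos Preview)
Your proposal is correct and follows essentially the same approach as the paper's proof: both combine the lower bound \eqref{eq BLVV17} from \cite{BLVV17} with the upper bound obtained by applying Theorem~\ref{thm haus s zero} to $\psi_\tau$, verifying the convergence of \eqref{eq Hmeas gset conv} and \eqref{eq Hmeas bset conv} and checking that the threshold for the latter is exactly \eqref{eq tau cond}. The only cosmetic differences are the order in which you treat the two bounds and your explicit verification that $\frac{3\alpha+1}{(2n-1)\alpha+n}<\frac{1}{n-d}$ reduces to $\alpha(n-3d+1)<d$, which the paper asserts without detail.
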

Note that Beresnevich and Yang \cite[Corollary 2.9]{BY} obtain the same result, but with a range of $\tau$ which is restricted by
$$\frac{n\tau-1}{\tau+1} \leq \frac{\alpha(3-2n\tau)}{2\tau+1}.$$
A short calculation, solving 
the implicit quadratic inequality in $\tau$, 
shows our range \eqref{eq tau cond}
is strictly larger.

\subsubsection{Spectrum of Diophantine exponents}
For $x\in\R$, the exponent of simultaneous rational approximations to $n$ consecutive powers of $x$ is defined as follows
$$
\lambda_n(x):=\sup\left\{\tau>0:(x,x^2,\dots,x^n)\in \cS_n(\tau)\right\}.
$$
This was introduced in \cite{BL05}. 
Dirichlet's theorem implies that $\lambda_n(x)\in[\frac1n,+\infty]$ for any $x\in\R$. The spectrum of $\lambda_n$ is defined to be
$$
\textrm{spec}(\lambda_n):=\lambda_n(\R\setminus\Q)=\left\{\lambda\in\left[\tfrac1n,+\infty\right]:\exists\;x\in\R\setminus\Q\;\text{with}\;\lambda_n(x)=\lambda\right\}.
$$
The following question then arises naturally.
\begin{problem}[Bugeaud-Laurent {\cite[Problem 5.5]{BL07}}]
\label{prob dio exp}
Is\; $\mathrm{spec}(\lambda_n)=[\frac1n,+\infty]$\;?
\end{problem}
In \cite{BB20}, it was established that
$$
\left[\tfrac{n+4}{3n},+\infty\right]\subset\textrm{spec}(\lambda_n)\quad\text{for every $n\ge3$},
$$
leading to a partial resolution of Problem \ref{prob dio exp}. We refer the reader to \cite[\S 2.3]{BY} for a Hausdorff dimensional version of the above problem, and an exhaustive list of references for historic progress in the area. 

Corollary 2.10 in \cite{BY} was the first instance of closing the gap in the spectrum of $\lambda_n$ from the other end, close to the Dirichlet exponent $1/n$. 
Quite pleasantly, Corollary \ref{cor dio exp} not only lets us improve their exponent, but also provide a clean and explicit range for the spectrum.

\begin{corollary}
\label{cor dio exp}
Let $n\geq 2$ be an integer and let $\tau\geq \frac{1}{n}$ satisfy
\begin{equation}
    \label{eq spec tau cond}
    \tau<\frac{2n^2+n+2}{(n^2+n+1)(2n-1)}.
\end{equation}
Let $\mathcal{C}$ be a smooth curve in $\mathbb{R}^n$ which is nondegenerate everywhere except possibly on a set of Hausdorff dimension $\leq \frac{n+1}{\tau+1}-n+1$. Then
$$\dim (\mathcal{C}\cap\mathcal{S}_n(\tau))=\frac{n+1}{\tau+1}-n+1.$$
In particular, for every $n\geq 3$, we have
\begin{equation}
    \label{eq spec}
    \left[\frac{1}{n}, \frac{1}{n}+\frac{n+1}{n(2n-1)(n^2+n+1)}\right)\subset \mathrm{ spec }(\lambda_n).
\end{equation}
\end{corollary}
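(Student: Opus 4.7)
The plan is to derive Corollary~\ref{cor dio exp} directly from Corollary~\ref{cor lbound dim} by specializing to curves, then apply the resulting dimension formula to the Veronese curve together with a Hausdorff measure lower bound to pin down the spectrum.

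\textbf{First claim.} I would specialize Corollary~\ref{cor lbound dim} to $d = 1$ and $l = n$ (the natural nondegeneracy order for a smooth nondegenerate curve in $\mathbb{R}^n$), giving $\alpha = 1/[(2n-1)(n+1)]$. A short algebraic simplification of condition~\eqref{eq tau cond} yields
\[
\frac{3\alpha+1}{(2n-1)\alpha+n} = \frac{2n^2+n+2}{(2n-1)(n^2+n+1)},
\]
which is exactly~\eqref{eq spec tau cond}. Corollary~\ref{cor lbound dim} then produces $\dim(\mathcal{C} \cap \mathcal{S}_n(\tau)) = \frac{n+1}{\tau+1} - n + 1$ in the stated range, establishing the first part.

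\textbf{Spectrum claim.} For the final assertion, I would apply the first claim to the Veronese curve $\mathcal{V}_n = \{(x, x^2, \ldots, x^n) : x \in I\}$ for a bounded interval $I$. Computing the derivatives of the parametrization directly shows, via a Vandermonde-type determinant, that $\mathcal{V}_n$ is $n$-nondegenerate everywhere, so the exceptional set is empty. Under the bi-Lipschitz parametrization $\gamma(x) = (x, x^2, \ldots, x^n)$, the definition of $\lambda_n$ gives the nested containments
\[
\{x \in I : \lambda_n(x) > \tau\} \subseteq \gamma^{-1}(\mathcal{S}_n(\tau)) \subseteq \{x \in I : \lambda_n(x) \geq \tau\},
\]
each of Hausdorff dimension $f(\tau) := \frac{n+1}{\tau+1} - n + 1$ by the first claim. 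To place a given $\tau_0$ in the stated interval into $\mathrm{spec}(\lambda_n)$: the endpoint $\tau_0 = 1/n$ is handled by Dirichlet's theorem together with the existence of badly approximable points. For $\tau_0 > 1/n$, one checks $\tau_0 < 1/(n-1)$ throughout the interval, so $s := f(\tau_0) \in ((n-1)/n, 1)$, and the Jarn\'ik series~\eqref{conv2} collapses to the harmonic series at this critical $s$. Since $\mathcal{V}_n$ is analytic and nondegenerate, Theorem~\ref{t2.5} produces $\mathcal{H}^s(\mathcal{V}_n \cap \mathcal{S}_n(\tau_0)) = \infty$. On the other hand, $\{x : \lambda_n(x) > \tau_0\} = \bigcup_{k \geq 1} \{x : \lambda_n(x) \geq \tau_0 + 1/k\}$ is a countable union of sets of dimension $f(\tau_0 + 1/k) < s$, hence $\mathcal{H}^s$-null. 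The set-theoretic difference is therefore nonempty and provides an $x$ with $\lambda_n(x) = \tau_0$.

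The main subtlety lies in the last step: the dimension equality alone cannot distinguish $\{\lambda_n \geq \tau_0\}$ from $\{\lambda_n > \tau_0\}$, since both have Hausdorff dimension $f(\tau_0)$. The required asymmetry appears only at the Hausdorff $s$-measure level, where Theorem~\ref{t2.5} supplies an infinite lower bound on the former (via the divergence part of Jarn\'ik for analytic nondegenerate manifolds applied to $\mathcal{V}_n$), while the first claim of our corollary forces the latter to be $\mathcal{H}^s$-null.
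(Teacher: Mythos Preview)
Your approach matches the paper's: specialize Corollary~\ref{cor lbound dim} to curves with $d=1$ and $l=n$, verify that \eqref{eq tau cond} becomes \eqref{eq spec tau cond}, and then run a standard Hausdorff-measure argument on the Veronese curve for the spectrum. The paper leaves the spectrum step entirely implicit (``rather standard''), and your version via Theorem~\ref{t2.5} at the critical exponent is exactly the kind of argument intended.

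One small gap: in the first claim you set $l=n$ as ``the natural nondegeneracy order'', but the hypothesis only says $\mathcal{C}$ is nondegenerate, i.e.\ $l$-nondegenerate for \emph{some} $l$, and $l$ may exceed $n$ at individual points (e.g.\ $t\mapsto(t,t^3)$ at $t=0$ is $3$- but not $2$-nondegenerate). The paper closes this with a proposition from \cite{BY}: if a curve is $l$-nondegenerate at $x_0$, then it is $n$-nondegenerate on a punctured neighbourhood of $x_0$ minus a countable set. Since countable sets have Hausdorff dimension zero, the exceptional-set hypothesis of Corollary~\ref{cor lbound dim} is preserved with $l=n$. This does not affect your spectrum argument, where the Veronese curve is $n$-nondegenerate everywhere by the Vandermonde computation you indicate.
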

For comparison, \cite[Corollary 2.16]{BY} states that for every $n\geq 3$, $\left[\frac{1}{n}, \frac{1}{n}+\frac{\delta_n}{n}\right]\subset \mathrm{ spec }(\lambda_n)$, where $\delta_n\in \left(\frac{1}{2n^2+6n}, \frac{1}{2n^2+5n}\right)$, which is implicitly defined in their equation (2.21). Since 
$$\frac{1}{2n^2+5n}<\frac{n+1}{(2n-1)(n^2+n+1)},$$ the range in \eqref{eq spec} is strictly larger.

\section{Outline of the Proof and Novelties}
Fourier analytic techniques, 
e.g. the circle method,
have long been used 
to count rational points 
on or near manifolds.
Further, quantitative non-divergence 
estimates in the space of lattices are powerful tools from
homogeneous dynamics which
have found versatile application in the area.
These methods are classical and have played an important role 
in Diophantine analysis
for decades now.
The works of 
Kleinbock and Margulis 
\cite{KM 1998, KM 1999}
from the late 90's
are fine testimonies to their strength.

To the best of our knowledge,
Fourier analysis 
and homogeneous dynamics have 
been used in a mutually 
exclusive manner 
when counting rational points.
The novelty of this work lies in 
a successful marriage between the two types 
of techniques. Further,
we do not (explicitly) use the Dani correspondence
and the geometry of numbers,
which is central to the work of 
Beresnevich and Yang \cite{BY}
and their use of quantitative
non-divergence estimates. 
In contrast, 
the foundation of our approach 
is a re-interpretation 
of quantitative 
non-divergence in terms of the 
Fourier transform of the surface measure associated with the submanifold under consideration. 
To explain this properly, we introduce more notation.

Let $\nu$ be the surface measure on 
$\cM$ (the push-forward of the Lebesgue measure under $\bfF$). We shall also weigh 
the surface measure 
by $\Omega$, but 
suppress it from the notation for ease of exposition. 
The Fourier transform 
of $\nu$ is defined as
\begin{equation*}
\widehat{\nu}(\boldsymbol{\xi} ):=
\int_{\bR^n} e(-\langle
\bfz, \boldsymbol{\xi}  \rangle )\, \rd\nu(\bfz) 
=\int_{\bR^d} e(-\langle (\bfx, \bfF(\bfx)), \boldsymbol{\xi}\rangle )\, \Omega(\bfx)\,\rd\bfx.
\end{equation*}
First we demonstrate, 
by Poisson summation 
and partial integration, that 
\begin{equation}\label{eq Fourier transform}
N_{\bfF}^{\bft}(\delta,Q) - c_\bft \delta^m Q^{d+1} 
\approx 
\delta^{m} Q^{d+1}
\sum_{\substack{(\bfv,c) \in \bZ^{n+1}\\
\Vert \bfv\Vert_{\infty},\vert c\vert  \sim \delta^{-1}}} 
\int_\bR \widehat{\nu}(Qy\bfv) e(-Qcy) y^d \omega(y) \rd y.
\end{equation}
To show that
the right hand side above
is an error term 
(in the relevant ranges of $\delta$), 
we proceed as follows.
For each $\bfv$, the size of 
the Fourier transform
$\widehat{\nu}(\boldsymbol{\bfv})$
is determined by a small set of 
points on $\cM$. 
In fact, the underlying 
oscillatory integral, given by
$$\widehat{\nu}(Qy\bfv)=
\int_{\bR^d} e(-\langle (\bfx, \bfF(\bfx)), Qy\bfv \rangle )\, \Omega(\bfx)\rd\bfx,$$
decays rapidly except
for those values of $\bfx$ for which 
\begin{equation}\label{eq nabla}
    \vert \nabla_{\bfx}
\langle (\bfx, \bfF(\bfx)),
\bfv \rangle \vert 
\end{equation} 
is small. For our purposes it 
would be ideal if those
parts of the manifold were
just removed.
Our argument does
precisely that.
For us, quantitative non-divergence 
in the space of lattices
is a device that estimates the measure
of those critical regions on $\cM$ 
that prevent $\widehat{\nu}(\bfv Qy)$
from decaying rapidly,
whenever the integer vector 
$\bfv\neq \bzero$ is of a given norm. To make use of this, we also capitalize
on the oscillation in the $y$ variable.
Observe that the integral
in $y$ decays rapidly unless 
\begin{equation}\label{eq dist int}
    \vert \langle (\bfx, \bfF(\bfx)),
\bfv \rangle - c \vert 
\end{equation}
is quite small.

It turns out  
Bernik, Kleinbock, and Margulis
have provided a quantitative 
non-divergence estimate (see Theorem \ref{thm quant non div}) that 
controls the measure of
the ``critical'' points $\bfx$
for which \eqref{eq nabla}
and \eqref{eq dist int}
are simultaneously small. 
The contribution 
of these regions to counting 
rational points will be handled 
by measure theoretic 
and mild structural 
information: 
we can cover this sublevel set
by a number of small balls.
Actually, we use smooth functions 
whose support are balls. 
To construct a
suitable partition of unity, we
use arguments inspired by 
Beresnevich and Yang \cite{BY},
based on Taylor expansions 
and Besicovitch's covering theorem.

Outside of the critical sublevel set, 
\eqref{eq nabla}
or \eqref{eq dist int}
is somewhat large, as dictated by the choice of certain parameters. 
To exploit this,
we use an integration by parts argument, which 
in our situation is
non-standard in two aspects. 
Firstly, because we partition
the relevant unity function into smooth pieces
with small support, 
we encounter amplitude functions 
with large (``rough'') derivatives. 
This makes the integration
by parts rather delicate and 
the correct choice of the parameters 
is essential here. 
The second issue
is that we don't have precise information about ``which'' gradient (whether in $\bfx$ or $y$) is large. In other words, the variables
$\bfx$ and $y$ play distinguished roles and we need a careful decomposition to handle both cases separately.

\subsubsection*{Structure of the paper} In Section \ref{preliminaries} we state useful lemmata as well as a version of quantitative non-divergence, which will be used later in the proof of our main theorems. Section \ref{sublevelFA} is the major technical part of this article. We first perform the sub-level set decomposition (in \S\ref{subsec sublevel}), and then express the counting function $N_\bfF^{\bft}(\delta,Q)$ Fourier-analytically as a sum of oscillatory integrals (in \S \ref{subsec fourier expansion}). The heart of the section is in \S\ref{subsec ibp}, in which we estimate the contribution of these integrals using a delicate integration by parts argument. With these estimates in hand, we complete the proof of Theorems \ref{thm main asymp bounds}-\ref{thm main upper bounds} in Section \ref{parameters}. Finally, in Section \ref{proofsapplications} we give details for the proofs of our applications to Diophantine approximation.\par

Since $\bfF$ and $\bft$
are fixed throughout 
our arguments, we shall
not emphasize 
the dependence of 
$N_\bfF^{\bft}(\delta,Q)$
on these parameters --- 
aside from stating the theorems. 
Thus, for most 
of the manuscript we
shall simply write $N(\delta,Q)$
instead of $N_\bfF^{\bft}(\delta,Q)$.\par

\section{Preliminaries}\label{preliminaries}
We use the following 
truncated Fourier expansion.
\begin{lemma}\label{lem: truncated Poisson}
Let $w: \bR \rightarrow \bR$ 
be smooth and $\supp(w)\subseteq(-1,1)$. Assume that $w(x)=w(-x)$ for all $x\in \mathbb{R}$. 
If $\eta,A>0$,
then 
$$
w\Big(\frac{\Vert x\Vert}
{\delta}\Big)
= 
\delta \widehat{w}(0)
+ \delta  
\sum_{1 \leq \vert j \vert 
\leq Q^{\eta}/\delta} 
\widehat{w}(\delta j) e(jx)
+ O_{w,\eta,A}(Q^{-A})
$$
uniformly in $\delta \in (0,1/2)$ 
and $x\in \bR$.
\end{lemma}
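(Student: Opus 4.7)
The plan is to recognize the left-hand side as a smooth $1$-periodic function of $x$ and apply its absolutely convergent Fourier expansion, then truncate using the Schwartz decay of $\widehat{w}$.

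First, I would define $g(x):= w(\Vert x \Vert /\delta)$ and verify that $g\in C^{\infty}(\R/\bZ)$. Periodicity is immediate from $\Vert x+1\Vert = \Vert x \Vert$. For smoothness near $x=0$, one has $\Vert x \Vert = \vert x \vert$, and since $w$ is even, $w(\vert x\vert/\delta)$ agrees with the smooth function $w(x/\delta)$ on a neighborhood of $0$. For smoothness near $x=1/2$, the hypotheses $\delta <1/2$ and $\supp(w)\subset (-1,1)$ force $g\equiv 0$ on a whole neighborhood of $1/2$, so no pathology arises at the corner of $\Vert \cdot \Vert$. By translation, $g$ is smooth everywhere.

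Second, I would compute the Fourier coefficients
$$
\widehat{g}(j) = \int_0^1 w\Big(\frac{\Vert x\Vert}{\delta}\Big)\, e(-jx)\,\rd x.
$$
Unfolding to the symmetric interval $[-1/2,1/2]$ (via the substitution $x\mapsto 1-x$ on $[1/2,1]$, together with $e(-j)=1$) and using evenness to write $w(\vert x\vert /\delta) = w(x/\delta)$, and then extending the integration to all of $\R$ because $w(\cdot/\delta)$ is supported in $(-\delta,\delta)\subset (-1/2,1/2)$, the substitution $y=x/\delta$ yields
$$
\widehat{g}(j) = \int_{\R} w(x/\delta)\, e(-jx)\,\rd x = \delta\, \widehat{w}(\delta j).
$$
By the smoothness established in the first step, the Fourier series of $g$ converges to $g$ absolutely and uniformly, giving the untruncated identity
$$
w\Big(\frac{\Vert x \Vert}{\delta}\Big) = \sum_{j\in \bZ} \delta\, \widehat{w}(\delta j)\, e(jx).
$$

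Third, I would truncate at $\vert j \vert \leq Q^{\eta}/\delta$ and control the tail by exploiting that $w$ is Schwartz, hence so is $\widehat{w}$: for every $N\in \bN$ there is a constant $C_N=C_N(w)$ with $\vert \widehat{w}(\xi)\vert \leq C_N (1+\vert \xi \vert)^{-N}$. Comparing to an integral and setting $M=Q^{\eta}/\delta$, the tail is bounded by
$$
\delta \sum_{\vert j\vert > M} C_N (1+\delta\vert j\vert)^{-N} \ll_N (1+\delta M)^{-(N-1)} = (1+Q^{\eta})^{-(N-1)} \ll Q^{-\eta(N-1)}.
$$
Choosing $N$ so that $\eta(N-1)\geq A$ gives the desired $O_{w,\eta,A}(Q^{-A})$ remainder, uniformly in $x\in\R$ and $\delta\in (0,1/2)$.

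The only real obstacle is the smoothness check in step one; this is precisely where the $\delta<1/2$ assumption (together with $\supp w\subset (-1,1)$ and the symmetry of $w$) is used, ensuring the Fourier series converges pointwise. The rest is a routine Schwartz-tail estimate.
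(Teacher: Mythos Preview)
Your proof is correct and follows essentially the same route as the paper's: recognise $w(\Vert x\Vert/\delta)$ as the $1$-periodisation of $w(\cdot/\delta)$, apply Poisson summation (equivalently, the Fourier expansion you write down), change variables to identify the coefficients as $\delta\,\widehat{w}(\delta j)$, and then truncate using the rapid decay of $\widehat{w}$. The paper's sketch additionally mentions a dyadic decomposition before the partial-integration step; you bypass this by invoking the Schwartz bound $\vert\widehat{w}(\xi)\vert\ll_N(1+\vert\xi\vert)^{-N}$ directly and summing the tail, which is a harmless streamlining.
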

\begin{proof}
This is a consequence of consecutive applications of
Poisson summation,
a change of variables,
dyadic decomposition,
and partial integration. 
See \cite[Lemma A.3]{ST} for the details.
\end{proof}
Further, we need the following (weak) variant of 
Besicovitch's Covering Theorem (see for example \cite[Theorem 2.7]{Mat}) ensuring that any 
bounded set can be covered economically 
by balls of a given radius, in the sense that
the balls do not overlap by more than a dimensional constant.
 \begin{theorem}[]\label{thm: Besicovitch}
 Let $r\in (0,1)$, and
 $n\geq 1$ be an integer.
 Then there exist constants $D_n$ and $D_n'$, which only depend on $n$ with the following property. For
 any bounded subset $\sA\subset \bR^n$,
 there exists a family $\mathrm{Bes}(\sA,r)$
 of balls $\sB(\bfx,r)$, with $\bfx \in \sA$,
 so that 
 $$
 \mathds{1}_\sA (\bfy) \leq 
 \sum_{\sB(\bfx,r) \in 
 \mathrm{Bes}(\sA,r) } 
 \mathds{1}_{\sB (\bfx,r)}(\bfy)
 \leq D_n
 \qquad \mathrm{for\, all\,}\bfy \in \mathbb{R}^n.
 $$
 Furthermore, we have
  $$
\sum_{\sB(\bfx,r) \in 
 \mathrm{Bes}(\sA,r) } 
 \mathds{1}_{\sB (\bfx,2r)}(\bfy)
 \leq D_n'
 \qquad \mathrm{for\, all\,}\bfy \in \mathbb{R}^n.
 $$
 
\end{theorem}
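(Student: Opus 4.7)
The plan is to construct the family by a standard greedy procedure that yields pairwise disjoint half-radius balls, and then read off both multiplicity bounds from a single volume comparison.

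For the construction, I would apply Zorn's lemma (or an iterative selection, exploiting that $\sA$ is bounded) to produce a maximal family $\{\bfx_i\}_i \subset \sA$ with the property that the half-radius balls $\sB(\bfx_i, r/2)$ are pairwise disjoint. Maximality forces that for every $\bfy \in \sA$ there exists some index $i$ with $\Vert \bfy - \bfx_i \Vert_2 < r$: otherwise $\sB(\bfy, r/2)$ would be disjoint from all the $\sB(\bfx_i, r/2)$, contradicting maximality. This yields the desired lower bound $\mathds{1}_\sA(\bfy) \leq \sum_{i} \mathds{1}_{\sB(\bfx_i, r)}(\bfy)$, and I would declare this to be the family $\mathrm{Bes}(\sA, r)$.

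For both multiplicity upper bounds, fix $\bfy \in \bR^n$ and a radius parameter $\rho \in \{r, 2r\}$, and suppose $\bfy$ lies in $k$ of the balls $\sB(\bfx_{i_j}, \rho)$ for $1 \leq j \leq k$. Equivalently, $\Vert \bfx_{i_j} - \bfy \Vert_2 < \rho$ for each $j$, and hence $\sB(\bfx_{i_j}, r/2) \subset \sB(\bfy, \rho + r/2)$. Since the half-balls are pairwise disjoint by construction, a volume comparison gives
$$k \cdot \mathrm{vol}\bigl(\sB(\mathbf{0}, r/2)\bigr) = \sum_{j=1}^{k} \mathrm{vol}\bigl(\sB(\bfx_{i_j}, r/2)\bigr) \leq \mathrm{vol}\bigl(\sB(\bfy, \rho + r/2)\bigr),$$
whence $k \leq (1 + 2\rho/r)^n$. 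Taking $\rho = r$ delivers $D_n = 3^n$ and $\rho = 2r$ delivers $D_n' = 5^n$.

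There is no substantive obstacle here, as this is essentially the textbook constant-radius Besicovitch argument; the only mild subtlety is ensuring that the pointwise multiplicity estimates hold on all of $\bR^n$ rather than merely on $\sA$, but that is automatic from the volume argument above since $\bfy$ is arbitrary.
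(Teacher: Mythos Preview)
Your argument is correct. The paper does not actually prove this statement: its ``proof'' consists of a single citation to Mattila's monograph \cite[Theorem~2.7]{Mat}, which establishes the full Besicovitch covering theorem with variable radii. Your approach is genuinely different in that you give a direct, self-contained argument exploiting the special feature here that all balls have the \emph{same} radius $r$. This lets you replace the delicate geometric combinatorics of the general Besicovitch theorem with a one-line volume comparison, and even yields explicit constants $D_n = 3^n$ and $D_n' = 5^n$. The trade-off is that the paper's citation covers the general variable-radius case for free, while your argument is tailored to constant radius---but since only the constant-radius version is ever used in the paper (in Lemma~\ref{le smooth cut off}), your simpler route is entirely adequate and arguably preferable. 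One cosmetic remark: invoking Zorn's lemma is harmless but unnecessary, since boundedness of $\sA$ together with disjointness of the half-radius balls forces the family to be finite, so a greedy iterative selection terminates.
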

\begin{proof}
See Mattila's monograph 
\cite[Theorem 2.7]{Mat}.
\end{proof}

Let $h\in C_c^\infty (\R^n)$ and $\psi\in C^\infty(\R^n)$ be real valued such that
$\nabla\psi\neq 0$ on the support of $h$. 
We define the differential operator $L$
via
\begin{equation}
 \label{def Lop} 
 L h
= 
\text{div} 
\Big( \frac{h \nabla \psi }{|\nabla\psi|^2}
\Big).
\end{equation}
Then the formal adjoint operator is given by 
$ L^*=
- \langle \frac {\nabla\psi}{|\nabla\psi|^2}, 
\nabla \rangle $. Importantly, it has the property
that
$(2\pi \mathrm{i} \lambda) ^{-1}L^* e(\lambda \psi)
=e(\lambda \psi)$.

We let $L^0 h:=h$ and put
$L^{\circ N} h:=L \circ (L^{\circ(N-1)} h)$
for any integer $N\geq 1$.
We then have by integration by parts 
$$
\int_{\R^n} e(\lambda \psi(\mathbf{z})) h(\mathbf{z}) d\mathbf{z} = 
(2\pi \mathrm{i}\lambda )^{-N} 
\int_{\R^n} e(\lambda \psi(\mathbf{z})) (L^{\circ N} h)(\mathbf{z}) d\mathbf{z} 
$$ for $N\geq 1$.
 
The goal is now to estimate the right hand side efficiently, 
for which we need control on the function $L^{\circ N} h$. 
First we introduce terminology inspired by
Anderson, Cladek, Pramanik, and Seeger \cite[Appendix A]{ACPS}.

\begin{definition} \label{def typeABterms}
Let $j\geq 0$ be an integer.\\ 
(i) A smooth function $g$ is said to be 
of {\emph{amplitude type and order j}}
if 
$g = h_j/|\nabla\psi|^j$ 
where $h_j$ is a (multi-variable) derivative 
of order $j$ of $h$. \\
(ii) 
A smooth function $g$ is said to be 
of {\emph{phase type and order j}} 
if it is equal to $1$, for $j=0$,
or $g= \psi_{j+1}/|\nabla\psi|^{j+1}$ 
for $j\geq 1$ 
where $\psi_{j+1}$ is a (multi-variable) 
derivative of order $j+1$ of $\psi$. 
\end{definition}

\begin{lemma}[Partial Integration]\label{lem partial}
Let $n,N\geq 0$ be integers, $h\in C_c^N (\R^n)$ and $\psi\in C^{N+1}(\R^n)$. Suppose that $\nabla \psi\neq 0$ on the support of $h$. Then 
there exists an integer $V=V(N,n)\geq 1$ so that
$$
L^{\circ N} h = \sum_{\nu=1}^{V} c_{N,\nu} h_{N,\nu}.$$ 
Here $c_{N,\nu}$ are complex coefficients, and each $h_{N,\nu}$ is of the form
$$
P(\tfrac{\nabla \psi}{|\nabla\psi|}) 
\beta_{\mathrm{amp}}
\prod_{l=1}^M \gamma_l,$$
where $P$ is a polynomial of $n$ variables 
(independent of $h$ and $\psi$), $\beta_{\mathrm{amp}}$ 
is of amplitude type with order $j_{\mathrm{amp}}$
for some $j_{\mathrm{amp}}\in \{0,\dots, N\}$, while
each $\gamma_l$ 
is of phase type and order $\kappa_l\geq 1$. We also have
\begin{equation}
    \label{eq order}
    j_{\mathrm{amp}}+\sum_{l=1}^M \kappa_l=N.
\end{equation} The terms $P, \beta_{\mathrm{amp}}, \gamma_l$ 
depend on $\nu$.
\end{lemma}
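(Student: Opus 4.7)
I will argue by induction on $N$, proving simultaneously that $L^{\circ N}h$ admits the claimed decomposition and that the order identity \eqref{eq order} holds. The base case $N=0$ is immediate: take $V=1$, $P\equiv 1$, $M=0$ and $\beta_{\mathrm{amp}}=h/|\nabla\psi|^0=h$, which is of amplitude type and order $0$, so $j_{\mathrm{amp}}+\sum_l\kappa_l=0$. For the inductive step, by linearity it is enough to apply $L$ to a single summand $u=P(\nabla\psi/|\nabla\psi|)\,\beta_{\mathrm{amp}}\prod_{l=1}^M \gamma_l$ of total order $N$ and check that every resulting piece retains the required structure with total order $N+1$. Expanding,
\[
Lu=\frac{\langle\nabla u,\nabla\psi\rangle}{|\nabla\psi|^2}+u\cdot\nabla\cdot\frac{\nabla\psi}{|\nabla\psi|^2},
\]
reduces the task to two separate computations.

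The second summand is handled by the direct calculation
\[
\nabla\cdot\frac{\nabla\psi}{|\nabla\psi|^2}=\sum_k\frac{\partial_k^2\psi}{|\nabla\psi|^2}-2\sum_{k,l}\frac{\partial_k\psi}{|\nabla\psi|}\cdot\frac{\partial_l\psi}{|\nabla\psi|}\cdot\frac{\partial_k\partial_l\psi}{|\nabla\psi|^2},
\]
which is, term by term, a polynomial in the components of $\nabla\psi/|\nabla\psi|$ multiplied by a phase-type factor of order $1$; multiplying $u$ by this expression leaves $\beta_{\mathrm{amp}}$ and the existing $\gamma_l$'s intact while introducing one additional $\gamma$-factor of order $1$, raising the total order from $N$ to $N+1$. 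For the first summand, I apply the product rule to $\sum_k(\partial_k u)(\partial_k\psi/|\nabla\psi|^2)$ and case-analyse which factor of $u$ the derivative $\partial_k$ hits. The pivotal identity
\[
\frac{\partial_k|\nabla\psi|}{|\nabla\psi|^2}=\sum_l\frac{\partial_l\psi}{|\nabla\psi|}\cdot\frac{\partial_k\partial_l\psi}{|\nabla\psi|^2}
\]
guarantees that differentiating any negative power $|\nabla\psi|^{-r}$ and dividing by one further $|\nabla\psi|$ yields a polynomial factor in $\nabla\psi/|\nabla\psi|$ together with a phase-type-order-$1$ factor. Combined with the observations that $\partial_k h_j/|\nabla\psi|^{j+1}$ is amplitude-type of order $j+1$ and that $\partial_k\psi_{\kappa+1}/|\nabla\psi|^{\kappa+2}$ is phase-type of order $\kappa+1$, each of the three subcases---derivative on $P$, on $\beta_{\mathrm{amp}}$, or on some $\gamma_l$---produces summands of the required form in which either $j_{\mathrm{amp}}$ or one of the $\kappa_l$'s increases by one, or a fresh phase-type-order-$1$ factor is adjoined. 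In every scenario the total order rises by exactly one, as needed. Since the polynomials that arise depend only on $n$ (not on $h$ or $\psi$), the structural form in the statement is preserved.

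The count $V(N+1,n)$ satisfies a trivial recursion, because each summand splits into $O(n(M+2))$ new summands, yielding a bound depending only on $N$ and $n$. The main obstacle is purely organisational: one must set up the invariants of \emph{amplitude type} and \emph{phase type} precisely as in Definition~\ref{def typeABterms}, so that the two elementary operations---taking a partial derivative of a factor, and dividing by a power of $|\nabla\psi|$---always shift the bookkeeping in a controlled way. Once the identity for $\partial_k|\nabla\psi|/|\nabla\psi|^2$ is in hand, the induction is a mechanical, if tedious, verification.
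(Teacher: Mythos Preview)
Your proposal is correct and follows essentially the same approach as the paper: the paper's proof consists of a single sentence invoking ``multi-dimensional partial integration, with a careful book keeping, and induction on $N$'' together with a reference to \cite[Lemma A.2]{ACPS}, and your argument is precisely the detailed execution of that induction, with the key identity for $\partial_k|\nabla\psi|/|\nabla\psi|^2$ driving the bookkeeping.
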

\begin{proof}
This follows from multi-dimensional partial integration, with a careful book keeping,
and induction on $N$; cf.
Anderson, Cladek, Pramanik, and Seeger 
\cite[Lemma A.2]{ACPS}.
\end{proof}
We make the following observation for subsequent use. Since $\kappa_l>0$ for each $l\leq M$, \eqref{eq order} implies that
\begin{equation}
    \label{eq jM sum}
    j_{\mathrm{amp}}+M\leq N.
\end{equation}

The following simple lemma will also be useful later.

\begin{lemma}\label{le rationals in a ball}
Let $q\in \N$ and $r >0$ be such that $qr \geq 1$. 
Then 
$$
\# (q^{-1}\Z^d \cap\sB(\bfc,r)) \leq (3 r q)^{d}
$$
for any choice of $\bfc \in \R^d$.
\end{lemma}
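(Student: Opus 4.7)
The plan is to reduce the problem to counting ordinary integer lattice points in a Euclidean ball, and then pass from the Euclidean norm to the sup norm so that the count factors coordinate-wise.

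First I would rewrite the set as
\[
q^{-1}\Z^d \cap \sB(\bfc,r)
= \tfrac{1}{q}\bigl\{ \bfa \in \Z^d : \Vert \bfa - q\bfc \Vert_2 < qr \bigr\}.
\]
Since the map $\bfa \mapsto \bfa/q$ is injective, the cardinality on the left equals the cardinality of the integer-point set on the right.

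Next, I would use the trivial inequality $\Vert \cdot \Vert_\infty \leq \Vert \cdot \Vert_2$ to enlarge the region to a box:
\[
\#\bigl\{ \bfa \in \Z^d : \Vert \bfa - q\bfc \Vert_2 < qr \bigr\}
\leq \#\bigl\{ \bfa \in \Z^d : \Vert \bfa - q\bfc \Vert_\infty < qr \bigr\}
= \prod_{i=1}^d \#\bigl\{ a \in \Z : |a - qc_i| < qr \bigr\}.
\]

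Finally, for each coordinate $i$, the number of integers lying in an open interval of length $2qr$ is at most $\lfloor 2qr \rfloor + 1 \leq 2qr + 1$. This is where the hypothesis $qr \geq 1$ is needed: under it, $2qr + 1 \leq 3qr$, so each factor is bounded by $3qr$ and the product is bounded by $(3qr)^d$. There is no real obstacle; the only subtlety is invoking $qr \geq 1$ at the last step to absorb the additive $+1$ into a clean multiplicative constant.
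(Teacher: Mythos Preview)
Your proof is correct and follows essentially the same approach as the paper: scale by $q$ to reduce to counting integer points in a box of side length $2qr$, bound each coordinate count by $2qr+1$, and use $qr\ge 1$ to absorb the $+1$ into $3qr$. The paper's version is slightly terser but the argument is identical.
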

\begin{proof}
The quantity in question is at most 
$$ 
\# \Big( \Z^d \cap \prod_{i\leq d}[qc_i-qr,qc_i+qr]\Big)
\leq (2qr+1)^d.
$$
By using $qr \geq  1$, 
the claim follows readily.
\end{proof}

To proceed, we use a 
theorem of Bernik, Kleinbock, 
and Margulis \cite[Theorem 1.4]{BKM}.
To this end, we need to introduce 
more notation.
Given a vector $\bfv \in \R^n$,
the function 
\begin{equation}\label{def Phi}    
\Phi_{\bfv}(\bfx):=
\langle (\bfx, \bfF(\bfx)), \bfv\rangle
\end{equation}
shall play a central role. 
For $0<\Delta \leq 1$, $K>0$ and $T\geq 1$ 
we define the sub-level set
\begin{equation}\label{def non osc set}
    \mathfrak{S}_{\Delta, K, T}:= \left\{
    \bfx\in \sU_d: 
    \,\mathrm{there\,exists}\, 
    \bfv\in \Z^{n} 
     \,\mathrm{so\,that}\,
    \begin{array}{l} \\
   \Vert\Phi_{\bfv}(\bfx) \Vert 
   < \Delta, \\
   \|\nabla \Phi_{\bfv}(\bfx)
   \|_\infty < K, \\
   0<\|\bfv\|_\infty < T
  \end{array}
    \right\}.
\end{equation}

\begin{theorem}[Quantitative Non-Divergence]
\label{thm quant non div}
Let $\bfx_0\in \sU_d$ and suppose 
$\bfF: \sU_d\to\R^n$ is $l$--nondegenerate 
at $\bfx_0$. For $\Delta\in (0,1]$, $K>0$, 
and $T\geq 1$, let $\fS_{\Delta, K,T}$
be as in \eqref{def non osc set}. If  
\begin{equation}
    \label{C1}
    \tag{C1}
    \Delta^n < K T^{n-1},
\end{equation}
then there exists a ball $\sB_0\subset\sU_d$ 
centred at $\bfx_0$ and a constant $E\ge1$ such that 
\begin{equation*}
\label{eq nondiv loc}
\mu_d(\fS_{\Delta, K,T}\cap \sB_0) \leq E \cdot  
( \Delta K T^{n-1})^{\frac{1}
{d(2l-1)(n+1)}}\mu_d(\sB_0).   
\end{equation*}

In particular, if $\bfF$ is $l$-nondegenerate in an open neighborhood of $\overline{\sU_d}$ and condition \eqref{C1} 
is satisfied, then it follows via a compactness argument that there exists some constant $C>0$ such that 
\begin{equation*}
\label{eq nondiv glob}
\mu_d(\fS_{\Delta, K,T}) \leq C \cdot  
( \Delta K T^{n-1})^{\frac{1}{d(2l-1)(n+1)}}.
\end{equation*}
\end{theorem}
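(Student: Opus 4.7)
My plan proceeds in two steps.

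First, the local version is essentially a verbatim restatement of \cite[Theorem 1.4]{BKM}. I would begin by matching notation: $\Phi_{\bfv}(\bfx)=\langle(\bfx,\bfF(\bfx)),\bfv\rangle$ plays the role of the affine function attached to $\bfv$ in the BKM setup, the three defining conditions of $\fS_{\Delta,K,T}$ translate verbatim to the Diophantine inequalities appearing in their theorem, and the $l$-nondegeneracy of $\bfF$ at $\bfx_0$ is precisely the hypothesis ensuring the ``good function'' framework there applies. Condition \eqref{C1} is the standard compatibility hypothesis under which their theorem produces a nontrivial bound, and the exponent $\frac{1}{d(2l-1)(n+1)}$ is the one they extract. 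With this dictionary in place, the local estimate holds with $\sB_0$ and $E$ as furnished by \cite{BKM}; I would not attempt to reprove this deep result from scratch.

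Second, I would bootstrap the local statement to a global one via compactness. Because $\bfF$ is $l$-nondegenerate on an open neighborhood $\sV\supset\overline{\sU_d}$, every $\bfx_0\in\overline{\sU_d}$ yields, via the local version, a ball $\sB_0(\bfx_0)\subset\sV$ and a constant $E(\bfx_0)\ge 1$. The open cover $\{\sB_0(\bfx_0):\bfx_0\in\overline{\sU_d}\}$ of the compact set $\overline{\sU_d}$ admits a finite subcover $\sB_0(\bfx_1),\ldots,\sB_0(\bfx_M)$. Summing the local bound over this cover yields $\mu_d(\fS_{\Delta,K,T})\le C\,(\Delta K T^{n-1})^{1/(d(2l-1)(n+1))}$, where $C$ absorbs $M$, $\max_i E(\bfx_i)$, and $\max_i \mu_d(\sB_0(\bfx_i))$, all of which depend only on $\bfF$.

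The only mildly delicate point I anticipate is ensuring that the radius of $\sB_0(\bfx_0)$ and the constant $E(\bfx_0)$ can be chosen in a way that behaves controlledly as $\bfx_0$ varies, so that a finite subcover of $\overline{\sU_d}$ genuinely produces a uniform constant. This is implicit in \cite[Theorem 1.4]{BKM}, since both quantities depend only on the local nondegeneracy data of $\bfF$, which are uniformly controlled on the compact set $\overline{\sU_d}$ by the openness of the nondegeneracy hypothesis. Beyond this bookkeeping I foresee no real obstacle, the statement being effectively a packaging of BKM together with a standard finite covering argument.
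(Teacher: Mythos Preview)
Your proposal is correct and matches the paper's approach exactly: the paper's entire proof is the single line ``See \cite[Theorem 1.4]{BKM},'' with the compactness argument for the global statement already flagged in the theorem's own wording. Your dictionary-matching and finite-subcover bookkeeping are precisely the packaging that is being implicitly invoked.
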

\begin{proof}
See \cite[Theorem 1.4]{BKM}.
\end{proof}
Note that the measure bounds provided by
Theorem \ref{thm quant non div} 
are non-trivial if
\begin{equation}
    \label{C1b}
    \Delta K T^{n-1} \leq 1.
\end{equation}
In our application we will set 
\begin{equation}
    \label{eq Tdef}
    T:=2C_\bfF Q^{\eta} /\delta,
\end{equation}
where $C_{\bfF}$ a constant only depending on $\bfF$ (to be introduced in Lemma \ref{le error small}), and $\eta>0$ is a fixed parameter. 
We shall keep track of the exact dependence on $\eta$. 
For a parameter-vector
\begin{equation}\label{def parameter vec}
    \bfp:=(\Delta,K)\in 
    (0,1/2]\times (0,1],
\end{equation}
we define 

\begin{equation}
\label{eq defSp}
\mathfrak{S}_\bfp := \mathfrak{S}_{\bfp,T},
\end{equation}
with $T=2 C_{\bfF} Q^{\eta}/\delta$.

\section{Sub-level Set Decomposition and Fourier Analysis}
\label{sublevelFA}
\subsection{Sub-level Set Decomposition}
\label{subsec sublevel}
The objective of this subsection is to partition the set $\mathscr{U}_d$, via a smooth partition of unity, into two regions. We aim to achieve this by decomposing a weight supported on the unit ball into a sum of two smooth functions: one which essentially lives on the sub-level set $\mathfrak{S}_\bfp$ whose measure can be estimated by Theorem \ref{thm quant non div}, and the second supported on a complementary set where Fourier analytic techniques can be employed. To this effect, we construct a smooth cut-off function associated with $\mathfrak{S}_\bfp$.

\begin{lemma}[Smooth cut-off]\label{le smooth cut off}
Let $\fS_\bfp$ be as in (\ref{eq defSp}).
If \begin{equation}
    \tag{C2}
    \label{C2}
    0<r\leq  Q^{-2\eta}\min(K\delta,\Delta/K),
\end{equation}
then there exists a smooth function 
$W_{r}:=W_{r,\fS_\bfp}:\sU_d \rightarrow [0,1]$
with the following properties. 
\begin{enumerate}
    \item On $\fS_\bfp$, the function $W_r$ is identically one.
    \item The function 
    $W_r$ is $r$-rough, that is
         $\Vert W_r^{(\balp)}\Vert_{\infty} 
         \ll_\balp r^{-\Vert\balp\Vert_1 }$ 
         for any multi-index $\balp \in \Z_{\geq 0}^d$.
    \item If $Q$ is large enough, then the support 
         of $W_r$ is contained in $ \fS_{2\bfp}$. Further, $\supp (W_r)$
         is contained in a union of $O(t_r)$ many closed balls of radius $r$ 
         where $t_r:=\mu_d(\fS_{2\bfp})r^{-d}$.
\end{enumerate}
\end{lemma}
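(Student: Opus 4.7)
My approach is a standard smooth partition-of-unity construction keyed to a Besicovitch cover of $\fS_\bfp$; the role of hypothesis \eqref{C2} will be to guarantee that enlarging the cover slightly does not escape from $\fS_{2\bfp}$.

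First, I would apply Theorem~\ref{thm: Besicovitch} to the bounded set $\fS_\bfp\subseteq\sU_d$ at radius $r/2$, producing a family of centres $\{\bfx_i\}_{i\in I}\subseteq\fS_\bfp$ whose balls $\sB(\bfx_i, r/2)$ cover $\fS_\bfp$ and whose doubles $\sB(\bfx_i,r)$ overlap at most $D'_d$ times pointwise. Fix once and for all a nonnegative template $\phi\in C_c^\infty(\R^d)$ with $\phi\equiv 1$ on $\sB(\bzero, r/2)$, $\supp\phi\subseteq \sB(\bzero,r)$, and $\|\phi^{(\balp)}\|_\infty\ll_\balp r^{-|\balp|}$, and a one-variable profile $\psi\in C^\infty(\R;[0,1])$ with $\psi\equiv 0$ on $(-\infty,1/2]$ and $\psi\equiv 1$ on $[1,\infty)$. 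Then I would set
$$
h(\bfy):=\sum_{i\in I}\phi(\bfy-\bfx_i),\qquad W_r(\bfy):=\psi\bigl(h(\bfy)\bigr).
$$
Properties (1) and (2) are then mechanical: on $\fS_\bfp$ each point lies in some $\sB(\bfx_i,r/2)$, hence $h\ge 1$ and $W_r\equiv 1$, yielding (1). The bounded-overlap estimate gives $|D^\balp h(\bfy)|\ll_\balp r^{-|\balp|}$ pointwise, and applying Fa\`a di Bruno to $W_r=\psi\circ h$ (whose outer derivatives are absolute constants) propagates this bound to $W_r$, yielding (2). The counting part of (3) follows by comparing volumes: granted the support claim below, $\bigcup_i\sB(\bfx_i,r/2)\subseteq \fS_{2\bfp}$ and the balls of radius $r/2$ have pointwise overlap at most $D'_d$, so $\#I\cdot r^d\ll \mu_d(\fS_{2\bfp})$, i.e.\ $\#I\ll t_r$.

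The only genuine content is the inclusion $\supp W_r\subseteq \fS_{2\bfp}$. Since $W_r$ vanishes off $\bigcup_i\overline{\sB(\bfx_i,r)}$, it suffices to prove $\overline{\sB(\bfx_i,r)}\subseteq \fS_{2\bfp}$ for every $i$. Fix a centre $\bfx_i$ with witness $\bfv\in\Z^n\setminus\{\bzero\}$ of $\ell^\infty$-norm less than $T$ satisfying $\|\Phi_\bfv(\bfx_i)\|<\Delta$ and $\|\nabla\Phi_\bfv(\bfx_i)\|_\infty<K$, and let $\bfy$ satisfy $\|\bfy-\bfx_i\|_2\le r$. Smoothness of $\bfF$ on a neighbourhood of $\overline{\sU_d}$ yields the uniform bound $\|\nabla^2\Phi_\bfv\|_\infty\ll \|\bfv\|_\infty\ll T$, so the mean value theorem produces
$$
\|\nabla\Phi_\bfv(\bfy)-\nabla\Phi_\bfv(\bfx_i)\|_\infty\ll Tr,\qquad |\Phi_\bfv(\bfy)-\Phi_\bfv(\bfx_i)|\ll (K+Tr)\,r.
$$
Hypothesis \eqref{C2} now controls both errors: combined with $T\ll Q^\eta/\delta$, the bound $r\le Q^{-2\eta}K\delta$ gives $Tr\ll Q^{-\eta}K$, while $r\le Q^{-2\eta}\Delta/K$ gives $Kr\ll Q^{-2\eta}\Delta$. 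For $Q$ sufficiently large (depending only on $\bfF$), these errors are strictly smaller than $K$ and $\Delta$, so $\|\nabla\Phi_\bfv(\bfy)\|_\infty<2K$, and by the triangle inequality for the distance-to-integers,
$$
\|\Phi_\bfv(\bfy)\|\le \|\Phi_\bfv(\bfx_i)\|+|\Phi_\bfv(\bfy)-\Phi_\bfv(\bfx_i)|<2\Delta.
$$
Combined with $\|\bfv\|_\infty<T$, this exhibits $\bfy\in \fS_{2\bfp}$ with the same witness $\bfv$, completing the proof.

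The main obstacle throughout is precisely this last geometric step; everything else reduces to bookkeeping for standard mollification. The two factors of $Q^{-\eta}$ in \eqref{C2} are calibrated exactly to absorb one power of $T=2C_\bfF Q^\eta/\delta$ in the gradient estimate and one factor of $K/\Delta$ in the value estimate, respectively, which is why both halves of the minimum in \eqref{C2} are needed.
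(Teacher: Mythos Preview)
Your proof is correct and follows essentially the same approach as the paper: a Besicovitch cover plus scaled bump functions, with the support inclusion $\supp W_r \subseteq \fS_{2\bfp}$ established via a first-order Taylor/mean value estimate controlled exactly by hypothesis \eqref{C2}. The only differences are cosmetic --- the paper covers all of $\sU_d$ (rather than $\fS_\bfp$), selects those balls whose $r$-enlargement meets $\fS_\bfp$, and normalises via a quotient $W_{\fS_\bfp}/W_{\mathrm{norm}}$ instead of composing with a profile $\psi$; consequently its centres need not lie in $\fS_\bfp$, which produces a displacement of $2r$ rather than your $r$ in the Taylor step, but this is immaterial once $Q$ is large.
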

\begin{proof}
Fix a smooth function 
$b: \R^d \rightarrow [0,1]$
with 
$\supp(b) \subset \sU_d$ and such that $b$ is equal to one on $\sB(\mathbf{0},1/2)$. Let
$
\fB:=\mathrm{Bes}(\mathscr{U}_d,r/2)
$ 
be a Besicovitch cover of $\sU_d$ obtained by applying
Theorem \ref{thm: Besicovitch}, and set
$
\fC:=\{ \sB(\bfc,r/2)\in \fB: \sB(\bfc,r)\cap \fS_\bfp \neq  \emptyset \}$.
To localise to a neighborhood of $\fS_{\bfp}$ 
(and then normalise thereafter),
we consider for $\bfx\in \sU_d$ the functions
$$
W_{\fS_{\bfp}}(\bfx):=
\sum_{\substack{\sB(\bfc,r/2)\in \fC}} 
b\Big( \frac{\bfx - \bfc}{r}\Big)
\,\, \mathrm{and} \,\,
W_{\mathrm{norm}}(\bfx):=
\sum_{\sB(\bfc,r/2)\in \fB} 
b\Big( \frac{\bfx - \bfc}{r}\Big).
$$

Notice that $W_{\mathrm{norm}} \geq 1$
on any ball in the set $\fB$.
Next, we ensure that $W_{\fS_{\bfp}}$ 
is actually equal to one
on $\fS_{\bfp}$.
This is an issue because the cover $\fC$ of $\fB$
can have balls overlapping, with large multiplicity.
To remedy this, we define on $\sU_d$ the function
$W_r:=W_{\fS_{\bfp}}/W_{\mathrm{norm}}$.
Clearly, $W_r$ is a smooth function on $\sU_d$.
Since $ \fC \subseteq \fB$ and $b$ 
is non-negative, we have $0\leq W_r \leq 1$.
Now if $\bfx\in \fS_\bfp$ then 
$$
W_{\mathrm{norm}}(\bfx)=
\sum_{\substack{ \sB(\bfc,r/2)\in \fB \\
\Vert \bfx -\bfc \Vert_2 < r }} 
b\Big( \frac{\bfx - \bfc}{r}\Big)
= 
\sum_{\substack{\sB(\bfc,r/2)\in \fC}} 
b\Big( \frac{\bfx - \bfc}{r}\Big)
= W_{\fS_{\bfp}}(\bfx).
$$
Hence, $W_r = 1$ on $\fS_{\bfp}$.

Let us now establish the second property.
By the construction of $\fB$ and the second part of Theorem \ref{thm: Besicovitch}, we know that
at any given point $\bfx$ the function
$W_{\mathrm{norm}}(\bfx)$ is a sum of at least one but 
at most $O(1)$
many $r$-rough functions.
Here the implicit constant in $O(1)$ depends
only on the dimension $d$. As a result, $W_{\mathrm{norm}}$
is $r$-rough. Similarly, we see that
$W_{\fS_{\bfp}}$ is $r$-rough. An application of the 
Leibnitz formula implies that $W_r$ is therefore $r$-rough, using the lower bound for $W_{\mathrm{norm}}$ on $\sU_d$.

Next, we verify the third property, where we start with proving that $\supp (W_r) \subset \fS_{2 \bfp}$.
Notice that any 
$\bfx\in \supp (W_r)$ can be written as 
$\bfx = \bfx_0 + \bfh$
with $\bfx_0\in \fS_{\bfp}$ and $\Vert \bfh \Vert_2 \leq 2r $.
By using a Taylor expansion for $F$ component-wise, we infer 
$$
F_i(\bfx) = F_i(\bfx_0) +
\langle \nabla F_i(\bfx_0),\bfh\rangle
+ O(r^2)
$$
where the implied constant depends only on $F_i$. For $\bfv \in \R^n$, recall the definition of $\Phi_{\bfv}$ from \eqref{def Phi}.
By multiplying with 
$v_i\ll Q^\eta/\delta$, summing 
over all $i\leq m$,
and by using $\bfx_0\in \fS_{\bfp}$, we conclude 
\begin{align*}
\Phi_{\bfv} (\bfx) 
& = \sum_{i\leq d} v_i x_i 
+ \sum_{1\leq i \leq m} v_{d+i} F_i(\bfx)\\
& = \sum_{i\leq d} v_i (x_{0,i} +h_i)
+ \sum_{1\leq i \leq m} v_{d+i} 
(F_i(\bfx_0) +
\langle \nabla F_i(\bfx_0),\bfh\rangle
+ O(r^2))\\
&=\Phi_{\bfv} (\bfx_0)+ \langle \nabla\Phi_{\bfv} (\bfx_0), \bfh \rangle+O(r^2Q^\eta/\delta)\\
& = \Phi_{\bfv} (\bfx_0) + 
O(K r+ r^2 Q^\eta/\delta).
\end{align*}
Here, the last equality follows from the fact that $\|\nabla \Phi_{\bfv}(\bfx_0)\|_\infty < K$, since $\bfx_0\in \fS_{\bfp}$. For the same reason, we also know that $\Vert\Phi_{\bfv}(\bfx_0) \Vert 
   < \Delta$. Now, the assumption on $r$ ensures 
$r^2 \del^{-1} \leq  rK \leq \Delta Q^{-2\eta}$. 
The upshot is that for any 
$\bfx\in \supp (W_r)$, we have
\begin{equation}\label{eq Phi_a stable}
\Vert \Phi_{\bfv} (\bfx) \Vert 
<  2\Delta
\end{equation}
as soon as $Q$ is large enough.
Similarly, for $\bfx = \bfx_0 + \bfh \in \supp (W_r)$,
Taylor expansion shows
$$
\Vert \nabla F_i(\bfx) -  \nabla F_i(\bfx_0) 
\Vert_\infty  \ll r
$$
So,
$ \Vert\nabla \Phi_{\bfv} (\bfx) \Vert_\infty 
\leq \Vert \nabla \Phi_{\bfv} (\bfx_0) \Vert_\infty + 
O(r Q^\eta \delta^{-1})
$.
As $r Q^\eta \delta^{-1}
\leq K Q^{-\eta}$,
we infer that
\begin{equation}\label{eq K stable}
\Vert \nabla \Phi_{\bfv} (\bfx) \Vert_\infty 
\leq 2 K
\end{equation}
for any $\bfx\in\fS_\bfp$ whenever $Q$ is large.
Combining \eqref{eq Phi_a stable}
and \eqref{eq K stable}, 
implies $\supp (W_r) \subset \fS_{2 \bfp}$.\par
For the last part of the third property note that we have
$$r^d\# \fC  \ll \int_{\R^d}\sum_{\substack{\sB(\bfc,r/2)\in \fC}} 
b\Big( \frac{\bfx - \bfc}{r}\Big) 1_{\cU_d} (\bfx) \d \bfx \ll  \mu_d(\fS_{2\bfp}) .$$
Here we used that every ball $\sB(\bfc,r)$ with $\sB(\bfc,r/2)\in \fC$ has a center that is contained in $\sU_d$ and hence the volume of the intersection of such a ball with $\sU_d$ is $\gg_d r^d$.
We infer that
$\# \fC \ll \mu_d(\fS_{2\bfp})/\mu_d(\sB(\bzero,r))
\ll t_r$. 
\end{proof}

From now on we assume that we are given an admissible triple $(\Omega, \omega, w)$ of weights. The following definition is central for us.
\begin{definition}
The $(\bfF,\bfp)$-sub level 
part of $\Omega$ is defined to be
$\wbad :=\Omega(\bfx) W_r(\bfx)$.
The $(\bfF,\bfp)$-good part of 
$\Omega$ 
is defined as 
$\wgood :=\Omega (\bfx) - \wbad (\bfx)$.
\end{definition}
In the following, we shall suppress notation and drop 
the weight functions $\omega$,
and $w$ as well as $\bfF$,

writing
$N^{\Omega}$ in place of 
$N^{\Omega,\omega,w}_\bfF$.
Observe that $\Omega= \wgood + \wbad$.
Since $\wbad$ is non-negative,
our starting point for proving 
Theorem \ref{thm main lower bounds}
is simply that 
\begin{equation}\label{eq basic lower bound}
    N^{\Omega}(\delta,Q)
    \geq    
    N^{\wgood}(\delta,Q).
\end{equation}
\subsection{Fourier Expansion}
\label{subsec fourier expansion}
Our next step is to Fourier expand the right hand side of 
\eqref{eq basic lower bound}.
To this end, let 
\begin{equation}\label{def J}
    J:= \frac{Q^{\eta}}{\delta}.
\end{equation}
We also define the associated index set of frequency 
vectors 
\begin{equation}\label{def j}
    \cJ:= 
    \{\bfj=(j_1,\ldots,j_m)\in \Z^m:
    0\leq \vert j_i\vert \leq J
    \mathrm{\, for\, all\,} 1\leq i \leq m,\,\,
    \bfj \neq \mathbf{0}
    \}.
    \end{equation}
We can now make the following decomposition.
\begin{lemma}\label{le decomp}
Let $A>1$.
Uniformly for 
$\delta\in (0,1/2)$ 
and $Q\geq 1$, 
we have 
\begin{equation} \label{eq Fourier decomp}
N^{\wgood}(\delta,Q)
=
M(\delta, Q)
+ E(\delta,Q)
+ O_{w,\eta,A}(Q^{-A})
\end{equation}
where 
\begin{equation}\label{def main term}
 M(\delta, Q):=   
 \delta^m (\widehat{\bump}(0))^m
\sum_{(q,\bfa)\in \Z^{d+1}} 
\wgood\left(\frac{\bfa}{q}\right) 
\ome\left(\frac{q}{Q}\right),
\end{equation}
and 
\begin{equation}\label{def Error Term}
    E(\delta,Q):=
    \delta^m   
    \sum_{(q,\bfa,\bfj)\in \Z^{d+1}\times \cJ} 
    \wgood\left(\frac{\bfa}{q}\right) 
    \ome\left(\frac{q}{Q}\right)
    B(\delta \bfj)
    e(q \langle 
    \bfF( \bfa/q ),
    \bfj \rangle),
\end{equation}  
with $\cJ$ is defined as in \eqref{def j}.
\end{lemma}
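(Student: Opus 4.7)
The plan is to apply Lemma \ref{lem: truncated Poisson} individually to each of the $m$ factors $w(\|qF_i(\bfa/q)\|/\delta)$ appearing in the product that defines $N_\bfF^{\bft}(\delta,Q)$. Since the map $y\mapsto w(\|y\|/\delta)$ is $\bZ$-periodic and even, the lemma applied with $x = qF_i(\bfa/q)$ yields, for any $A>0$,
$$
w\!\left(\frac{\|qF_i(\bfa/q)\|}{\delta}\right) = \delta \sum_{|j_i|\leq J}\widehat{w}(\delta j_i)\, e(j_i q F_i(\bfa/q)) + O_{w,\eta,A}(Q^{-A}),
$$
where $J = Q^{\eta}/\delta$ and we have folded the $j_i=0$ term (equal to $\delta\widehat{w}(0)$) into the sum, using $e(0)=1$.

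Next I would expand the product of these $m$ identities. Grouping the principal pieces into a single multi-index sum and using linearity $\sum_i j_i q F_i(\bfa/q) = q\langle \bfF(\bfa/q),\bfj\rangle$ in the exponential gives
$$
\prod_{i=1}^m w\!\left(\frac{\|qF_i(\bfa/q)\|}{\delta}\right) = \delta^m \sum_{\bfj\in \{-J,\dots,J\}^m} B(\delta\bfj)\, e\bigl(q\langle \bfF(\bfa/q),\bfj\rangle\bigr) + R(q,\bfa),
$$
where $B(\delta\bfj):=\prod_{i=1}^m \widehat{w}(\delta j_i)$ and $R(q,\bfa)$ collects the cross terms in which at least one factor is of the form $O(Q^{-A})$ rather than a principal piece. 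After multiplying by $\wgood(\bfa/q)\omega(q/Q)$ and summing over $(q,\bfa)\in \Z^{d+1}$, the contribution of $\bfj=\mathbf{0}$ produces precisely the main term $M(\delta,Q)$ (with constant $(\widehat{w}(0))^m$), while the indices $\bfj\in\cJ$ produce exactly $E(\delta,Q)$.

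To finish, I would estimate the contribution of the remainders $R(q,\bfa)$. Each principal factor $\delta\sum_{|j_i|\leq J}\widehat{w}(\delta j_i)e(\cdot)$ is bounded in absolute value by a constant depending only on $w$, since $\widehat{w}$ is Schwartz and $\delta\sum_{j\in\Z}|\widehat{w}(\delta j)|$ is a uniformly bounded Riemann sum. Consequently each of the (at most $2^m$) cross terms comprising $R(q,\bfa)$ is $\ll_{w,m,\eta,A} Q^{-A}$. Since $\wgood\le\Omega$ is supported in $\sB(\mathbf 0,1/2)$ and $\omega(q/Q)$ is supported in $q\in[Q/2,Q]$, the pair $(q,\bfa)$ is restricted to at most $O(Q^{d+1})$ lattice points. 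The total contribution of $R$ is therefore $O(Q^{d+1-A})$, which becomes $O_{w,\eta,A'}(Q^{-A'})$ for any prescribed $A'$ by choosing the $A$ in Lemma \ref{lem: truncated Poisson} large enough. This yields the decomposition \eqref{eq Fourier decomp}.

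The principal (though mild) obstacle is purely bookkeeping: tracking the multinomial expansion so that the uniform boundedness of the principal factors is used correctly, and ensuring that the polynomial factor $Q^{d+1}$ from the cardinality of the $(q,\bfa)$ sum is absorbed into a final power saving. Note that no nondegeneracy of $\bfF$ enters at this stage, and the sub-level decomposition of $\Omega$ into $\wgood+\wbad$ plays no role beyond substituting $\wgood$ in place of $\Omega$ --- the argument is essentially ``truncated Poisson summation plus multinomial expansion.''
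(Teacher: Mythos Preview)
Your proposal is correct and follows essentially the same approach as the paper: apply Lemma~\ref{lem: truncated Poisson} to each factor, expand the $m$-fold product, separate the $\bfj=\mathbf{0}$ contribution from $\bfj\in\cJ$, and bound the cross terms involving $O(Q^{-A})$. In fact your bookkeeping is slightly more explicit than the paper's, which suppresses the observation that the $O(Q^{d+1})$ summands in $(q,\bfa)$ force one to relabel $A$ to absorb the polynomial factor.
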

\begin{proof}
By Lemma \ref{lem: truncated Poisson},
we have 
\begin{align*}
\prod_{i\leq m} 
\bump \bigg(
\frac{\|q F_i (\bfa/q) \|}{\delta}
\bigg)
& =
\prod_{i\leq m} \Big(
\delta \widehat{\bump}(0)
+ \delta  
\sum_{1 \leq \vert j_i \vert 
\leq J} 
\widehat{\bump}(\delta j_i) 
e(j_iq F_i (\bfa/q ))
+ O_{\bump,\eta,A}(Q^{-A})
\Big) \\
& = \delta^m (\widehat{\bump}(0))^m
+ \delta^m \sum_{\bfj \in \cJ} 
B(\delta \bfj)
e(q \langle 
\bfF( \bfa/ q ),
\bfj \rangle)
+ O_{\bump,\eta,A}(Q^{-A}),
\end{align*}
where the coefficients are given by
$$
B(\delta \bfj):= \prod_{i\leq m}
\widehat{\bump}(\delta j_i).
$$
Substituting the above into the definition of $N^{\wgood}$ and
recalling \eqref{def counting func} now yields the desired decomposition.
\end{proof}

Now we shall evaluate $M(\delta, Q)$, which shall
naturally turn out to be the main term.
\begin{lemma}\label{le zero mod comp}
Assume that $r \geq Q^{\eta -1}$. If $A>0$, then 
$$
\sum_{(q,\bfa)\in \Z^{d+1}} 
\wgood\left(\frac{\bfa}{q}\right) 
\ome\left(\frac{q}{Q}\right)
= c_{\Omega,\omega} Q^{d+1}
+O_{A,\eta}(Q^{-A})
$$ 
where $$
c_{\Omega,\omega}
:= 
\Big( 1+ O(\mu_d(\fS_{2\bfp})) 
\Big)
\int_{\R^d} \Omega(\bfx) \rd \bfx
\cdot 
\int_{\R} x^d\omega(x) \rd x.
$$
\end{lemma}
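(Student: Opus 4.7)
\textbf{Proof plan for Lemma \ref{le zero mod comp}.} The plan is to apply Poisson summation twice, first in the $\bfa$-variable (for each fixed $q$) and then in the $q$-variable, and to use the fact that $\Omega$ is Schwartz while $W_r$ is $r$-rough with $r^{-1} \le Q^{1-\eta}$.

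For a fixed $q\in\bN$, substitute $\bfx = \bfa/q$ and apply Poisson summation to obtain
\[
\sum_{\bfa\in\bZ^d}\wgood\!\left(\tfrac{\bfa}{q}\right)
= q^{d}\sum_{\bfk\in\bZ^d}\widehat{\wgood}(q\bfk).
\]
The zero-frequency term is $q^{d}\int_{\R^d}\wgood(\bfx)\,\rd\bfx$. Since $\wgood = \Omega(1-W_r)$, with $W_r\in[0,1]$ and $\supp W_r\subset\fS_{2\bfp}$, we have
\[
\int_{\R^d}\wgood(\bfx)\,\rd\bfx
= \int_{\R^d}\Omega(\bfx)\,\rd\bfx \;+\; O\bigl(\|\Omega\|_\infty\,\mu_d(\fS_{2\bfp})\bigr)
= \Bigl(1+O(\mu_d(\fS_{2\bfp}))\Bigr)\int_{\R^d}\Omega(\bfx)\,\rd\bfx,
\]
which will produce the stated constant $c_{\Omega,\omega}$ after the $q$-summation.

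For the non-zero frequencies $\bfk\neq\bzero$, I will invoke the roughness of $W_r$ (property (2) of Lemma \ref{le smooth cut off}) together with the smoothness of $\Omega$, which, via the Leibniz rule, gives $\|D^{\balp}\wgood\|_{L^1}\ll_{\balp} r^{-\|\balp\|_1}$ uniformly in $r\in(0,1)$. Integration by parts $N$ times in the direction of the largest coordinate of $q\bfk$ then yields
\[
|\widehat{\wgood}(q\bfk)| \;\ll_N\; (q|\bfk|)^{-N}\,r^{-N}\qquad\text{for any }N\in\bN.
\]
Using the standing hypothesis $r\ge Q^{\eta-1}$ and $q\asymp Q$, this is bounded by $|\bfk|^{-N}Q^{-N\eta}$. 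Summing over $\bfk\neq \bzero$ (convergent once $N>d$) and multiplying by $q^{d}\ll Q^d$ shows that the contribution of the non-zero frequencies to the inner sum is $\ll_N Q^{d-N\eta}$, uniformly in $q$.

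It remains to handle the $q$-summation. Combining the steps above gives
\[
\sum_{(q,\bfa)\in\bZ^{d+1}}\wgood\!\left(\tfrac{\bfa}{q}\right)\omega\!\left(\tfrac{q}{Q}\right)
= \Bigl(\int_{\R^d}\wgood(\bfx)\,\rd\bfx\Bigr)\sum_{q\in\bZ}q^d\,\omega\!\left(\tfrac{q}{Q}\right)
\;+\; O_{N}\!\bigl(Q^{d+1-N\eta}\bigr).
\]
Because $\omega$ is Schwartz, one more application of Poisson summation to the remaining $q$-sum (or direct Euler--Maclaurin on the smooth, compactly supported integrand $y^d\omega(y/Q)$) gives
\[
\sum_{q\in\bZ}q^{d}\,\omega\!\left(\tfrac{q}{Q}\right)
= Q^{d+1}\int_{\R}y^{d}\omega(y)\,\rd y \;+\; O_{A'}(Q^{-A'})
\]
for any $A'>0$, with implied constants depending on $\omega$. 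Choosing $N$ and $A'$ large in terms of $A$ and $\eta$ absorbs both error terms into $O_{A,\eta}(Q^{-A})$ and delivers the claimed formula.

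The only nontrivial point is the interplay between the roughness of $\wgood$ (which costs $r^{-N}$ per derivative) and the frequency size $q|\bfk|\ge Q$; the hypothesis $r\ge Q^{\eta-1}$ is precisely what turns this balance into an arbitrarily strong saving $Q^{-N\eta}$, and is the only place that hypothesis is used.
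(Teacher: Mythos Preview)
Your proposal is correct and follows essentially the same approach as the paper: Poisson summation in $\bfa$, using the $r$-roughness of $\wgood$ together with $r\ge Q^{\eta-1}$ to kill the nonzero frequencies, then Poisson summation in $q$ using that $\omega$ is Schwartz. The only cosmetic difference is that the paper packages the $q$-sum as $Q^d\sum_q Z(q/Q)$ with $Z(x)=x^d\omega(x)$ before applying Poisson, whereas you write it directly; the content is identical.
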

\begin{proof}
For brevity, let $G$ denote the Fourier transform
of $\wgood$.
Fix $q \asymp Q$. By Poisson summation
and a change of variables,
\begin{equation}\label{eq zero mode interm}
\sum_{\bfa\in \Z^{d}} 
\wgood\left(\frac{\bfa}{q}\right) 
= 
q^d
\sum_{\bfv \in \Z^{d}} 
G(q \bfv).
\end{equation}
Due to the $r$-roughness of
$\wgood$, its Fourier transform $G$ 
decays rapidly only once the norm of its 
argument exceeds roughly $1/r$. 
Indeed, partial integration implies that
for each $A>1$, we have
$$
G(\bfx)
\ll_A \frac{r^{-A}}{1+\Vert \bfx \Vert_2^A}.
$$
By the assumption on $r$, 
we have $qr \gg Q^{\eta}$
whenever $q\gg Q$.
Thus, 
the right hand side of 
\eqref{eq zero mode interm}
is equal to
$
q^d G(\bzero) + 
O_{A,\eta}(Q^{-A})$.
As a result, 
\begin{align*}
\sum_{(q,\bfa)\in \Z^{d+1}} 
\wgood\left(\frac{\bfa}{q}\right) 
\ome\left(\frac{q}{Q}\right)&= 
G(\bzero)
\sum_{q\in \Z} 
q^d \ome\left(\frac{q}{Q}\right)
+O_{A,\eta}(Q^{-A})\\ 
&= 
Q^d G(\bzero)
\sum_{q\in \Z} 
Z\left(\frac{q}{Q}\right)
+
O_{A,\eta}(Q^{-A})
\end{align*}
where 
$Z(x):=x^d\omega(x)$. 
Arguing as before (with $Z$
in place of $\Omega$), we infer
$$
\sum_{q\in \Z} 
Z\left(\frac{q}{Q}\right)
= Q \widehat{Z}(0) + O_A(Q^{-A}).
$$
The upshot is that
$$
\sum_{(q,\bfa)\in \Z^{d+1}} 
\wgood\left(\frac{\bfa}{q}\right) 
\ome\left(\frac{q}{Q}\right)
=
Q^{d+1} G(\bzero)
\widehat{Z}(0)
+
O_{A,\eta}(Q^{-A}).
$$
Since $\wbad$
is at most one, by construction,
and supported in $\fS_{2\bfp}$,
by Lemma \ref{le smooth cut off},
we observe 
$$
 G(\bzero)
 = 
 \int_{\R^d} \big(\Omega (\bfx) - \wbad (\bfx)\big)\rd \bfx
 =  \int_{\R^d} \Omega (\bfx) \rd \bfx 
 + O(\mu_d(\fS_{2\bfp})).
$$
Using this information in the computation of
$G(\mathbf{0})$ completes the proof.
\end{proof}
For later reference, 
we point out that 
Lemma \ref{le zero mod comp} 
implies 
\begin{equation}\label{eq main term computed fully}
    M(\delta,Q) = 
    c_{(\Omega,\omega,\bump)} 
    [1+ O(\mu_d(\fS_{2\bfp}))] \delta^m Q^{d+1}
    +   O_{A,\eta}(Q^{-A})
\end{equation}
where $c_{(\Omega,\omega,\bump)}$
is the constant from 
\eqref{def leading constant}.

\subsection{Integration by Parts} 
\label{subsec ibp}
It remains to show that
$E(\delta,Q)$
is indeed an error term. Let 
\begin{equation}
    \label{eq v split}
    \bfv = (\bfj,\bfv')\in \mathbb{R}^{m}\times\mathbb{R}^{d}.
\end{equation}
For a large enough constant $C_\bfF>0$ (depending only on $\bfF$ and to be decided shortly), we set
\begin{equation}
\label{def Vset}
\cV:=\{(\bfj,\bfv')\in \Z^n: 
0<\Vert\bfj \Vert_\infty \leq J,\,
0 \leq \Vert\bfv' \Vert_\infty \leq C_\bfF J\}.   
\end{equation}
We define the truncated error term 
$E_{\mathrm{tr}}(\delta,Q)$ to be
\begin{equation}
    \label{def trunc error}
    E_{\mathrm{tr}}(\delta,Q)= 
\delta^m   Q^{d+1}
    \sum_{0 \leq \vert c\vert \leq C_\bfF J}\sum_{\bfv\in \cV} \,
    B(\delta \bfj)
    \int_{\R^{d+1}} y^d
    e(y Q 
    [\Phi_{\bfv}(\bfx) - c] )
    \wgood(\bfx) 
    \ome(y)
    \rd \bfx \rd y.
\end{equation}
The next lemma shows that up to an acceptable error, it is  $E_{\mathrm{tr}}(\delta,Q)$ which decides the order of magnitude of $E(\delta,Q)$.
\begin{lemma}\label{le error error small} 
Let  $N\geq d+2$ and suppose $\bfF$ is $(N+1)$-times 
continuously differentiable. Assume that $\delta \geq Q^{2\eta -1}$ and $r\geq Q^{\eta -1}$. 
We have
\begin{equation}\label{eq E truncated}
E(\delta,Q) =
E_{\mathrm{tr}}(\delta,Q)
    + O_N(\delta^m J^{n+1} Q^{d+1 } (QrJ)^{-N} 
+Q^{-2N}).   
\end{equation}
\end{lemma}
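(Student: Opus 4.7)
The plan is to apply Poisson summation to the lattice variables $(q, \bfa) \in \bZ^{d+1}$ inside $E(\delta, Q)$ to rewrite it as an exact dual series indexed by $(\bfv', c) \in \bZ^{d+1}$, identify its restriction to the range $\|\bfv'\|_\infty \leq C_\bfF J$ and $|c| \leq C_\bfF J$ as $E_{\mathrm{tr}}(\delta, Q)$, and then show that the remaining ``tail'' is negligible by means of the integration-by-parts machinery of Lemma \ref{lem partial}.

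Fix $\bfj \in \cJ$. Since $\wgood$ is supported in $\sU_d$ and $\omega$ in $[1/2,1]$, the summand of $E(\delta, Q)$, viewed as a function of $(q, \bfa) \in \bR^{d+1}$, is smooth with compact support in $\{q \in [Q/2, Q],\, |\bfa| \leq Q/2\}$, and Poisson summation over $\bZ^{d+1}$ applies exactly. After the change of variables $(q, \bfa) \mapsto (Qy, Qy\bfx)$ in the dual Fourier integral (with Jacobian $Q^{d+1}y^d$), one obtains
\begin{equation*}
E(\delta, Q) = \delta^m Q^{d+1} \sum_{\bfj \in \cJ} B(\delta\bfj) \sum_{(\bfv', c) \in \bZ^{d+1}} I(\bfj, \bfv', c),
\end{equation*}
where the integrand $I(\bfj, \bfv', c)$ coincides with the one appearing in $E_{\mathrm{tr}}$ at $\bfv = (\bfj, \bfv')$ (after reconciling sign conventions in the Poisson formula).

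I then choose $C_\bfF$ large enough, depending only on $\bfF$, so that on $\sU_d$: $(i)$ whenever $\|\bfv'\|_\infty > C_\bfF J$, we have $\|\nabla_\bfx \Phi_\bfv\|_\infty \geq \|\bfv'\|_\infty/2$ (using $\|\sum_i j_i \nabla F_i\|_\infty \ll_\bfF J$); and $(ii)$ there exists an absolute constant $C_1$ such that $|c| > C_1 \max(\|\bfv'\|_\infty, J)$ forces $|\Phi_\bfv(\bfx) - c| \geq |c|/2$ (from the trivial bound $|\Phi_\bfv(\bfx)| \ll_\bfF \|\bfv\|_\infty$). The tail $E - E_{\mathrm{tr}}$ then splits into three disjoint regions. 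In the region $\|\bfv'\|_\infty > C_\bfF J$ with $|c| \leq C_1 \|\bfv'\|_\infty$, I perform $N$-fold integration by parts in $\bfx$ with phase $\psi = \Phi_\bfv$ and $\lambda = yQ$, invoking Lemma \ref{lem partial}: using the $r$-roughness of $\wgood$, the bound $\|\partial^\alpha \Phi_\bfv\|_\infty \ll_\bfF J$ for $|\alpha| \geq 2$ (which uses that $\bfF$ is $(N+1)$-times continuously differentiable), $|\nabla_\bfx \Phi_\bfv| \geq \|\bfv'\|_\infty/2$, and $J/\|\bfv'\|_\infty \leq 1$, every term in $L^{\circ N}\wgood$ is bounded by $r^{-N}\|\bfv'\|_\infty^{-N}$, giving $|I(\bfj, \bfv', c)| \ll (Qr\|\bfv'\|_\infty)^{-N}$. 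In the two complementary regions where $|c|$ dominates, so that $|\Phi_\bfv - c| \geq |c|/2$, I instead perform $N$-fold integration by parts in $y$; the $y$-phase being linear with gradient of magnitude at least $Q|c|/2$ and $y\mapsto y^d\omega(y)$ being smooth yields $|I| \ll (Q|c|)^{-N}$. Assembling the contributions via $\#\cJ \ll J^m$ together with the tail estimates $\sum_{\|\bfv'\|_\infty > C J,\,\bfv'\in \bZ^d} \|\bfv'\|_\infty^{k-N} \ll J^{d+k-N}$ for $N > d+k$ (valid since $N \geq d+2$) and $\sum_{|c| > CJ} |c|^{-N} \ll J^{1-N}$, each of the three regions contributes at most $\delta^m Q^{d+1} J^{n+1}(QrJ)^{-N}$. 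The $Q^{-2N}$ term in the statement plays the role of a safety margin absorbing incidental lower-order errors.

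The main technical obstacle is the $\bfx$-integration-by-parts analysis in the first region: every summand in the expansion of $L^{\circ N}\wgood$ provided by Lemma \ref{lem partial} has the form $r^{-j_{\mathrm{amp}}} J^M/\|\bfv'\|_\infty^{N+M}$ (up to absolute constants), where $j_{\mathrm{amp}} + \sum_l \kappa_l = N$ and $M$ is the number of phase-type factors. Careful bookkeeping is required to check this explicitly, and the bound collapses to $r^{-N}\|\bfv'\|_\infty^{-N}$ precisely because $r \leq 1$ and $J/\|\bfv'\|_\infty \leq 1$ on the relevant region. This is also where the smoothness hypothesis on $\bfF$ is invoked, since Lemma \ref{lem partial} requires $\psi = \Phi_\bfv$ to be $(N+1)$-times continuously differentiable for the phase-type factors to be uniformly controlled.
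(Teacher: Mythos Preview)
Your proposal is correct and follows essentially the same approach as the paper's proof: Poisson summation in $(q,\bfa)$, the same change of variables $(q,\bfa)\mapsto (Qy,Qy\bfx)$, the same splitting of the tail according to whether $\Vert\bfv'\Vert_\infty$ or $|c|$ is dominant, and the same use of Lemma~\ref{lem partial} with the $r$-roughness of $\wgood$ and the bound $\Vert\partial^{\balp}\Phi_\bfv\Vert_\infty\ll J$ for $\vert\balp\vert\ge 2$. The only cosmetic differences are that the paper partitions the tail into two pieces rather than three and bounds the $y$-integration-by-parts contribution directly by $Q^{-2N}$ (taking an arbitrarily large exponent there), whereas you absorb it into $\delta^m Q^{d+1}J^{n+1}(QrJ)^{-N}$ and treat $Q^{-2N}$ as a safety margin; both are valid.
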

\begin{proof}
By using Poisson summation in
the $\bfa$ and $q$ variables, we can express $E(\delta,Q)$ as
$$
\delta^m   
    \sum_{(c,\bfj,\bfv')\in \Z\times \cJ \times \Z^{d}} 
    B(\delta \bfj)
    \int_{\R^{d+1}}
    e(y \langle 
    \bfF( \bfx/y ),
    \bfj \rangle
    - \langle 
    \bfx,
    \bfv' \rangle
    - y c)
    \wgood\left(\frac{\bfx}{y}\right) 
    \ome\left(\frac{y}{Q}\right)
    \rd \bfx \rd y.
$$
Next, we change variables twice,
first via $y\mapsto Q y$ and then via 
$\bfx \mapsto Q y \bfx$. 
This transforms the integral above into $Q^{d+1}$ times
\begin{equation}\label{eq intermediate Error}
  \cI(\bfj, \bfv', c):=
\int_{\R^{d+1}}
    y^d e(Qy [\langle 
    \bfF( \bfx ),
    \bfj \rangle
    - \langle 
    \bfx,
    \bfv' \rangle
    - c])
    \wgood(\bfx) 
    \ome(y)
    \,\rd \bfx\, \rd y.  
\end{equation}
Recall $J=Q^\eta/\delta$ 
from \eqref{def J}, and we have assumed that $\mathbf{F}$ is $(N+1)$-times continuously differentiable. 
To proceed, we truncate now the $c$
and $\bfv'$ variables appropriately, making use of integration by parts to get rid of the regimes where these parameters are of large size. 
Let $L^N$ be the differential operator as defined in \eqref{def Lop} with $$\psi(\bfx)=\langle \bfF( \bfx ),\bfj \rangle
- \langle \bfx,\bfv' \rangle,$$ (for a fixed $y$). Note that there is a constant $C_{\bfF}$ larger than one, only depending on $\bfF$, such that for $\Vert \bfv'\Vert_\infty > C_\bfF^{1/2} J$ we have 
\begin{equation}
    \label{eq grad lb}
    \|\nabla \psi\|_\infty \geq (1/2) \Vert \bfv'\Vert_\infty.
\end{equation}
Assume that $\Vert \bfv'\Vert_\infty >
C_\bfF^{1/2} J$.  

Recall the notation from Definition \ref{def typeABterms}. By Lemma \ref{lem partial}, we have that $$L^{\circ N} \wgood = \sum_{\nu=1}^{V} c_{N,\nu} \Omega_{N,\nu},$$
with each $\Omega_{N,\nu}$ of the form
$
P(\tfrac{\nabla \psi}{|\nabla\psi|}) 
\beta_{\mathrm{amp}}
\prod_{l=1}^M \gamma_l$.
Here $P$ is a polynomial of $d$ variables 
(independent of $\wgood$ and $\psi$), $\beta_{\mathrm{amp}}$ 
is of amplitude type with order $j_{\mathrm{amp}}$
for some $j_{\mathrm{amp}}\in \{0,\dots, N\}$, and
each $\gamma_l$ 
is of phase type with order $\kappa_l$, so that \eqref{eq order} is true.

Note that for each $j\in\mathbb{N}$, we have $\|(\wgood)_{j+1}\|_{\infty}\ll r^{-(j+1)}$. 
Combining this with the lower bound \eqref{eq grad lb}, we get 
\begin{equation*}
\|\beta_{\mathrm{amp}}\|_{\infty}\ll (r\Vert \bfv'\Vert_\infty)^{-j_{\mathrm{amp}}} ,    
\end{equation*}
where the implicit constant may depend on $\bfF$. 
Observe that $\|\psi_{j+1}\|_{\infty}\ll Q^{\eta}\delta^{-1}$ for $j\geq 1$,
and thus, using \eqref{eq grad lb} again, we get
$$\|\gamma_l\|_{\infty}\ll Q^{\eta}\delta^{-1}\Vert \bfv'\Vert_\infty^{-\kappa_l-1}\ll \Vert \bfv'\Vert_\infty^{-\kappa_l}.$$
The last inequality follows from our assumption that $\Vert \bfv'\Vert_\infty >  C_\bfF J=C_\bfF Q^{\eta}\delta^{-1}$.
Using \eqref{eq jM sum} and the above observations, we surmise
$\|\Omega_{N, \nu}\|_\infty\ll r^{-N} \Vert \bfv'\Vert_\infty^{-N}.$
Thus
$$Q^{-N}\|L^{\circ N} \wgood\|_{\infty}\ll_{N, d} Q^{-N} r^{-N}\Vert \bfv'\Vert_\infty^{-N},$$
and
\begin{equation}
    \label{eq ibp1}
     \cI(\bfj, \bfv', c)\ll Q^{-N} r^{-N}\Vert \bfv'\Vert_\infty^{-N}. 
\end{equation}

Next, we estimate the integral in (\ref{eq intermediate Error}) for large values of $c$. This time, we integrate by parts in the $y$ variable keeping $\bfx$ fixed. Indeed, the application of Lemma \ref{lem partial}, with $$\psi(y)=y [\langle \bfF( \bfx ), \bfj \rangle- \langle \bfx, \bfv' \rangle-c]),$$ is much simpler as all derivatives of order greater than one of the phase function vanish; and the derivatives of the amplitude $\omega$ are uniformly bounded from above. After possibly enlarging the constant $C_\bfF$, we find that for $|c|\geq C_\bfF^{1/2} \max\{J,\Vert \bfv'\Vert_\infty\}$ and for any $A\in\mathbb{N}$, we have
\begin{equation}
    \label{eq IBP2}
    \cI(\bfj, \bfv', c)\ll_A (|c|Q)^{-A}.
\end{equation}

Recall that $N\geq d+2$. Let $A>0$ be a sufficiently large parameter. We can bound the contribution of triples $(c,\bfj,\bfv')\in \Z^{n+1}$ with $\Vert (c,\bfv')\Vert_\infty > C_{\bfF} J$ as follows
$$
E(\delta, Q)-E_{\mathrm{tr}}(\delta, Q)
\ll \delta^m \sum_{\substack{(c,\bfj,\bfv')\in \Z\times \cJ \times \Z^{d}\\ \Vert (c,\bfv')\Vert_\infty > C_{\bfF} J}}  Q^{d+1}
\cI(\bfj, \bfv', c).
$$
We can split the sum above to express the right hand side as
\begin{equation}
\delta^m Q^{d+1}\sum_{\substack{(c,\bfj,\bfv')\in \Z\times \cJ \times \Z^{d}\\ \Vert (c,\bfv')\Vert_\infty > C_{\bfF} J\\ |c|< C_\bfF^{1/2} \Vert\bfv'\Vert_\infty }}  \cI(\bfj, \bfv', c)+\delta^m Q^{d+1}\sum_{\substack{(c,\bfj,\bfv')\in \Z\times \cJ \times \Z^{d}\\ \Vert (c,\bfv')\Vert_\infty > C_{\bfF} J\\ |c|\geq  C_\bfF^{1/2} \Vert\bfv'\Vert_\infty }} \cI(\bfj, \bfv', c). \label{eq align t}    
\end{equation}
Using \eqref{eq ibp1} and taking $C_{\bfF}$ sufficiently large, the first term in \eqref{eq align t} can be bounded from above by $C_{\bfF}$ times
\begin{align*}
 \delta^m Q^{d+1}\sum_{\substack{(\bfj,\bfv')\in  
 \cJ \times \Z^{d}\\ 
 \Vert \bfv'\Vert_\infty > C_{\bfF}^{1/2} J }}  
 \Vert \bfv'\Vert_\infty \left(Qr \Vert \bfv'\Vert_\infty\right)^{-N}
&\ll \delta^m J^m Q^{d+1-N }r^{-N} \sum_{ \Vert \bfv'\Vert_\infty > C_{\bfF}^{1/2} J }  \Vert \bfv'\Vert_\infty^{1-N} \\
   & \ll \delta^m J^m Q^{d+1-N }r^{-N} J^{d+1-N} \\
   & \ll \delta^m J^{n+1} Q^{d+1 } (QrJ)^{-N}.
\end{align*}
Similarly, we can use \eqref{eq IBP2} to estimate the second term in \eqref{eq align t}, for $A\geq d+2$, as follows
\begin{align*}
\delta^m Q^{d+1} 
\sum_{\substack{(c,\bfj,\bfv')\in 
\Z\times \cJ \times \Z^{d}\\ 
\Vert (c,\bfv')\Vert_\infty > C_{\bfF} J\\ 
|c|\geq  C_\bfF^{1/2} \Vert\bfv'\Vert_\infty }}  
(|c| Q)^{-A}
   & \ll_A \delta^m Q^{d+1} J^m \sum_{|c| > 
   C_{\bfF} J} |c|^{d} (|c| Q)^{-A}\\
   & \ll_A \delta^m J^{m} Q^{d+1-A} 
   J^{d+1-A}\ll_N Q^{-2N}.
\end{align*}
Recalling \eqref{def Phi},
the function in the rectangular brackets
in \eqref{eq intermediate Error} 
is $\Phi_{(-\bfv',\bfj)}$. 
Thus we obtain $$E(\delta,Q) -E_{\mathrm{tr}}(\delta,Q)
    \ll_N  \delta^m J^{n+1} Q^{d+1 } (QrJ)^{-N} 
+Q^{-2N},$$
and this finishes the proof.
\end{proof}

The final lemma of this section establishes that $E(\delta, Q)$ is indeed an acceptable error term.
\begin{lemma}\label{le error small} 
Let $\bfF$ be $(N+1)$-times continuously differentiable. Assume that $\delta \geq Q^{2\eta -1}$ and $K\leq 1$. Further, suppose that
\begin{equation}
     \tag{C3}
    \label{C3}
    \Delta\geq Q^{\eta-1},
\end{equation}
and
\begin{equation}
    \tag{C4}
    \label{C4}
    Q^{\eta-1} K^{-1} \leq r \leq Q^{-2\eta} K\delta.
\end{equation}
Then 
$$
E(\delta,Q) \ll_N  \delta^m Q^{d+1} Q^{n+1-\eta (n+1+N)}.
$$
\end{lemma}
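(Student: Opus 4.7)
My plan is to reduce to the truncated error $E_{\mathrm{tr}}(\delta, Q)$ via Lemma \ref{le error error small} and then bound $E_{\mathrm{tr}}(\delta, Q)$ by a bespoke integration by parts argument applied to each oscillatory integral $\cI(\bfj, \bfv', c)$. Since the conclusion is a $\ll_N$ bound, I may assume $N \geq d + 2$ so that Lemma \ref{le error error small} applies. Under \eqref{C4} one has $QrJ \geq Q \cdot Q^{\eta-1}/K \cdot Q^\eta/\delta = Q^{2\eta}/(K\delta) \geq Q^{2\eta}$, so the tail error $O_N(\delta^m J^{n+1} Q^{d+1}(QrJ)^{-N} + Q^{-2N})$ from that lemma is easily absorbed into the target bound. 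Thus it suffices to establish $|E_{\mathrm{tr}}(\delta, Q)| \ll \delta^m Q^{d+1} Q^{n+1 - \eta(n+1+N)}$.

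For this, fix a triple $(\bfj, \bfv', c)$ appearing in \eqref{def trunc error} and set $\bfv := (\bfj, \bfv') \in \bZ^n$. By the definition of $\cV$ we have $0 < \|\bfv\|_\infty \leq C_\bfF J < T$, so by \eqref{def non osc set} at every $\bfx \in \supp(\wgood) \subset \sU_d \setminus \fS_\bfp$ at least one of $\|\Phi_\bfv(\bfx)\| \geq \Delta$ or $\|\nabla \Phi_\bfv(\bfx)\|_\infty \geq K$ must hold. I would exploit this dichotomy through a smooth partition $1 = \chi_1 + \chi_2$ defined via $\chi_1(\bfx) := \phi(\|\nabla \Phi_\bfv(\bfx)\|_2^2/K^2)$, where $\phi$ is a fixed smooth cutoff equal to one on $[1/4, \infty)$ and vanishing on $(-\infty, 1/16]$. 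Then $\chi_2$ is supported in $\{\|\nabla \Phi_\bfv\|_\infty < K/2\}$, so on $\supp(\chi_2 \wgood)$ the dichotomy forces $\|\Phi_\bfv(\bfx)\| \geq \Delta$ and hence $|\Phi_\bfv(\bfx) - c| \geq \Delta$ for every integer $c$.

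For the $\chi_1$-piece I would apply Lemma \ref{lem partial} to the $\bfx$-integral with phase $\psi(\bfx) = yQ\Phi_\bfv(\bfx)$ and amplitude $\chi_1 \wgood$. Since $\bfF$ is $(N+1)$-times continuously differentiable, all derivatives of $\Phi_\bfv$ of order $\geq 1$ are bounded by $O(J)$, while $|\nabla \psi| \gtrsim QK$ on $\supp(\chi_1)$. A short Fa\`a di Bruno calculation shows $|\partial^\alpha \chi_1| \ll (J/K)^{|\alpha|}$. Condition \eqref{C4} reads $r \leq K/(JQ^\eta) \leq K/J$, so by Leibniz the combined amplitude satisfies $|\partial^\alpha(\chi_1 \wgood)| \ll r^{-|\alpha|}$. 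Tracking the phase-type and amplitude-type factors through Lemma \ref{lem partial}, each summand of $L^{\circ N}(\chi_1\wgood)$ is bounded by $(QKr)^{-j_{\mathrm{amp}}}(J/K)^M (QK)^{-(N - j_{\mathrm{amp}})}$; using $J/K \leq 1/r$ together with the constraint $j_{\mathrm{amp}} + M \leq N$ from \eqref{eq jM sum} collapses this to $(QKr)^{-N}$, and so the $\chi_1$-contribution to $\cI(\bfj, \bfv', c)$ is of size $(QKr)^{-N}$. For the $\chi_2$-piece, the variables separate in the phase $y\cdot Q[\Phi_\bfv(\bfx) - c]$, so an $N$-fold integration by parts in $y$ alone (the phase being linear in $y$, with derivative of modulus at least $Q\Delta$) produces a contribution of size $(Q\Delta)^{-N}$. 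Conditions \eqref{C3} and \eqref{C4} now yield $QKr \geq Q^\eta$ and $Q\Delta \geq Q^\eta$, so $|\cI(\bfj, \bfv', c)| \ll Q^{-\eta N}$ uniformly in the triple.

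Summing over the $O(J^{n+1})$ triples $(\bfj, \bfv', c)$, using $|B(\delta\bfj)| \ll 1$ and the hypothesis $\delta \geq Q^{2\eta-1}$ (which forces $J \leq Q^{1-\eta}$), we obtain
\begin{equation*}
|E_{\mathrm{tr}}(\delta, Q)| \ll \delta^m Q^{d+1} J^{n+1} Q^{-\eta N} \ll \delta^m Q^{d+1} Q^{(n+1)(1-\eta) - \eta N} = \delta^m Q^{d+1} Q^{n+1 - \eta(n+1+N)},
\end{equation*}
which is exactly the desired bound. The delicate point of the argument is the $\chi_1$-piece: verifying that the cutoff $\chi_1$ really inherits the roughness scale $K/J$ (rather than something coarser) from $\|\nabla \Phi_\bfv\|_2$, and that when one combines its derivatives with those of $\wgood$ and tracks the factors through Lemma \ref{lem partial}, the key inequality $J/K \leq 1/r$ supplied by \eqref{C4} really does collapse the bookkeeping cleanly to $(QKr)^{-N}$, rather than leaving residual factors that would spoil the sum over $\bfv$.
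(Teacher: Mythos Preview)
Your proposal is correct and follows essentially the same approach as the paper: reduce to $E_{\mathrm{tr}}$ via Lemma~\ref{le error error small}, split each oscillatory integral by a smooth cutoff according to the size of $\|\nabla\Phi_{\bfv}\|$, integrate by parts in $\bfx$ (via Lemma~\ref{lem partial}) on the large-gradient piece, integrate by parts in $y$ on the small-gradient piece where the dichotomy forces $\|\Phi_{\bfv}(\bfx)\|\ge\Delta$, and then sum over $O(J^{n+1})$ triples. The only differences are cosmetic---you use $\phi(\|\nabla\Phi_{\bfv}\|_2^2/K^2)$ where the paper uses $b(K^{-1}\nabla\Phi_{\bfv})$, and you absorb the factor $yQ$ into the phase rather than treating it as the scaling parameter $\lambda$---and your bookkeeping of the amplitude- and phase-type factors, collapsing to $(QKr)^{-N}$, is equivalent to the paper's which arrives at $(rK)^{-N}\cdot Q^{-N}$.
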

\begin{proof}
We first bound $E_{\mathrm{tr}}(\delta,Q)$.
For any $\bfv \in \cV$ (as defined in \eqref{def Vset}),
and any point $\bfx \in \supp(\wgood)$,
at least one of the inequalities
\begin{equation}\label{eq inequalities to be reversed}
\Vert \Phi_{\bfv}(\bfx)\Vert <  \Delta,
\quad \mathrm{or} \quad 
\Vert \nabla\Phi_{\bfv}(\bfx) \Vert_\infty < K,
\end{equation}
is false.
This motivates us to decompose 
$E_{\mathrm{tr}}(\delta,Q)$ based on the size of 
$\Vert \nabla\Phi_{\bfv}(\bfx) \Vert_\infty$ with respect to $K$. 
Let $b:\mathbb{R}^{d}\to [0,1]$ be a smooth weight, 
supported on $\{\bfz\in\mathbb{R}^d:\|\bfz\|_2<1\}$ and 
identically equal to $1$ on 
$\{\bfz\in\mathbb{R}^d:\|\bfz\|_2\leq 1/2\}$. 
For each $\bfv=(\bfj, \bfv')\in \cV$ and 
$c\in \mathbb{Z}$ with $0 \leq |c| \leq C_{\mathbf{F}}J$, we split
\[\int_{\R^{d+1}}
    y^d e(y Q 
    [\Phi_{\bfv}(\bfx) - c] )
    \wgood(\bfx) 
    \ome(y)
    \rd \bfx \rd y
    =  I(\bfv, c)+ II(\bfv, c),
\]
with
\[I(\bfv, c):=\int_{\R^{d+1}}
    y^d e(y Q 
    [\Phi_{\bfv}(\bfx) - c] )
    \wgood(\bfx) 
    \ome(y) b(K^{-1}\nabla \Phi_{\bfv} (\bfx))
    \rd \bfx \rd y\]
    and
\[II(\bfv, c):= \int_{\R^{d+1}}
    y^d e(y Q 
    [\Phi_{\bfv}(\bfx) - c] )
    \wgood(\bfx) 
    \ome(y) (1-b(K^{-1}\nabla \Phi_{\bfv} (\bfx)))
    \rd \bfx \rd y.\]
We deal with the second integral first. 
The amplitude function in $II(\bfv, c)$ is supported on the set where $\|\nabla \Phi_{\bfv} (\bfx)\|_2 \geq K/2$, which lets us integrate by parts in the $\bfx$ variable, keeping $y$ fixed.  Let $L^N$ be the differential operator as defined in \eqref{def Lop} with $\psi=\Phi_{\bfv}$. Set $$h=\wgood\cdot [1-b(K^{-1}\nabla \Phi_{\bfv})].$$ Lemma \ref{lem partial} tells us that $L^{\circ N} h = \sum_{\nu=1}^{V} c_{N,\nu} h_{N,\nu}$
with each $h_{N,\nu}$ of the form
$$
P(\tfrac{\nabla \psi}{|\nabla\psi|}) 
\beta_{\mathrm{amp}}
\prod_{l=1}^M \gamma_l.$$
As before, $P$ is a polynomial of $d$ variables 
(independent of $h$ and $\psi$), $\beta_{\mathrm{amp}}$ 
is of amplitude type and order $j_{\mathrm{amp}}$,
for some $j_{\mathrm{amp}}\in \{0,\dots, N\}$, while
each $\gamma_l$ 
is of phase type and order $\kappa_l$
so that \eqref{eq order} is satisfied.

A few crucial observations are now in order. Let $j\in \mathbb{Z}_{\geq 0}$. We shall use the notation from Definition \ref{def typeABterms}.

(i) Since $r\leq \delta K Q^{-2\eta}$,
we observe that
$$
\|h_{j}\|_{\infty}\ll (K^{-1}Q^{\eta}\delta^{-1}+r^{-1})^{j}\ll r^{-j}.
$$
Combining this with the assumed lower bound on $\|\nabla \Phi_\bfv\|_\infty$, we get $\|\beta_{\mathrm{amp}}\|_{\infty}\ll (rK)^{-j_{\mathrm{amp}}}.$

(ii) Further, 
$\|\psi_{j+1}\|_{\infty}\ll Q^{\eta}\delta^{-1}$.
By using the lower bound on 
$\|\nabla \psi\|_\infty=\|\nabla 
\Phi_\bfv\|_\infty$ again, we get
$$
\|\gamma_l\|_{\infty}\ll 
Q^{\eta} \delta^{-1}K^{-(\kappa_l+1)}.
$$
Using \eqref{eq order}, \eqref{eq jM sum} and the above observations, we surmise
\begin{align*}
\|h_{N, \nu}\|_\infty\ll (rK)^{-j_{\mathrm{amp}}}\prod_{l=1}^M (Q^{\eta}\delta^{-1} K^{-(\kappa_l+1)})&=r^{-j_{\mathrm{amp}}}Q^{M\eta}K^{-N}(\delta K)^{-M}\\
&\leq r^{-j_{\mathrm{amp}}} K^{-N} (Q^{-\eta}\delta K)^{-N+j_{\mathrm{amp}}}.    
\end{align*}
Recall that $(Q^{-\eta}\delta K)^{-1}\ll 
r^{-1}$ and $(rK)^{-1} \leq Q^{1-\eta}$ 
by our assumptions, which lets us conclude
$$\|h_{N, \nu}\|_\infty\ll  r^{-j_{\mathrm{amp}}}K^{-N}r^{-N+j_{\mathrm{amp}}}\ll Q^{(1-\eta)N}.$$
Thus $Q^{-N}\|L^{\circ N} h\|_{\infty}\ll_{N, d} Q^{-N\eta},$
and integrating by parts (with respect to $\bfx$) $N$ times yields
$$ II(\bfv, c) \ll Q^{-N\eta}.$$

We now bound $I(\bfv, c)$. Observe that the support of $ \wgood\cdot b(K^{-1}\nabla \Phi_{\bfv} )$ is contained in the complement of $\mathfrak{S}_\bfp$,  where $\|\nabla \Phi_{\bfv}\|_\infty <K$. 
 By the definition of $\mathfrak{S}_\bfp$, we conclude that for any $c\in\mathbb{Z}$, $$|\Phi_{\bfv}(\bfx)-c| \geq \|\Phi_{\bfv}(\bfx) \|\geq \Delta\geq Q^{\eta-1}$$ on this set. Thus, we integrate by parts in the $y$ variable (noting that $|d^{j}\omega(y)|\ll_j 1$ and $\partial^{j+1}_y\psi =0 $ for any $j\in\mathbb{Z}_{\geq 1}$) to conclude that 
 $$I(\bfv, c)\ll_{N} Q^{-N \eta}$$
 for any $N\in \N$.
Summing up the contributions from $\bfv\in \mathcal{V}$ and $0<|c|<C_{\bfF}J$, we get
$$E_{\mathrm{tr}}(\delta, Q)\ll_N \delta^m 
Q^{d+1}
    \sum_{0 \leq \vert c\vert \leq C_\bfF J} 
    \sum_{\bfv\in \cV} Q^{-N\eta}
    \ll_N \delta^m Q^{d+1} J^{n+1}Q^{-N\eta}.$$
In view of \eqref{eq E truncated} and the above, we can conclude
\begin{equation}
    \label{eq Error est}
    E(\delta, Q)\ll_N \delta^m Q^{d+1} J^{n+1} 
    Q^{-N\eta}+\delta^m Q^{d+1 } J^{n+1} (QrJ)^{-N} 
    +Q^{-2N}.
\end{equation}
Note that \eqref{def J} and the assumed lower bound on $\delta$ together imply that $J\leq Q^{1-\eta}$. Moreover, since $r\geq Q^{\eta -1}$ by our assumption, we have 
$QrJ \gg Q^{2\eta} \delta^{-1} \gg Q^{2 \eta}$.
Thus,
$$
E(\delta, Q)\ll_N  
\delta ^m Q^{d+1} J^{n+1} Q^{-N\eta} 
\ll_N \delta ^m Q^{d+1} Q^{(1-\eta)(n+1)} Q^{-N\eta}.
$$
This gives the required estimate.
\end{proof}
\section{Proof of Theorems \ref{thm main asymp bounds}-\ref{thm main upper bounds}}\label{parameters}
\subsection{Choice of parameters}
\label{subsec par choice}
We set
$$J=Q^{\eta}\delta^{-1},\quad T=2C_\bfF Q^{\eta} \delta^{-1}, 
\quad \Delta=Q^{4\eta-1},
\quad K=\delta^{-\frac{1}{2}}
Q^{2\eta-\frac{1}{2}},
\quad 
r=\delta^{\frac{1}{2}}
Q^{-\frac{1}{2}}.$$
Assume that
$$\eta \leq 1/8,\qquad Q^{4\eta -1}\leq \delta <1/2.$$
It is easily checked that with the above choice, 
the conditions \eqref{C1}-\eqref{C4} 
are all satisfied as well as the conditions
$$r\geq Q^{\eta -1},\qquad K\leq 1.$$
Observe that 
\begin{equation}\label{eq measure comp}
    K\Delta T^{n-1}\ll 
\delta^{-\frac{1}{2}}
Q^{2\eta - \frac{1}{2}} Q^{4\eta -1} (Q^{\eta} \delta^{-1})^{n-1}\ll \delta^{-(n-\frac{1}{2})}
Q^{-\frac{3}{2}+ (n+5)\eta}. 
\end{equation}

We now shortly compare the choice of our parameters with those in work of Beresnevich and Yang \cite{BY}. In the notation above, they choose $T$
 and $\Delta$ of comparable size, but their value of $K$, say $K_{BY}$ is roughly of size $K_{BY}\asymp \delta^{-1} Q^{-\frac{1}{2}}$, and hence typically larger than our choice of $K$. This is one advantage of our method, that we can take the exceptional `bad' set, also called the 'special part' in \cite{BY}, to be of smaller size.
 
\subsection{The main proofs}
With the above choice in place, we are now ready to prove our main results on counting rational points. We begin by establishing the lower bounds. 

\begin{proof}[Proof of Theorem \ref{thm main lower bounds}]
Recall the notation from Lemma \ref{le decomp}.
By \eqref{eq basic lower bound} 
and \eqref{eq Fourier decomp}, we see 
$$
N^{\Omega}(\delta,Q)
    \geq    N^{\wgood}(\delta,Q)=
M(\delta, Q)
+ E(\delta,Q)
+ O_A(Q^{-A}),
$$
for any $A>0$.
In view of \eqref{eq measure comp} and the assumption on $\delta$ in Theorem \ref{thm main lower bounds},
Theorem \ref{thm quant non div} 
implies that $$\mu(\fS_{2\bfp})=O(Q^{-\frac{\eta}{d(2l-1)(n+1)}}).$$
Note that for $n\geq 2$ and $0<\eta \leq \frac{1}{8}$ we have
$$Q^{4\eta -1} \leq Q^{-\frac{3}{2n-1}+\frac{2n+12}{2n-1}\eta}.$$
As a result, Lemma \ref{le zero mod comp} (more specifically, \eqref{eq main term computed fully}) 
tells us
\begin{align*} 
M(\delta,Q)&= \delta^m Q^{d+1}\cdot \left(1+O(Q^{-\frac{\eta}{d(2l-1)(n+1)}})\right)
(\widehat{w}^m 
\widehat{\Omega})(0)
\int_{\R} x^d\omega(x) 
\rd x 
\\
&= c_\bft \delta^m Q^{d+1} +O\left(\delta^m Q^{d+1-\frac{\eta}{d(2l-1)(n+1)}}\right).
\end{align*}
By assumption, $\bfF$ is at least $\frac{n+1}{\eta}$ times continuously differentiable. 
Hence Lemma \ref{le error small}, 
upon choosing $N:=\lceil \frac{n+1}{\eta}\rceil $ ensures $E(\delta,Q)=O\left(\delta^mQ^{d+1-\eta}\right)$.
Thus we obtain the desired lower bound.
\end{proof}
The proof of the upper bounds proceeds in a similar way.
\begin{proof}[Proof of Theorem \ref{thm main upper bounds}]
By the third item of 
Lemma \ref{le smooth cut off} and 
Lemma \ref{le rationals in a ball},
we infer that 
\begin{equation}\label{eq bounding bad part}
N^{\wbad}(\delta,Q) 
\ll \sum_{q\asymp Q} 
t_r \cdot (qr)^d
\ll Q^{d+1}
\mu_d(\fS_{2\bfp}).
\end{equation}
Using Theorem \ref{thm quant non div} 
and the estimate \eqref{eq measure comp}, we deduce
\begin{equation}\label{eq N bad bound}
N^{\wbad}(\delta,Q) 
\ll
Q^{d+1}
(\delta^{-(n-\frac{1}{2})}
Q^{-\frac{3}{2}+ (n+5)\eta} )^{\frac{1}{d(2l-1)(n+1)}}
.
\end{equation}
By the definition of $l$-nondegeneracy, 
$l\geq 2$. 
If $n\geq 2$ and $\eta \leq \frac{1}{2n+10}$, then
$$Q^{\frac{-3+(2n+10)\eta}{2md(2l-1)(n+1)+2n-1}}\geq Q^{4\eta -1}.$$
Further, by the proof of Theorem \ref{thm main lower bounds},
we know $ N^{\wgood}(\delta,Q) 
\ll \delta^m Q^{d+1}$ for 
\begin{equation}\label{rangedelta2}
\delta \geq Q^{\frac{-3+(2n+10)\eta}{2md(2l-1)(n+1)+2n-1}}.
\end{equation}
Hence, for those values of $\delta$ we obtain the upper bound
$$N^{\Omega}(\delta,Q)\ll \delta^m Q^{d+1}  + Q^{d+1}
(\delta^{-(n-\frac{1}{2})}
Q^{-\frac{3}{2}+ (n+5)\eta} )^{\frac{1}{d(2l-1)(n+1)}}.$$
Note that for $\delta$ in the range \eqref{rangedelta2}, the first term dominates. By monotonicity of the counting function $N^{\Omega}(\delta,Q)$, we find that
$$N^{\Omega}(\delta,Q)\ll \delta^m Q^{d+1} + (Q^{\frac{-3+(2n+10)\eta}{2md(2l-1)(n+1)+2n-1}})^m Q^{d+1} ,$$
for all $0< \delta \leq 1/2$. 
We can rewrite this as
$$N^{\Omega}(\delta,Q)\ll \delta^m Q^{d+1} + Q^{d+1- \frac{3m-m(2n+10)\eta}{2md(2l-1)(n+1)+2n-1}}   ,$$
which implies the desired result.
\end{proof}
Finally, we combine the upper 
and lower bounds to deduce
asymptotics for $N^{\Omega}(\delta,Q)$.
\begin{proof}[Proof of Theorem \ref{thm main asymp bounds}]
Assume that $\bfF$ is $N$ 
times continuously differentiable, 
and recall
$$N^{\Omega}(\delta,Q)
    =    N^{\wgood}(\delta,Q) + N^{\wbad}(\delta,Q).$$
By the proof of Theorem \ref{thm main lower bounds} and Theorem \ref{thm main upper bounds}, and by using Lemma \ref{le error small} we infer 
\begin{align*}
N(\delta,Q) =&c_\bft \delta^m Q^{d+1} +O\left( Q^{d+1}(\delta^{-(n-\frac{1}{2})}
Q^{-\frac{3}{2}+ (n+5)\eta} )^{\frac{1}{d(2l-1)(n+1)}} \right) \\&+ O\left( \delta^mQ^{d+1+n+1-\eta(n+N)}  \right).     
\end{align*}
If we assume that $n\geq 2$ and $N \geq \frac{n+2}{\eta}$, then this simplifies to
\begin{equation}\label{almost asymp}
N(\delta,Q) =c_\bft \delta^m Q^{d+1} +O\left( Q^{d+1}(\delta^{-(n-\frac{1}{2})}
Q^{-\frac{3}{2}+ (n+5)\eta} )^{\alpha} \right)
\end{equation}
where $\alpha:= 1/(d(2l-1)(n+1))$.
Now we deduce the ``in particular'' part 
of the theorem. 
To this end we notice that the 
scale, say, $\delta_0$
for which the expected main term 
$\delta^m Q^{d+1}$
and the $O$-term are of the same magnitude
is given by
$$
\delta_0 :=
Q^{\alpha \frac{-3/2+ (n+5)\eta}
{m+\alpha (n-1/2)}}.
$$
The assumption $\eta< 3/(2(n+5))$ ensures 
$-3/2+ (n+5)\eta<0$. Hence, we see
$$\delta_0 < 
\delta_{\mathrm{simp}}:=
Q^{\alpha \frac{-3/2+ (n+5)\eta}
{m+\alpha (n-1/2)}}
Q^{\frac{\epsilon}{\alpha (n-1/2)}}.
$$
Observing $\delta_{\mathrm{simp}}/\delta_{0}
\gg Q^{\frac{\epsilon}{\alpha (n-1/2)}}$, we 
write
$\delta\in (\delta_{\mathrm{simp}},1/2)$ 
in the form $\delta= \xi \delta_{0}$
with $\xi \gg Q^{\frac{\epsilon}{\alpha (n-1/2)}}$.
Plugging this factorisation into
\eqref{almost asymp} yields
$$
N(\delta,Q) =c_\bft \xi^m 
\delta_0^m Q^{d+1} +O\left( \delta^m Q^{d+1-\epsilon}
\right).
$$
\end{proof}

\section{Applications to Diophantine Approximation: Proofs}\label{proofsapplications}

\subsection{Upper Bounds for Hausdorff dimension}

\begin{proof}[Proof of Theorem \ref{thm haus s zero}]
Since $\psi$ is monotonic, by a slight generalisation of Cauchy's condensation test, \eqref{eq Hmeas gset conv} being convergent is equivalent to 
\begin{equation}\label{eq dyad Hmeas gset}
 \sum_{t=1}^\infty \
 e^{(n+1)t}\Big(\frac{\psi(e^t)}{e^t}\Big)^{s+m}<\infty\,.
\end{equation}
We assume without loss of generality that
$\mathcal{M}$ is in a normalised Monge form, that is
$$
\mathcal{M}=\{(\bfx,\bfF(\bfx)), \bfx\in 
\sU_d\},
$$
where $\sU_d = \sB(\bzero,1)$ is the unit ball. 
Fix $\bfx_0\in\sU_d$ such that $\bfF$ is $l$--nondegenerate at $\bfx_0$. 
Recall the set $\fS_{\bfp, T}=\fS_{\Delta, K, T}$ 
as defined in \eqref{def non osc set}, and let $\epsilon>0$ be as in \eqref{eq Hmeas bset conv}. Keeping the choice of parameters from 
\S \ref{subsec par choice} in mind, we let
$$\Delta_0=\Delta_0(Q):=Q^{4\epsilon-1},\,\, T_0=T_0(\delta,Q):=2C_{\bfF}Q^{\epsilon}\delta^{-1},$$ 
$$ K_0(\delta, Q)=K_0:=\delta^{-\frac{1}{2}}Q^{2\epsilon-\frac{1}{2}},$$
and
$$\fM(\delta, Q):=\fS_{\Delta_0, K_0, T_0}.$$
Observe that if the series in \eqref{eq Hmeas bset conv} converges for some $\epsilon_0$, then it also converges for all $\epsilon \leq \epsilon_0$. Hence we may assume without loss of generality that $\epsilon <1/8$. If $\delta$ satisfies the inequalities
$$Q^{4\epsilon-1}\leq \delta <1/2,$$
then by Theorem \ref{thm quant non div}, 
there exists a ball $B_0\subseteq \sU_d$, 
centred at $\bfx_0$, such that
\begin{equation}
    \label{eq bset est}
    \mu_d(\fM(\delta, Q)\cap B_0)\ll 
    \left(\delta^{-(n-\frac{1}{2})}Q^{-\frac{3}{2}+(n+5)\epsilon}\right)^{\frac{1}{d(2l-1)(n+1)}}
    \mu_d(B_0).
\end{equation}

Write $\bff(\bfx)=(\bfx,\bfF(\bfx))$ and for $A\subseteq \mathscr{U}_d$, let 
$$\cR(A, \delta, Q):=\left\{(\bfa, q)\in \mathbb{Z}^{n+1}:e^{-1}Q \leq q<Q,\, \inf_{\bfx\in A}\left\|\bff(\bfx)-\frac{\bfa}{q}\right\|_{\infty}<\frac{\delta}{Q}\right\}.$$
For a vector $\bfa\in \Z^n$ we write $\bfa'$ for its first $d$ coordinates.\par
By \eqref{eq excep Hausd}, it suffices to show that
$$
\cH^s\big(\big\{\bfx\in B_0:(\bfx,\bfF(\bfx))\in\cS_n(\psi)\big\}\big)=0.
$$ 
For each $t\geq T$ consider the set
$$\widetilde{A}_t:= \fM(e\psi(e^{t-1}), e^t).$$
Note that the convergence of the series \eqref{eq Hmeas bset conv} for $s\leq d$ implies that for $t$ sufficiently large, we have
$$e^{t(4\epsilon -1)}\leq e \psi(e^{t-1}).$$
We now apply Lemma \ref{le smooth cut off} with $r=(\psi(e^{t-1})e^{-(t-1)})^{\frac{1}{2}}$, and observe that the set $\widetilde{A}_t$ can be covered by
$$
\ll \left(\psi(e^{t-1}) e^{-(t-1)}\right)^{-\frac{d}{2}}\left(\psi(e^{t-1})^{n-\frac{1}{2}}e^{(\frac{3}{2}-(n+5)\epsilon)(t-1)}\right)^{-\alpha}
$$
balls of radius $2r=2\left(\psi(e^{t-1}) e^{-(t-1)}\right)^{\frac{1}{2}}$. Write $A_t$ for the union of these balls of radius $2r$.

By \cite[Lemma 1.4]{BY}, for any $T\ge1$, we can express
$$\big\{\bfx\in B_0:(\bfx,\bfF(\bfx))\in\cS_n(\psi)\big\}=\bigcup_{t\ge T} \left(A_t \cup B_t\right),$$
where
$$B_t:=\bigcup_{(\bfa,q)\in
\cR(B_0\setminus A_t;
e\psi(e^{t-1}),e^t)}
\left\{\bfx\in B_0: \left\| \bfx -\frac{\bfa'}{q} 
\right\|_{\infty}<\frac{\psi(e^{t-1})}{e^{t-1}}
\right\}.$$
We now use the notation as in section \ref{sublevelFA}, and w.l.o.g. we may assume that $\Omega$ is identically equal to one on $B_0$. If we are given a vector $(\bfa,q)\in
\cR(B_0\setminus A_t;
e\psi(e^{t-1}),e^t)$, then we claim that $W_r\left(\frac{\bfa'}{q}\right)=0$. Let $\bfy\in B_0\setminus A_t$ such that $$\left\Vert \bff(\bfy)-\frac{\bfa}{q}\right\Vert_\infty < \frac{\psi(e^{t-1})}{e^{t-1}}.$$

If $\frac{\bfa'}{q}$ is contained in the support of $W_r$ and more precisely is contained in one of the balls of radius $r$ described in Lemma \ref{le smooth cut off}, say with center $\bfc$, then
$$\left\Vert \bfc - \bfy\right\Vert_2 \leq \left\Vert \bfc-\frac{\bfa'}{q}\right\Vert_2 + \left\Vert \frac{\bfa'}{q}-\bfy\right\Vert_2 \leq r+ d  \frac{\psi(e^{t-1})}{e^{t-1}}.$$
Note that for $t$ sufficiently large, the right hand side is $<2r$, a contradiction to $\bfy\in B_0\setminus A_t$, and we find that $W_r\left(\frac{\bfa'}{q}\right)=0$.

By combining Lemmas \ref{le decomp} and \ref{le error small} with the estimate \eqref{eq main term computed fully}, we deduce 
that 
$$\sharp\cR(B_0\setminus A_t;
e\psi(e^{t-1}),e^t) \ll \psi(e^{t-1})^{m}e^{(d+1)(t-1)}.$$
Hence the set $B_t$ is contained in the union of $\ll\psi(e^{t-1})^{m}e^{(d+1)(t-1)}$ balls of radius $\ll \frac{\psi(e^{t-1})}{e^{t-1}}$. We conclude that

$$
\cH^s\left(\big\{\bfx\in B_0:(\bfx,\bfF(\bfx))\in\cS_n(\psi)\big\}\right)\ll \sum_{t\ge T}\psi(e^{t-1})^{m}e^{(d+1)(t-1)} \;\cdot\;\left(\frac{\psi(e^{t-1})}{e^{t-1}}\right)^s+
$$
\begin{equation}\label{eq2.15}
+\sum_{t\ge T}\left(\psi(e^{t-1}) e^{-(t-1)}\right)^{\frac{s-d}{2}}\left(\psi(e^{t-1})^{n-\frac{1}{2}}e^{(\frac{3}{2}-(n+5)\epsilon)(t-1)}\right)^{-\alpha}\,.
\end{equation}
The first sum can be rewritten as
$$
\sum_{t\ge T}e^{(n+1)(t-1)} \left(\frac{\psi(e^{t-1})}{e^{t-1}}\right)^{s+m},
$$
which converges to zero by \eqref{eq dyad Hmeas gset} as $T\rightarrow\infty$. Similarly, the second sum also tends to zero, this time due to \eqref{eq Hmeas bset conv}. We can thus conclude that $$\cH^s\left(\big\{\bfx\in B_0:(\bfx,\bfF(\bfx))\in\cS_n(\psi)\big\}\right)=0.$$
\end{proof}

\begin{proof}[Proof of Corollary \ref{cor lbound dim}]
Let $s>\frac{n+1}{\tau+1}-\textrm{ codim }\cM$ and $\psi(q)=q^{-\tau}$. Since $\cM$ is $l$--nondegenerate everywhere except possibly on a set of Hausdorff dimension less than $s$, \eqref{eq excep Hausd} is automatically true. Further, a straightforward calculation using the lower bound on $s$ shows that \eqref{eq Hmeas gset conv} is convergent. 

Next, we show that there exists a choice of $\epsilon$ for which \eqref{eq Hmeas bset conv} is true.
For any $\epsilon>0$, we can express
\begin{equation}
    \label{eq beta series}
    \sum_{t=1}^{\infty}\left(\frac{\psi(e^t)}{e^{t}}\right)^{\frac{s-d}{2}}(\psi(e^t)^{n-\frac{1}{2}}e^{(\frac{3}{2}-(n+5)\epsilon)t})^{-\alpha}=\sum_{t=1}^{\infty}e^{\beta t},
\end{equation}
where
\begin{equation}
    \label{eq beta1}
\beta=-\frac{(\tau+1)(s-d)}{2}+\alpha\tau\left(n-\frac{1}{2}\right)-\alpha\left(\frac{3}{2}-(n+5)\epsilon\right).
\end{equation}
Observe that due to our assumption on $s$, we have
$$-\frac{(\tau+1)(s-d)}{2}< \frac{n\tau-1}{2}.$$
Plugging the above into \eqref{eq beta1} gives
$$\beta < \tau\left[\frac{n}{2}+\alpha\left(n-\frac{1}{2}\right)\right]-\alpha\left(\frac{3}{2}-(n+5)\epsilon\right)-\frac{1}{2}.$$
Thus, a sufficient condition for the series in \eqref{eq beta series} to converge is that 
$$\tau\leq \frac{3\alpha+1-2(n+5)\epsilon}{\alpha(2n-1)+n}.$$
Due to $\eqref{eq tau cond}$, we can always choose an $\epsilon>0$ such that the above condition, and therefore \eqref{eq dyad Hmeas gset}, is satisfied for the given $\tau$.

We can thus use Theorem \ref{thm haus s zero} to conclude that $\cH^s(\cS_n(\tau)\cap\cM)=0$, and consequently $\dim (\cS_n(\tau)\cap\cM)\le s$. Since $s>\frac{n+1}{\tau+1}-\textrm{ codim }\cM$ is arbitrary we conclude that 
$$\text{dim}(\mathcal{M}\cap\mathcal{S}_n(\tau))\leq \frac{n+1}{\tau+1}-\text{codim }\mathcal{M}.$$
To upgrade the above to an equality, we use the lower bound \eqref{eq BLVV17} provided by \cite[Theorem 1]{BLVV17}
to deduce that we also have
$$\text{dim}(\mathcal{M}\cap\mathcal{S}_n(\tau))\geq \frac{n+1}{\tau+1}-\textrm{ codim }\cM,$$
for 
$$\tau<\frac{3\alpha+1}{(2n-1)\alpha+n}< \frac{1}{n-d}.$$
This finishes the proof.
\end{proof}

\subsection{Proof of Corollary \ref{cor dio exp}}
As the arguments to complete the proof of Corollary \ref{cor dio exp} are rather standard, we only sketch here the details.\par 

The following proposition, proved in \cite[\S 2.3]{BY} using standard analysis, allows us to fix the nondegeneracy parameter $l$ to be $n$ in the special case of curves.
\begin{proposition}[\cite{BY}]
Let $\bfF:\mathscr{U}\to\R^n$ be $l$-nondegenerate at $x_0\in\mathscr{U}$, where $\mathscr{U}$ is an interval in $\R$. Then there is an interval $B_0$ centred at $x_0$ and a countable subset $S\subset B_0$ such that $\bfF$ is $n$-nondegenerate at every point $x\in B_0\setminus S$.  
\end{proposition}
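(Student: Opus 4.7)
The aim is to produce a neighborhood $B_0$ of $x_0$ and a countable $S\subseteq B_0$ such that, outside $S$, the first $n$ derivatives of $\bfF$ span $\R^n$. The natural object to study is the principal Wronskian
\[
W(x) := \det\bigl[\bfF'(x),\, \bfF''(x),\, \ldots,\, \bfF^{(n)}(x)\bigr],
\]
which is continuous on $\mathscr{U}$ (as $\bfF$ is $l$-times continuously differentiable with $l\ge n$). By definition, $\bfF$ is $n$-nondegenerate at $x$ precisely when $W(x)\neq 0$, so the proposition reduces to showing that $\{W=0\}\cap B_0$ is countable for a suitable $B_0$.

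\textbf{Step 1 (shrink to the $l$-nondegenerate interval).} Since $\bfF$ is $l$-nondegenerate at $x_0$, some $n\times n$ minor of the matrix $[\bfF'(x_0),\ldots,\bfF^{(l)}(x_0)]$ is nonzero; by continuity, the same minor stays nonzero on a closed subinterval $B_0$ centered at $x_0$, so $\bfF$ is $l$-nondegenerate at every point of $B_0$.

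\textbf{Step 2 ($W$ is not identically zero on any subinterval).} This is the crux of the argument. Suppose, aiming at a contradiction, that $W\equiv 0$ on some open $J\subseteq B_0$. On $J$, the derivatives $\bfF'(x),\ldots,\bfF^{(n)}(x)$ are linearly dependent everywhere. By upper semicontinuity of rank, one can arrange (after shrinking $J$ and, if necessary, invoking the statement of the proposition inductively in $\R^{n-1}$ via a coordinate projection to handle the lower-rank alternative) that $\bfF'(x),\ldots,\bfF^{(n-1)}(x)$ are pointwise linearly independent on a subinterval $J'\subseteq J$. Cramer's rule then supplies smooth coefficients $c_i\in C^\infty(J')$ with
\[
\bfF^{(n)}(x) \;=\; \sum_{i=1}^{n-1} c_i(x)\,\bfF^{(i)}(x) \qquad\text{on } J'.
\]
Differentiating this identity repeatedly and proceeding by induction on $k\ge n$, one deduces that $\bfF^{(k)}(x)$ lies in the $(n-1)$-dimensional space $\operatorname{span}\{\bfF^{(1)}(x),\ldots,\bfF^{(n-1)}(x)\}$ for every $n\le k\le l$ and every $x\in J'$. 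This contradicts the $l$-nondegeneracy of $\bfF$ at points of $J'\subseteq B_0$.

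\textbf{Step 3 (upgrade to countability).} Having excluded identical vanishing of $W$ on subintervals of $B_0$, the set $S := \{x\in B_0 : W(x)=0\}$ is closed and nowhere dense. To conclude that $S$ is countable, I would show that $W$ has finite order of vanishing at each of its zeros. If $W$ vanished to infinite order at some $y\in B_0$, then a Taylor expansion of the relation $\bfF^{(n)}(x) = \sum c_i(x)\bfF^{(i)}(x)$ set up in Step 2 would propagate the dependence of $\bfF^{(n)}$ on the lower derivatives into a full subinterval containing $y$, again contradicting Step 2. Continuous functions with finite-order zeros have isolated zeros, so $S$ is discrete and hence countable.

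\textbf{Main obstacle.} The principal difficulty is Step 2: in the $C^\infty$ setting the classical ``Wronskian $\equiv 0$ implies linear dependence'' theorem fails (standard counterexamples include $x^2$ and $x|x|$, or $C^\infty$ bump-function variants supported on disjoint intervals), so one cannot extract the contradiction from abstract linear-algebraic properties of smooth function spaces. Instead, the proof must exploit the $l$-nondegeneracy hypothesis concretely, by solving for the top derivative via Cramer's rule and iterating differentiation up to order $l$. The inductive management of points where the lower-order Wronskian itself vanishes, which requires the proposition in dimension $n-1$, is the subtle technical ingredient that makes the whole strategy work in the merely smooth category.
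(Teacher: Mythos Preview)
The paper does not supply a proof of this proposition; it simply attributes the result to \cite[\S 2.3]{BY} and uses it. So there is no argument in the paper to compare against, and your attempt has to be judged on its own merits.

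Your overall strategy---study the Wronskian $W(x)=\det[\bfF'(x),\dots,\bfF^{(n)}(x)]$ and show its zero set in $B_0$ is countable---is the right one, and Step~1 is fine. The serious problem is Step~3. You claim that if $W$ vanishes to infinite order at $y$, then ``a Taylor expansion of the relation $\bfF^{(n)}=\sum c_i\bfF^{(i)}$ set up in Step~2 would propagate the dependence \dots\ into a full subinterval containing $y$''. This is circular: the relation in Step~2 was only derived on an open interval where $W\equiv 0$, and you are trying to manufacture such an interval around $y$ from infinite-order vanishing. In the $C^\infty$ category, infinite-order vanishing at a point does \emph{not} imply vanishing on any neighbourhood (e.g.\ $e^{-1/x^2}$), so no Taylor argument can bridge this gap. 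As written, Step~3 does not rule out an uncountable (say Cantor-type) zero set for $W$.

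The fix is to argue pointwise at $y$ rather than trying to return to Step~2. A direct differentiation gives $W'=\det[\bfF',\dots,\bfF^{(n-1)},\bfF^{(n+1)}]$, and by induction on $k$ one checks that, provided $\bfF'(y),\dots,\bfF^{(n-1)}(y)$ are linearly independent,
\[
W(y)=\cdots=W^{(k-1)}(y)=0\ \Longrightarrow\ W^{(k)}(y)=\det\bigl[\bfF'(y),\dots,\bfF^{(n-1)}(y),\bfF^{(n+k)}(y)\bigr],
\]
because every other term in $W^{(k)}(y)$ is a determinant whose columns lie in the $(n-1)$-dimensional span of $\bfF'(y),\dots,\bfF^{(n-1)}(y)$. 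Hence infinite-order vanishing of $W$ at such a $y$ would force every $\bfF^{(j)}(y)$ into that $(n-1)$-plane, contradicting $l$-nondegeneracy at $y$. This shows that zeros of $W$ in $B_0\setminus S_{n-1}$ are isolated, where $S_{n-1}:=\{x:\bfF'(x),\dots,\bfF^{(n-1)}(x)\ \text{dependent}\}$. The countability of $S_{n-1}$ itself is then handled by the same mechanism with $n$ replaced by $n-1$; this is the correct place for the induction on $n$, not inside Step~2. (Your inductive remark in Step~2 via ``coordinate projection'' is too vague: $l$-nondegeneracy of $\bfF$ does not pass to an arbitrary coordinate projection.)
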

Applying Corollary \ref{cor lbound dim} to nondegenerate curves with $l=n$, then yields the desired result.

\subsubsection*{Acknowledgements}
R. S. was
supported by the Deutsche 
Forschungsgemeinschaft 
(DFG, German Research Foundation) 
under Germany’s Excellence Strategy 
- EXC-2047/1 
– 390685813 as well as SFB 1060.
N. T. was supported by
a Schr\"{o}dinger Fellowship 
of the Austrian Science Fund (FWF):
project J 4464-N. The authors are grateful to the Mathematics department of the University of G\"ottingen for its hospitality, and the organizers of the workshop “Diophantine Geometry, Groups, and Number Theory” for a productive environment.

\bibliographystyle{amsbracket}
\providecommand{\bysame}{\leavevmode\hbox to3em{\hrulefill}\thinspace}

\newpage

\end{document}